\documentclass[12pt,reqno]{amsart}   	
\usepackage{bm} 
\usepackage{amsfonts}
\usepackage{mathrsfs}
\usepackage{yfonts}
\usepackage{url}

\usepackage[osf]{mathpazo}  

\usepackage{mathtools}

\usepackage[left=2.5cm, top=3cm, bottom=3cm, right=2.5cm]{geometry}
\usepackage{amssymb}  		
\usepackage{graphicx}
\usepackage{verbatim}
\usepackage{enumerate}	
\usepackage[english]{babel}
\usepackage{amsmath}
\usepackage{quoting}
\usepackage{array,booktabs}

\usepackage[shortlabels]{enumitem}
\usepackage{units}
\usepackage{xspace}\xspaceaddexceptions{\,}
\usepackage{multicol}
   \usepackage{tikz-cd}
\quotingsetup{font=small}	
\usepackage{amsthm}								


\usepackage[pdftex,bookmarks,bookmarksnumbered,linktocpage,   
         colorlinks,linkcolor=blue,citecolor=blue]{hyperref}

\theoremstyle{definition}
\newtheorem{definition}{Definition}
\newtheorem{example}[definition]{Example}

\newtheorem{remark}[definition]{Remark}

\theoremstyle{plain}
\newtheorem{lemma}[definition]{Lemma}
\newtheorem{proposition}[definition]{Proposition}
\newtheorem{theorem}[definition]{Theorem}
\newtheorem{corollary}[definition]{Corollary}

%
%

\newcommand\A{{\mathbf A}}
\newcommand\B{{\mathbf B}}
\newcommand\C{{\mathbf C}}

\newcommand\I{{\mathbf I}}

\newcommand\bool{\A_{\infty}}

\newcommand\PL{{\mathcal{P}}_{\l}}
\newcommand\PLA{{\mathcal{P}_{\l } (\mathbb{A})}}

\newcommand\BA{\ensuremath{\mathcal{BA}}\xspace}

\newcommand\IBSL{\ensuremath{\mathcal{IBSL}}\xspace}

\newcommand\N{\mathcal{NGIB}}
\newcommand\inj{\mathcal{IGIB}}

\newcommand\di{{\widehat{d}}}

\newcommand\eq{\thickapprox}

\newcommand{\Var}{\mathnormal{V\mkern-.8\thinmuskip ar}}

\makeatletter
\@namedef{subjclassname@2020}{%
  \textup{2020} Mathematics Subject Classification}
\makeatother

\title[]{Probability over P\l onka sums of Boolean algebras: states, metrics and topology}

\author{Stefano Bonzio}
\address{Stefano Bonzio, Artificial Intelligence Research Institute \\
Spanish National Research Council, Spain.}
\email{stefano@iiia.csic.es}
\author{Andrea Loi}
\address{Andrea Loi, Department of Mathematics and Computer Science \\
         University of Cagliari, Italy.}
         \email{loi@unica.it}

\date{}

\keywords{Involutive bisemilattice; P\l onka sum; Boolean algebra; probability measure; pseudometric. }
\subjclass[2020]{Primary: 60B99. Secondary 06E75.}

\begin{document}

\maketitle

\begin{abstract}
The paper introduces the notion of state for involutive bisemilattices, a variety which plays the role of algebraic counterpart of weak Kleene logics and whose elements are represented as P\l onka sums of Boolean algebras. We investigate the relations between states over an involutive bisemilattice and probability measures over the (Boolean) algebras in the P\l onka sum representation and, the direct limit of these algebras. Moreover, we study the metric completion of involutive bisemilattices, as pseudometric spaces, and the topology induced by the pseudometric. 
\end{abstract}

\section{Introduction}

Probability theory is grounded on the notion of \emph{event}. Events are traditionally interpreted as elements of a ($\sigma$-complete) Boolean algebra. Intrinsically, this means that classical propositional logic is the most suitable formal language to speak about events. The direct consequence of this standard assumption is that any event can either happen (to be the case) or not happen, and, thus, its negation is taking place. One could claim that this criterion does not encompass all situations: certain events might not be either true or false (simply happen or not happen). Think about the coin toss to decide which one, among two tennis players, is choosing whether to serve or respond at the beginning of a match. Although being statistically extremely rare, the coin may fall on the edge, instead of on one face. Pragmatically, the issue is solved re-tossing the coin (hoping to have it landing on one face). Theoretically, one should admit that there are circumstances in which the event ``head'' (and so also its logical negation ``tail'') could be indeterminate (or, undefined). Interestingly enough, not all kinds of events are well modeled by classical logic. This is the case, for instance, of assertions about the properties of quantum systems (which motivated Von Neumann to introduce quantum logic \cite{VonNeumann}, see also \cite{Giuntinibook}), or assertions that can be neither true nor false (like paradoxes), or also propositions regarding vague, or fuzzy properties (like ``begin tall'' or ``being smart'' ).
Yet, we are convinced that adopting (some) non-classical logical formalism to describe certain situations is not a good objection to renounce to measure their probability. On the contrary, we endorse the idea of those who think that it constitutes a good reason to look beyond classical probability, namely to render probability when events under consideration do not belong to classical propositional logic. 


The idea of studying probability maps over algebraic structures connected to non-classical logic formalisms is at the heart of the theory of \emph{states}. A theory that is well developed for different structures involved in the study of fuzzy logics, including MV-algebras \cite{Mundici95, FlaminioKroupa}), G{\"o}del-Dummett \cite{statiGodel}, G{\"o}del$_{\Delta}$ \cite{Valotastates}, the logic of nilpotent minimum \cite{statenilpotentminimum} and product logic \cite{Flaminio2018}. Within the same strand of research, probability maps have been defined and studied also for other algebraic structures (connected to logic), such as Heyting algebras \cite{weatherson2003}, De Morgan algebras \cite{statiKleene}, orthomodular lattices \cite{statiOML} and effect algebras \cite{statieffect}. 

The idea motivating the present work is to further extend the theory of states to non-classical events; in particular, to one of the three-valued logics in the weak Kleene family, whose algebraic semantics is played by the variety of involutive bisemilattices (see \cite{Bonzio16}). The peculiarity of such variety is that each of its members has a  representation in terms of P\l onka sums of Boolean algebras. This abstract construction, originally introduced in universal algebra by J. P\l onka \cite{Plo67,Plo67a}, is performed over direct systems of algebras whose index set is a semilattice. The axiomatisation of states we propose, which is motivated by the logic PWK (Paraconsistent Weak Kleene), allows to ``break'' a state into a family of (finitely additive) probability measures over the Boolean algebras in the P\l onka sum representation of an involutive bisemilattice. In other words, our notion of state accounts for (and is strictly connected to) all the Boolean algebras in the P\l onka sum. 
Moreover, we show that states over an involutive bisemilattice are in bijective correspondence with finitely additive probability measures over the Boolean algebra constructed as the direct limit of the algebras in the (semilattice) direct system of the representation. This allows to prove that each state corresponds to an integral over the dual space of the direct limit (the inverse limit of the dual spaces). 

The results obtained explore, on the one hand, the possibility of defining probability measures over the (non-equivalent) algebraic semantics of certain Kleene logics. On the other, it shows how (finitely additive) probability measures can be lifted from Boolean algebras to the P\l onka sum of Boolean algebras.

The paper is organised as follows. Section \ref{Sec: Preliminari} recaps all the necessary preliminary notions helpful for the reader to go through the entire paper. In Section \ref{Sec: states}, states over involutive bisemilattices are introduced. We show that each (non-trivial) involutive bisemilattice, having no trivial algebra in its P\l onka sum representation, carries at least a state. This class has been introduced (see \cite{Paoliextensions}) for its logical relevance, as algebraic counterpart of an extension of PWK. Moreover, we show the correspondence with probability measures of the direct limit of the Boolean algebras in the semilattice system of $\B$. We dedicate Section \ref{Sec: faithful states} to the analysis of strictly positive states, which we refer to as \emph{faithful states} (in accordance with the nomenclature for MV-algebras). In particular, the presence of a faithful state motivates the introduction of the subclass of \emph{injective} involutive bisemilattices, characterised by injective homomorphisms in the P\l onka sum representation. In Section \ref{Sec: metric and completition} and \ref{Sec: topology}, respectively, we approach involutive bisemilattices carrying a state as pseudometric spaces and topological spaces (with the topology induced by the pseudometric), respectively. In the former we study metric completions, while in the latter we insist on the relation between involutive bisemilattices and the correspondent direct limits, as topological spaces. We close the paper with Section \ref{sec: conclusioni}, introducing possible further works and two Appendixes, discussing, respectively, an alternative notion of state (and motivating why we discard it) and some details of an unsolved problem stated in Section \ref{Sec: faithful states}.

\section{Preliminaries}\label{Sec: Preliminari}

\subsection{P\l onka sums}

A \textit{semilattice} is an algebra $\I = \langle I, \lor\rangle$ of type $\langle 2\rangle$, where $\lor$ is a binary commutative, associative and idempotent operation. Given a semilattice $\I$, it is possible to define a partial order relation between the elements of its universe, as follows
\[
i \leq j \Longleftrightarrow i \lor j = j,
\]
for each $i,j\in I$. We say that a semilattice has a \emph{least element}, if there exists an element $i_0\in I$ such that $i_{0}\leq i$ (equivalently, $i_0\vee i = i$), for all $i\in I$.

\begin{definition}\label{Def:Direct Systems of algebras}
A \textit{semilattice direct system of algebras} is a triple $\mathbb{A}=\langle \{\A_{i}\}_{i\in I}, I, p_{ij} \rangle$ consisting of
\begin{enumerate}
\item a semilattice\footnote{With a slight abuse of notation, we identify semilattice $\mathbf{I}$ with its universe $I$.} $I = \langle I, \lor\rangle$ with least element $i_{0}$;
\item a family of algebras $\{ \A_{i}\}_{i \in I}$ of the same type with disjoint universes;
\item a homomorphism $p_{ij} \colon \A_{i} \to \A_{j}$, for every $i, j \in I$ such that $i \leq j$,
\end{enumerate}
where $\leq$ is the order induced by the binary operation $\vee$.

\noindent 
Moreover, $p_{ii}$ is the identity map for every $i \in I$, and if $i \leq j \leq k$, then $p_{ik} = p_{jk} \circ p_{ij}$.
\end{definition}

Organising a family of algebras $\{\A_i\}_{i\in I}$ into a semilattice direct system means, substantially, requiring that the index set $I$ forms a semilattice and that algebras whose indexes are comparable with respect to the order are ``connected'' by homomorphisms, whose ``direction'' is bottom up, namely from algebras whose index is lower to algebras with a greater index. 
The nomenclature in Definition \ref{Def:Direct Systems of algebras} is deliberately chosen to emphasise the presence of an index set equipped with the structure of \emph{semilattice}\footnote{Systems of this kind are special cases of \emph{direct systems} (of algebras), which differentiate with respect to the index set which is assumed to be a directed preorder.}. We will often refer to a semilattice direct system simply as $\{\A_{i}\}_{i\in I} $ (instead of $\mathbb{A}=\langle \{\A_{i}\}_{i\in I}, I, p_{ij} \rangle$) and, in order to indicate homomorphisms, we sometimes write $p_{i,j}$ instead of $p_{ij}$ (in situations where confusion may arise).

The P\l onka sum is a new algebra that is defined given a semilattice direct systems of algebras. 

\begin{definition}\label{def: Plonka sum of algebras}
Let $\mathbb{A}=\langle \{\A_{i}\}_{i\in I}, I, p_{ij} \rangle$ be a semilattice direct system of algebras of type $\tau$. The \textit{P\l onka sum} over $\mathbb{A}$, in symbols $\PL(\mathbb{A})$ or $\PL(\A_{i})_{i \in I}$, is the algebra such that
\begin{enumerate}
\item the universe of $\PL(\mathbb{A})$ is the disjoint union $\displaystyle{\bigsqcup_{i \in I}}A_{i}$;

\item for every $n$-ary basic operation $f$ (with $n \geqslant 1 $) in $\tau$, and $a_{1}, \dots, a_{n} \in \bigcup_{i \in I}A_{i}$, we set
\[
f^{\PL(\A_{i})_{i \in I}}(a_{1}, \dots, a_{n}) \coloneqq f^{\A_{j}}(p_{i_{1} j}(a_{1}), \dots, p_{i_{n} j}(a_{n}))
\]
where $a_{1} \in A_{i_{1}}, \dots, a_{n} \in A_{i_{n}}$ and $j = i_{1} \lor \dots \lor i_{n}$;\ 

for every $0$-ary operation $c$ (constant) in $\tau$, 
\[
c^{\PL(\A_{i})_{i \in I}} \coloneqq c^{\A_{i_0}}, 
\]
where $i_{0}$ is the least element in $I$.

\end{enumerate}	 
\end{definition}


In words, non-nullary operations $f^{\PLA}$ on the elements $a_1,\dots, a_n$ of the P\l onka sum are defined by computing the operation $f$ in the algebra $\A_j$, whose index is the join of the indexes (a notion that is well defined since the index set $I$ is a semilattice) corresponding to the algebras where the elements $a_1,\dots,a_n$ live, respectively (this idea is clarified through Example \ref{ex: operazioni Plonka}). The constants of the P\l onka sum coincide with the constants of the algebra $\A_{i_0}$ (whose index set is the least element in $I$).

The theory of P\l onka sums is intrinsically connected with the notion of partition function which we recall in the following. 

\begin{definition}\label{def: partition function}
Let $\A$ be an algebra of type $\tau$. A function $\cdot\colon A^2\to A$ is a \emph{partition function} in $\A$ if the following conditions are satisfied for all $a,b,c\in A$, $ a_1 , ..., a_n\in A^{n} $ and for any operation $g\in\tau$ of arity $n\geqslant 1$, and $c\in\tau$ of arity $n=0$.
\begin{enumerate}
\item[(PF1)] $a\cdot a = a$,
\item[(PF2)] $a\cdot (b\cdot c) = (a\cdot b) \cdot c $,
\item[(PF3)] $a\cdot (b\cdot c) = a\cdot (c\cdot b)$,
\item[(PF4)] $g(a_1,\dots,a_n)\cdot b = g(a_1\cdot b,\dots, a_n\cdot b)$,
\item[(PF5)] $b\cdot g(a_1,\dots,a_n) = b\cdot a_{1}\cdot_{\dots}\cdot a_n $, 
\item[(PF6)] $a\cdot c = a$.
\end{enumerate}
\end{definition} 

A comment on the above definition is in order. The upshot of (PF1)-(PF3) is that $\langle A,\cdot\rangle$ is a \emph{left normal band}. Left normal bands (see e.g. \cite[Ch. 2]{Petrich} or \cite[Sec. 4.4-4.6]{Howie}) are important classes of algebras in semigroup theory. (PF4)-(PF6) demand some compatibility between $\cdot$ and the operations in the type $\tau$ (which may include constants). It is useful to recall that the above definition uses the minimal number of conditions, in case the type $\tau$ admits also constants\footnote{The construction of the P\l onka sum and the notion of partition function were originally introduced by P\l onka in \cite{Plo67a,Plo67} considering only types without constants and, subsequently, extended to types with constants \cite{plonka1984sum}.}; it is not difficult to check that the above introduced definition is equivalent to the one in \cite{plonka1984sum} (see also \cite[Section 6]{Romanowska92}).

The connection between partition functions and P\l onka sums is provided by the following result, which states that algebras possessing a (term-definable) partition function can be decomposed as P\l onka sums.

\begin{theorem}\cite[Thm.~II]{plonka1984sum}\label{th: Teorema di Plonka}
Let $\A$ be an algebra of type $\tau$ with a partition function $\cdot$. The following conditions hold: 
\begin{enumerate}
\item $A$ can be partitioned into $\{ A_{i} \}_{i \in I }$ where any two elements $a, b \in A$ belong to the same component $A_{i}$ exactly when
\[
a= a\cdot b \text{ and }b = b\cdot a.
\]
\item The relation $\leq$ on $I$ given by the rule
\[
i \leq j \Longleftrightarrow \text{ there exist }a \in A_{i}, b \in A_{j} \text{ s.t. } b\cdot a =b
\]
is a partial order and $\langle I, \leq \rangle$ is a semilattice with least element.\footnote{With a slight abuse of notation we indicate here the semilattice of indexes $I$ by the order $\leq$ and not by the binary operation $\vee$.}
\item For all $i,j\in I$ such that $i\leq j$ and $b \in A_{j}$, the map $p_{ij} \colon \A_{i}\to \A_{j}$, defined by the rule $p_{ij}(x)= x\cdot b$ is a homomorphism. 
\item $\mathbb{A} = \langle \{ \A_{i} \}_{i \in I}, \langle I, \leq \rangle, \{ p_{ij} \! : \! i \leq j \}\rangle$ is a direct system of algebras such that $\PL(\mathbb{A})=\A$.
\end{enumerate}
\end{theorem}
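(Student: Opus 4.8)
The plan is to work entirely inside the left normal band $\langle A, \cdot\rangle$ provided by (PF1)--(PF3), first fixing the partition and the order and only afterwards checking that the basic operations reconstruct correctly. For item (1) I would define $a \sim b$ to mean $a = a\cdot b$ and $b = b\cdot a$ and verify this is an equivalence relation: reflexivity is immediate from (PF1), symmetry is built into the definition, and transitivity follows from associativity, since if $a = a\cdot b$, $b = b\cdot c$ then $a\cdot c = (a\cdot b)\cdot c = a\cdot(b\cdot c) = a\cdot b = a$, and symmetrically $c\cdot a = c$. The blocks $\{A_i\}_{i\in I}$ are then the equivalence classes.

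For item (2) the crux is that the defining condition of $\leq$ does not depend on the chosen representatives. I would first prove the auxiliary fact that $b\cdot a = b$ holds for \emph{some} $a\in A_i$, $b\in A_j$ exactly when it holds for \emph{all} such pairs, using (PF2) and the commutation law $x\cdot y\cdot z = x\cdot z\cdot y$ that follows from (PF2)--(PF3). Granted representative-independence, reflexivity of $\leq$ is (PF1), transitivity is associativity (combined with representative-independence), and antisymmetry reduces exactly to the definition of $\sim$ from item~(1). For the semilattice structure I would show that for $a\in A_i$, $b\in A_j$ the element $a\cdot b$ always lies in a single block, whose index I declare to be $i\vee j$, and then check that this block is the least upper bound of $i$ and $j$: the inequalities $i,j\le [a\cdot b]$ come from $(a\cdot b)\cdot a = a\cdot b$ (via the commutation law and idempotency) and $(a\cdot b)\cdot b = a\cdot b$, while minimality uses associativity. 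The least element is the block of any constant $c$, since (PF6) gives $b\cdot c = b$ for every $b$, that is $[c]\le j$ for all $j$.

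For item (3) I would fix $i\le j$ and $b\in A_j$ and check that $p_{ij}(x)=x\cdot b$ sends $A_i$ into $A_j$ (again by representative-independence) and respects every operation: for an $n$-ary $g$, (PF4) gives directly $g(x_1,\dots,x_n)\cdot b = g(x_1\cdot b,\dots,x_n\cdot b)$, which is the homomorphism condition once one observes both sides are computed in $\A_j$, and the constant case follows from associativity under the convention $c^{\A_i}=p_{i_0 i}(c^{\A})$. For item (4), $p_{ii}=\mathrm{id}$ is immediate from $\sim$, and $p_{jk}\circ p_{ij}=p_{ik}$ follows from associativity once one knows $b\cdot c\in A_k$ for $b\in A_j$, $c\in A_k$. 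The remaining reconstruction $\PL(\mathbb{A})=\A$ amounts to showing, for each $n$-ary $f$ and $a_\ell\in A_{i_\ell}$ with $w=a_1\cdots a_n$ and $j=i_1\vee\cdots\vee i_n=[w]$, that $f^{\A}(a_1,\dots,a_n)=f^{\A_j}(p_{i_1 j}(a_1),\dots,p_{i_n j}(a_n))$. I would obtain this by applying (PF5) with left argument $f^{\A}(a_1,\dots,a_n)$ together with (PF1) to get $f^{\A}(a_1,\dots,a_n)=f^{\A}(a_1,\dots,a_n)\cdot w$, then (PF4) to distribute the right factor, yielding $f^{\A}(a_1,\dots,a_n)=f^{\A_j}(a_1\cdot w,\dots,a_n\cdot w)$; since $w\in A_j$ and $a_\ell\cdot w = p_{i_\ell j}(a_\ell)$, this is exactly the required identity (and it simultaneously confirms $f^{\A}(a_1,\dots,a_n)\in A_j$, so each $\A_j$ is closed). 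The nullary case matches because $[c]=i_0$ and $\PL(\mathbb{A})$ takes its constant in $\A_{i_0}$.

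The main obstacle I expect is item~(2): proving that $\leq$ is representative-independent and that $a\cdot b$ determines a genuine join. Everything else is a fairly mechanical translation of (PF4)--(PF6), but these facts are precisely where the left-normal-band identities (PF2)--(PF3) must be combined nontrivially, and they are the hinge on which the well-definedness of the maps $p_{ij}$ and of the entire direct system rests.
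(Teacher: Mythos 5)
A preliminary point of reference: the paper itself contains no proof of this theorem --- it is quoted, with citation, from P\l onka's paper \cite{plonka1984sum} --- so your argument can only be judged on its own terms. On those terms, the outline is the standard one and most of it checks out: the verification that $\sim$ is an equivalence relation, the representative-independence of $\leq$ and its order properties, the identification of $i\vee j$ with the block of $a\cdot b$ (upper bound from left normality and (PF1), minimality from (PF2)), the least element from (PF6), and the functoriality $p_{ii}=\mathrm{id}$, $p_{jk}\circ p_{ij}=p_{ik}$ all go through as you describe.

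The genuine gap is the parenthetical claim at the end of your item (4), that the identity $f^{\A}(a_1,\dots,a_n)=f^{\A}(a_1\cdot w,\dots,a_n\cdot w)$ ``simultaneously confirms $f^{\A}(a_1,\dots,a_n)\in A_j$, so each $\A_j$ is closed''. As written this is circular: rewriting the right-hand side as $f^{\A_j}(\dots)$ already presupposes that $A_j$ is closed under $f^{\A}$, and closure is needed even earlier, in item (3), where the homomorphism condition treats $g^{\A_i}$ and $g^{\A_j}$ as bona fide operations of the blocks. Moreover, the identity you prove yields only half of what membership requires: writing $m$ for the index of the block containing $f^{\A}(a_1,\dots,a_n)$, the equation $f^{\A}(a_1,\dots,a_n)\cdot w=f^{\A}(a_1,\dots,a_n)$ says exactly (by your own item (2)) that $m\vee j=m$, i.e.\ $j\leq m$; nothing so far forces $m\leq j$. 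The missing step is the other half of the $\sim$-relation: apply (PF5) once more, now with left argument $w$, to get $w\cdot f^{\A}(a_1,\dots,a_n)=w\cdot a_1\cdot\ldots\cdot a_n$, and use (PF1)--(PF3) to collapse the repeated factors, so that the right-hand side equals $w$; this gives $m\vee j=j$, hence $m=j$ and $f^{\A}(a_1,\dots,a_n)\sim w\in A_j$. With this supplement, closure of each block follows (take all $a_\ell$ in one block), the notation $f^{\A_j}$ becomes legitimate, and your computations for items (3) and (4) go through. You should also actually carry out the check, which you flag but defer, that $x\cdot b$ does not depend on the choice of $b\in A_j$ (it follows from (PF2)--(PF3), the relation $b=b\cdot b'$, and the fact that $x\cdot b'\in A_j$): without it, the maps $p_{ij}$ of the direct system in item (4) are not well defined.
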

Theorem \ref{th: Teorema di Plonka} is enunciated in the general form which allows the presence of constants in the type $\tau$. 
The following example may be useful for the reader to understand how the construction of P\l onka sum is performed over a semilattice direct system of Boolean algebras (and the relation with the partition function).

\begin{example}\label{ex: operazioni Plonka}
Consider the four-elements semilattice $I=\{i_{0},i,j,k\}$ whose order is given as follows:
\[
I = \begin{tikzcd}[row sep = tiny, arrows = {dash}]
 & k & \\ i \arrow[ur, dash] & & j \arrow[ul] \\ & i_{0}\arrow[ul]\arrow[ur] &
  \end{tikzcd}
  \]

Consider the family $\{\A_{i_{0}}, \A_{i}, \A_{j}, \A_{k}\}$ of Boolean algebras, organised into a semilattice direct system, whose index is $I$ and homomorphisms are defined as extensions of the following maps: $p_{ik}(a) = c$, $p_{jk}(b) = e$ (thus, $p_{ik}(a') = c'$, $p_{jk}(b') = e'$ ), $p_{i_{0}m}$ is defined in the unique obvious way, for any $m\in\{i,j,k\}$.

\[
\A_{k} = \begin{tikzcd}[row sep = tiny, arrows = {dash}]
& & 1_{k}& \\
e'\arrow[urr] &&& \\
 &&&c'\arrow[uul] \\
& d\arrow[urr]\arrow[uul] & &  \\
& & d'\arrow[uuuu] & \\
 c\arrow[uuuu]\arrow[urr] & & &  \\ 
 & &  & e\arrow[uuuu]\arrow[uul] \\ 
 & 0_{k}\arrow[uuuu] \arrow[uul]\arrow[urr]& &
  \end{tikzcd}
\]

\[
\A_i = \begin{tikzcd}[row sep = tiny, arrows = {dash}]
 & 1_i & \\ a \arrow[ur, dash] & & a' \arrow[ul] \\ & 0_{i}\arrow[ul]\arrow[ur] &
  \end{tikzcd}
  \qquad  \qquad
  \A_j = \begin{tikzcd}[row sep = tiny, arrows = {dash}]
 & 1_j & \\ b \arrow[ur, dash] & & b' \arrow[ul] \\ & 0_j\arrow[ul]\arrow[ur] &
  \end{tikzcd}
\]
\[
  \A_{i_{0}} = \begin{tikzcd}[row sep = tiny, arrows = {dash}]
 & 1 & \\
   & &  \\
    & 0\arrow[uu] &
  \end{tikzcd}
\]

According to Definition \ref{def: Plonka sum of algebras}, the P\l onka sum over the above introduced semilattice direct system is the new algebra $\B$, whose universe is $B=A_{i_{0}}\sqcup A_{i}\sqcup A_{j}\sqcup A_{k}$. The constants in $\B$ are the constants of the Boolean algebra $\A_{i_{0}}$.
We just give an example of how binary operations are computed in $\B$ (as should be clear that $'$ coincides with negation in each respective algebra). 

\[
a \wedge^{\B} a' = p_{ii}(a) \wedge^{\A_{i}} p_{ii} (a') = a \wedge^{\A_{i}} a' = 0_{i}. \]
In words, a binary operation between elements belonging to the same algebra (e.g. $\A_{i}$) is computed as in that algebra. As for binary operations between elements ``living'' in (the universes of) two different algebras (e.g. $\A_{i},\A_{j}$) it is necessary to recur to the algebra $\A_{k}$ (since $k= i\vee j$) and homomorphisms in the system:
\[
a \vee^{\B} b = p_{ik}(a) \vee^{\A_{k}} p_{jk} (b) = c \vee^{\A_{k}} e = d'. \]

The algebra $\B$ just introduced is an example of involutive bisemilattice (see Definition \ref{def: involutive bisemilattices}) and may be helpful to clarify also the notion of partition function and the content of Theorem \ref{th: Teorema di Plonka}.\footnote{We thank an anonymous reviewer for urging the need for a clarification.} Observe that $\B$ does not satisfy absorption. Indeed, $a\wedge^{\B}(a\vee^{\B} b) = p_{ik}(a)\wedge^{\A_{k}}(p_{ik}(a)\vee^{\A_{k}}p_{jk}(b)) = c\neq a$. However, all the algebras $\{\A_{i_{0}}, \A_{i}, \A_{j}, \A_{k}\}$ do satisfy absorption (they are Boolean algebras). It is indeed easy to verify that the term function $x\cdot y\coloneqq x\wedge(x\vee y)$ is a partition function on $\B$ (it does satisfy all the properties in Definition \ref{def: partition function}). In order to clarify the content of Theorem \ref{th: Teorema di Plonka}, consider $\B=\langle B, \vee,\wedge,^{'},0,1\rangle$ as an algebra of type $(2,2,1,0,0)$, and the term operation $x\cdot y\coloneqq x\wedge(x\vee y)$ playing the role of partition function. Then $\B$ can be partitioned into (four) components each of which satisfies $x\cdot y \approx x$, namely absorption: these components clearly are $\A_{i_{0}}, \A_{i}, \A_{j}, \A_{k}$. Moreover, the set of indexes $\{i_{0},i,j,k\}$ can be equipped with the structure of semilattice, according to condition (2) in Theorem \ref{th: Teorema di Plonka}. For instance, $i_{0}\leq i$ as $1_{i}\cdot 1 = 1_{i}\wedge^{\B}(1_{i}\vee^{\B}1) = 1_{i}$; $i\leq k$ since $1_{k}\cdot 1_{i} = 1_{k}\wedge^{\B}(1_{k}\vee^{\B}1_{i}) = 1_{k}$. Condition (3) guarantees the presence of homomorphisms between the (Boolean) algebras $\{\A_{i_{0}}, \A_{i}, \A_{j}, \A_{k}\}$. For instance, $p_{ik} \colon \A_{i}\to \A_{k}$ is defined by the rule $p_{ik}(x)= x\cdot y$ (for some $y\in A_{k}$). For instance, $p_{ik}(a) =a\cdot 1_{k} = c\wedge(c\vee 1_{k}) = c $. It is not difficult to check that the definition of $p_{ik}$ does not depend on the choice of a specific element in the algebra $\A_{k}$.  

\end{example}

\subsection{Involutive Bisemilattices}

\begin{definition}\label{def: involutive bisemilattices}
An \emph{involutive bisemilattice} is an algebra $\B = \langle B,\land,\lor,^{'},0,1\rangle$ of type $(2,2,1,0,0)$ satisfying:
\begin{enumerate}[label=\textbf{I\arabic*}.]
\item $x\lor x\eq x$;
\item $x\lor y\eq y\lor x$;
\item $x\lor(y\lor z)\eq(x\lor y)\lor z$;
\item $ (x')'\eq x$;
\item $x\land y\eq( x'\lor y')'$;
\item $x\land(x'\lor y)\eq x\land y$; \label{rmp}
\item $0\lor x\eq x$;
\item $1\eq 0'$.
\end{enumerate}

\end{definition}

It follows from the definition, that the $\vee$-reduct of an involutive bisemilattice is a (join) semilattice and, consequently, the binary operation $\vee$ induces a partial order $\leq_{\vee}$. Similarly, in virtue of \textbf{I5}, also the $\wedge-$reduct is a (meet) semilattice, inducing a partial order $\leq_{\wedge}$. In general, the two orders are different\footnote{It is easy to check that $x\leq_{\vee} y$ if and only if $y'\leq_{\wedge}x'$ (see \cite{Bonzio16}).} and are not lattice-orders: they are (and actually coincide) in case an involutive bisemilattice is a Boolean algebra.
The class of involutive bisemilattices forms a variety which we denote by $\IBSL$, which satisfies all (and only) the regular identities\footnote{An identity $ \varphi \approx \psi $ is \emph{regular} provided that $\Var(\varphi) = \Var(\psi)$.} holding for Boolean algebras. Thus, for instance, it fails to satisfy absorption (see \cite{Bonzio16} for details). The importance of involutive bisemilattice is connected to logic, as (one of its subquasivarieties) plays the role of algebraic semantics of Paraconsistent Weak Kleene logic (see \cite{Bonzio16}).

Involutive bisemilattices are connected to P\l onka sums. In particular, every involutive bisemilattice is the P\l onka sum over a semilattice direct system of Boolean algebras and, conversely, the P\l onka sum over any semilattice direct system of Boolean algebras is an involutive bisemilattice (see \cite{Bonzio16, Romanowska92}). It follows that the P\l onka sum introduced in Example \ref{ex: operazioni Plonka} is an example of involutive bisemilattice. Some details of the representation theorem are exemplified in the following (for further details, we refer the reader to \cite{Bonzio16}).

\begin{example}\label{ex: nuovo esempio}
Let $\B\in\IBSL$, with $B=\{0,1,a,a',b,b'\}$, whose order $\leq_{\vee} $ is represented in the following Hasse diagram.

\[
\begin{tikzcd}[row sep = tiny, arrows = {dash}]
& & & b \\
&&& \\
& & & \\
& & & \\ 
 & 1\arrow[uuuurr] & & b'\arrow[uuuu]   \\
 & & &  \\
  a \arrow[uur, dash] & & a' \arrow[uul]\arrow[uur] & \\
  & & &  \\
   & 0\arrow[uul]\arrow[uur] & & 
  \end{tikzcd}
\]

\noindent
Define $x\cdot y\coloneqq x\wedge(x\vee y)$. Observe that $\B$ does not satisfy absorption, indeed: $a\cdot b = a\wedge (a\vee b) = a\wedge b = (a'\vee b')' = (b')' = b $. It is not difficult to check that $\cdot $ is a partition function on $\B$. By Theorem \ref{th: Teorema di Plonka}-(1), $B$ can be partitioned in two components: $A_{i_{0}} = \{0,a,a',1\}$, $A_{j} = \{b,b'\}$. Indeed (just to show one case), $a\cdot a' = a\wedge (a\vee a') = a\wedge 1 = (a'\vee 0)' = (a')' = a$ (and, similarly, $a'\cdot a = a'$). Condition (2) in Theorem \ref{th: Teorema di Plonka} implies that the semilattice of indexes is indeed the two-elements chain $\{i_{0}, j\}$ with $i_{0} <\ j$. Indeed, for instance, $b\cdot a = b\wedge (b\vee a) = b\wedge b = b$. Condition (3) fixes the (unique) homomorphism $p_{i_{0}j}\colon \A_{i_{0}}\to\A_{j}$. To exemplify just one case: $p_{i_{0}j}(a) = a\cdot b = b$ (it is immediate to check that $p_{i_{0}j}(1) = b$, $p_{i_{0}j}(0) = p_{i_{0}j}(a') = b'$). The elements $b$ and $b'$ are the top and bottom element, respectively, of the Boolean algebra $\A_{j}$. This illustrates the P\l onka sum representation of $\B$.

\end{example}

\begin{remark}
Throughout the whole paper, when considering $\B\in\IBSL$ we will always identify it with its P\l onka sum representation $\PL(\A_{i})$, without explicitly mentioning it.
\end{remark}

\noindent
\textbf{Notation:} we denote by $1_{i}$, $0_i$ the top and bottom element, respectively, of the Boolean algebras $\A_i $ (in the P\l onka sum representation of $\B\in\IBSL$).

\subsection{Finitely additive probability measures over Boolean algebras}

Let $\A$ be a Boolean algebra. A finitely additive probability measure over $\A$ is a real-valued map $m\colon\A\to [0,1]$ such that: 
\begin{enumerate}
\item $m(1) = 1$; 
\item $m(a\vee b ) = m(a) + m(b)$, if $a\wedge b = 0$. 
\end{enumerate}

With a slight abuse of notation here, and elsewhere, we use the same symbol (``$1$'') to denote both the top element of a Boolean algebra and the unit of $\mathbb{R}$. A probability measure $m$ over a Boolean algebra $\A$ is called \emph{regular} (or, strictly positive) provided that $m(a) > 0$ for any $a\neq 0$.
\\
\noindent
For our purposes, it is useful to recall a relevant result which connects probability maps over Boolean algebras to probability measures over an appropriate topological space. This result (proved, independently, by Kroupa \cite{Kroupa2006} and Panti \cite{Panti}), holds, more in general, for MV-algebras.\footnote{We opted not to recall basic facts about MV-algebras as they are unnecessary to go through the paper. Standard references are \cite{Mundicibook} or \cite{DiNola}.} For the convenience of our reader, we opt to formulate it for Boolean algebras, recalling that every Boolean algebra is a semisimple MV-algebra (where the operations $\oplus$ and $\odot$ coincide with $ \vee$ and $ \wedge$, respectively). The fact that a Boolean algebra $\A$ is semisimple, as MV-algebra, allows to represent it as an algebra of $[0,1]$-valued continuous functions defined over its (dual) Stone space $\A^{\ast}$ (see \cite[Theorem 2.1.7]{FlaminioKroupa}). It follows that one can associate to each element $a\in A$, an unique continuous function $a^{\ast}\colon\A^{\ast}\to [0,1]$. 
Moreover, we refer to the space of all the (finitely additive) probability measures over a Boolean algebra $\A$ as $\mathcal{S}(\A)$. 

\begin{theorem}\label{th: rappresentazione integrale misure di probabilita}
Let $\A$ be a Boolean algebra and $m\colon\A\to [0,1]$ a (finitely additive) probability measure. Then
\begin{enumerate}
\item There is a homeomorphism $\Psi\colon \mathcal{S}(\A)\to\mathcal{M}(\A^{\ast})$, where $\mathcal{M}(\A^{\ast})$ is the space of all regular\footnote{Notice that this use of the term ``regular'' is different from above and refers to Borel measures over topological spaces (see, for instance, \cite{FlaminioKroupa}).} Borel probability measures on its dual space $\A^{\ast}$.
\item For every $a\in A $, 
\[ 
m(a) = \int_{\A^{\ast}} a^{\ast}(M) \; d\mu_{s}(M), 
\]
where $a^{\ast}$ is the unique function associated to $a$, $M\in \A^{\ast}$ and $d\mu_{s}= \Psi(s) $. 
\end{enumerate}
\end{theorem}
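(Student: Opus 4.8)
The plan is to reduce Theorem \ref{th: rappresentazione integrale misure di probabilita} to the cited representation result for semisimple MV-algebras, specialising it to the Boolean case, rather than reproving the integral representation from scratch. First I would invoke the fact recalled just before the statement, namely that every Boolean algebra $\A$ is a semisimple MV-algebra (with $\oplus,\odot$ given by $\vee,\wedge$). This places $\A$ inside the scope of the Kroupa--Panti theorem \cite{Kroupa2006,Panti}, whose general form asserts, for any semisimple MV-algebra, a homeomorphism between the state space and the space of regular Borel probability measures on the maximal spectral (dual) space, together with the integral formula $m(a)=\int a^{\ast}\,d\mu_{s}$. Since every finitely additive probability measure on a Boolean algebra is precisely a state in the MV-algebraic sense (the two defining conditions $m(1)=1$ and finite additivity on disjoint elements coincide with the MV-state axioms once $\oplus$ restricts to $\vee$ on orthogonal elements), the space $\mathcal{S}(\A)$ coincides with the MV-state space, and the theorem follows immediately.

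Concretely, I would carry out the argument in the following steps. First, I would fix the dual space $\A^{\ast}$ as the Stone space of $\A$, which is exactly the maximal spectral space of $\A$ viewed as an MV-algebra, and recall via \cite[Theorem 2.1.7]{FlaminioKroupa} the semisimple representation sending each $a\in A$ to its continuous avatar $a^{\ast}\colon\A^{\ast}\to[0,1]$; because $\A$ is Boolean, each $a^{\ast}$ is in fact $\{0,1\}$-valued, i.e.\ the indicator of the clopen set corresponding to $a$ under Stone duality. Second, I would verify that the MV-state axioms and the finite-additivity axioms define the same set of maps, so that $\mathcal{S}(\A)$ is literally the MV-state space of $\A$. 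Third, I would apply the Kroupa--Panti homeomorphism to obtain $\Psi\colon\mathcal{S}(\A)\to\mathcal{M}(\A^{\ast})$ and the accompanying integral representation, which gives part (2) directly with $d\mu_{s}=\Psi(s)$.

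The only point requiring genuine care, and what I expect to be the main (though modest) obstacle, is checking the compatibility of the two notions of measure/state and ensuring the dual spaces are correctly identified: one must confirm that the Stone dual of $\A$ is the same topological space that the MV-algebraic theorem uses, and that a regular Borel probability measure on the totally disconnected compact Hausdorff space $\A^{\ast}$ restricts, via $a\mapsto\mu(\{M:a^{\ast}(M)=1\})$, to a genuine finitely additive probability measure on $\A$. Since $\A^{\ast}$ is a Stone space, the clopen sets generate the Borel $\sigma$-algebra and the integral $\int_{\A^{\ast}}a^{\ast}\,d\mu_{s}$ collapses to $\mu_{s}(\{M:a^{\ast}(M)=1\})$, so the integral formula and the additivity translation are transparent; the homeomorphism property then follows from the weak-$\ast$ topology being preserved under this identification. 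I do not anticipate needing any new computation beyond unwinding these identifications, so the proof is essentially a citation accompanied by the Boolean specialisation.
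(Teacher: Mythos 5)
Your proposal is correct and follows exactly the route the paper takes: the paper does not prove this theorem at all, but recalls it as a known result, citing \cite[Theorem 4.0.1]{FlaminioKroupa} and justifying its applicability by the observation (stated just before the theorem) that every Boolean algebra is a semisimple MV-algebra, so that the Kroupa--Panti representation specialises directly. Your additional unwinding of the identifications (MV-states versus finitely additive measures, the Stone space as the maximal spectral space, $a^{\ast}$ as indicator functions of clopen sets) is sound and simply makes explicit what the paper leaves to the citation.
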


A standard reference for the above theorem is \cite[Theorem 4.0.1]{FlaminioKroupa} (where it is stated and proved for MV-algebras).

\subsection{Booleanisation of an involutive bisemilattice}\label{subsec: Booleanisation}

Given a semilattice direct system of algebras, the P\l onka sum is only one way to construct a new algebra. Another is the \emph{direct limit}. We recall this construction in the special case of Boolean algebras (we refer the reader interested into the details to \cite{haimo}).

Consider an involutive bisemilattice $\B\cong\PL(\A_{i})$. The \emph{direct limit} over a direct system $\{\A_{i}\}_{i\in I}$ of Boolean algebras is the Boolean algebra defined as the quotient: 
\[
\lim_{\to_{i\in I}}\A_i\coloneqq(\bigsqcup_{i\in I}\A_{i})/\sim,
\]

where, for $a\in A_{i}$ and $b\in A_{j}$, $a\sim b$ if and only if there exist $c\in A_{k}$, for some $k\in I$ with $i,j\leq k$, such that $p_{ik}(a) = c = p_{jk}(b)$. It is immediate to check that $\sim$ is a congruence on the (disjoint) union, thus, operations on the direct limit are defined as usual for quotient algebras.

 It is always possible to associate to $\B\in\IBSL$, the Boolean algebra $\displaystyle\lim_{\to}\A_i$, the direct limit of the algebras (in the system) $\{\A_i\}_{i\in I}$, which we will call the \emph{Booleanisation}\footnote{It is useful to recall that the present use of the term ``Booleanisation'' differs from other usages in literature, in lattice theory (see \cite{BORLIDO}) and in the theory of Boolean (inverse) semigroups (see \cite{Booleansemigroups}).} of $\B$, and we will indicate it by $\A_{\infty}$. 
Given an involutive bisemilattice $\B$, we can define the map $\pi\colon\B\to \A_{\infty}$, as $ \pi(a)\coloneqq [a]_{\sim} $, for any $a\in B$.

\begin{remark}
The map $\pi$ is a surjective homomorphism. We provide the details of one binary operation only (the meet $\land$). Suppose $a,b\in B$ with $a\in A_{i}$, $b\in A_{j}$ and $k=i\vee j$, then: 
\[
\pi(a\wedge b)= [a\wedge b]_{\sim}=[p_{ik}(a)\wedge p_{jk}(b)]_{\sim}=[\;p_{ik}(a)]_{\sim}\wedge [\;p_{jk}(b)]_{\sim} = \pi(a)\wedge\pi (b), 
\]
where the second last equality is justified by observing that $c\in [p_{ik}(a)\wedge p_{jk}(b)]_{\sim} $ (for an arbitrary $c\in A_l$ and $m=k\vee l$) if and only if $p_{lm}(c)= p_{km}(p_{ik}(a)\wedge p_{jk}(b)) = p_{km}(p_{ik}(a))\wedge p_{km}(p_{jk}(b)) = p_{im}(a)\wedge p_{jm}(b)$ and this is equivalent to say $c\in [\;p_{ik}(a)]_{\sim}\wedge [\;p_{jk}(b)]_{\sim} $.
\end{remark}

\noindent
\textbf{Notation:} to indicate elements of the Booleanisation we sometimes drop the subscript $_{\sim}$ when no danger of confusion may arise.

\section{States over Involutive Bisemilattices}\label{Sec: states}


\begin{definition}\label{def: stato su IBSL}
Let $\B$ be an involutive bisemilattice. A \emph{state} over $\B$ is a map $s\colon\B\to[0,1]$ such that:
\begin{enumerate}
\item $s(1) = 1$; 
\item $s(a\lor b)= s(a)+s(b)$, provided that $a\land b \in \bigcup_{i\in I}\{0_i\}$.
\end{enumerate}
\end{definition}

Moreover, a state $s\colon\B\to[0,1]$ is \emph{faithful} if $s(a)> 0$, for every $a\neq 0$.

The following resumes the basic properties of a state over an involutive bisemilattice.

\begin{proposition}\label{prop: basic properties}
Let $s$ be a state over an involutive bisemilattice $\B$. Then
\begin{enumerate}
\item $s(0)=0$;
\item $s(1_i)=1 $ and $s(0_i)=0$, for every $i\in I$. 
\item $s(a') = 1- s(a)$, for every $a\in B$.
\end{enumerate}

\end{proposition}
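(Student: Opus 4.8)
The plan is to derive all three items directly from the two defining axioms of a state (Definition \ref{def: stato su IBSL}), exploiting the structure of the Płonka sum. The key observations to keep in mind are that in the Płonka sum representation, computing $a \vee b$ when $a, b$ live in the \emph{same} component $\A_i$ reduces to the Boolean operation inside $\A_i$, and that the global constants $0, 1$ coincide with $0_{i_0}, 1_{i_0}$ of the least component $\A_{i_0}$. I would also use freely the fact that in each Boolean algebra $\A_i$ we have $a \wedge a' = 0_i$ and $a \vee a' = 1_i$, and that the disjointness condition $a \wedge b \in \bigcup_{i \in I}\{0_i\}$ in axiom (2) is exactly what is needed to apply additivity.

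First I would prove item (1). Taking $a = b = 0$ (so that $a, b \in A_{i_0}$ and $a \wedge b = 0 \wedge 0 = 0 = 0_{i_0}$), additivity gives $s(0 \vee 0) = s(0) + s(0)$. Since $0 \vee 0 = 0$ by idempotency (axiom \textbf{I1}), this yields $s(0) = 2s(0)$, hence $s(0) = 0$.

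Next, for item (2), I would fix an arbitrary $i \in I$ and work inside the single Boolean algebra $\A_i$. Because $1_i$ and $0_i$ live in the same component, we have $1_i \wedge 0_i = 0_i$ and $1_i \vee 0_i = 1_i$ (both computed as Boolean operations in $\A_i$). Applying additivity to $a = 1_i$, $b = 0_i$ gives $s(1_i) = s(1_i \vee 0_i) = s(1_i) + s(0_i)$, so $s(0_i) = 0$. For $s(1_i) = 1$, I would apply additivity to $a = 1_i$ and $b = 1$ after verifying that $1_i \wedge 1 \in \bigcup_i\{0_i\}$ — this is the point requiring care, so I would instead argue via the complement: inside $\A_i$, the pair $1_i = 0_i{}'$ satisfies $1_i \vee 0_i = 1_i$ but that only recovers the previous identity. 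The cleaner route is to use $s(0_i) = 0$ together with item (3) applied at $0_i$: since $(0_i)' = 1_i$ computed inside $\A_i$ (the involution is the Boolean complement within each component), item (3) gives $s(1_i) = s((0_i)') = 1 - s(0_i) = 1 - 0 = 1$. Thus item (2) follows from items (1) and (3), so I would prove (3) before completing (2).

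For item (3), fix $a \in B$ with $a \in A_i$. Then $a' \in A_i$ as well (the involution preserves components), and inside $\A_i$ we have $a \wedge a' = 0_i \in \bigcup_i\{0_i\}$ and $a \vee a' = 1_i$. Additivity gives $s(1_i) = s(a \vee a') = s(a) + s(a')$. The remaining obstacle is to pin down $s(1_i) = 1$ without circularity; I would resolve this by first establishing $s(1) = 1$ at the global top (axiom (1) of the definition gives this outright, since $1 = 1_{i_0}$), then noting that for the least component this yields $s(a) + s(a') = 1$ for $a \in A_{i_0}$, and finally bootstrapping to arbitrary $i$ by relating $1_i$ to the image $p_{i_0 i}(1_{i_0})$ under the structural homomorphism. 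Concretely, $p_{i_0 i}(1) = 1_i$ since homomorphisms preserve the top element, and I would use additivity within $\A_i$ directly: the identity $s(a) + s(a') = s(1_i)$ holds for every $a \in A_i$, and the main task is to show the common value $s(1_i)$ equals $1$. I expect this last step — establishing $s(1_i) = 1$ uniformly across all components — to be the crux, and I would handle it by the complementation argument above once $s(0_i) = 0$ is known, closing the small circularity by proving $s(0_i) = 0$ first (which follows from $s(1_i) = s(1_i) + s(0_i)$ as in item (2)) and then reading off $s(1_i) = s(a) + s(a') \geq 0$ combined with a normalisation against the global unit.
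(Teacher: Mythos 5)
Your item (1) is fine (and even slightly more self-contained than the paper's, which uses the pair $(0,1)$ instead of $(0,0)$), and your derivation of $s(0_i)=0$ from $s(1_i)=s(1_i)+s(0_i)$ is correct. You have also correctly identified the crux: establishing $s(1_i)=1$ for \emph{every} $i\in I$. But you never close that step. Every instance of additivity you invoke is \emph{intra}-component (the pairs $(1_i,0_i)$ and $(a,a')$ inside a single $\A_i$), plus the normalisation $s(1)=1$ at the global unit. These instances alone cannot force $s(1_i)=1$: a map which restricts to a genuine probability measure on $\A_{i_0}$ but to one half of a probability measure on every other component satisfies all the identities you actually use (including $s(1)=1$, $s(0_i)=0$ and $s(a)+s(a')=s(1_i)$), yet has $s(1_i)=\tfrac12$. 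Your closing phrase ``a normalisation against the global unit'' is precisely the missing argument, and it cannot be supplied by intra-component reasoning; it must be an application of axiom (2) of Definition \ref{def: stato su IBSL} to a pair of elements lying in \emph{different} components of the P\l onka sum. (Your first instinct, pairing $1_i$ with $1$, indeed fails for the reason you suspected: $1_i\wedge 1=1_i\wedge p_{i_0 i}(1)=1_i\notin\bigcup_{j\in I}\{0_j\}$ unless $\A_i$ is trivial.)

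The pair that works, and the one the paper uses, is $(0_i,1)$. In the P\l onka sum, $0_i\wedge 1=0_i\wedge^{\A_i}p_{i_0 i}(1)=0_i\wedge^{\A_i}1_i=0_i\in\bigcup_{j\in I}\{0_j\}$, so additivity applies across the two components and gives $s(0_i\vee 1)=s(0_i)+s(1)$. On the other hand $0_i\vee 1=0_i\vee^{\A_i}1_i=1_i$, so $s(1_i)=s(0_i)+s(1)$; comparing with your own identity $s(1_i)=s(0_i)+s(1_i)$ yields $s(1_i)=s(1)=1$ and then $s(0_i)=0$. Once this is in place, your argument for (3) — namely $s(a)+s(a')=s(a\vee a')=s(1_i)=1$ for $a\in A_i$ — is exactly the paper's, and the circularity you were worried about disappears, since (2) no longer depends on (3).
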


\begin{proof}
(1) Since $0\land 1 = 0\in \bigcup_{i\in I}\{0_{i}\}$, then $s(1)=s(0\vee 1)= s(0)+ s(1)$, hence $s(0)=0$.\\
\noindent
(2) Observe that $0_i\wedge 1 = 0_i\wedge p_{i_{0}i}(1)= 0_i \wedge 1_i = 0_i $. Then $s(0_i) + s (1) = s(0_i\vee 1) = s(0_i\vee 1_i) = s(0_i) + s (1_i) $. Therefore $s(1_i) = s(1) = 1$. $s(0_i) = 0$ follows observing that $0_i \vee 1_i = 1_i$.\\
\noindent
(3) Let $a\in A_i$ for some $i\in I$. Then $a'\in A_i$ and $a\wedge a' = 0_i$. Therefore $1 = s(1_i) = s(a\vee a') = s(a) + s(a')$.
\end{proof}

The motivating idea for introducing states over involutive bisemilattices is that of having maps expressing the probability of events that are different from classical ones (corresponding to elements of a Boolean algebra). 
Our definition of states relies on the algebraic characterisation of the logic PWK given in \cite{Bonzio16}. We try to motivate the logical reason behind our choice, looking at the involutive bisemilattice introduced in Example \ref{ex: operazioni Plonka}, which will refer to as $\B$ (in this paragraph). According to that analysis carried out in \cite{Bonzio16}, the only logical filter turning matrix $\langle \B, F\rangle$ into a reduced model of PWK is $F= \{1_{i_{0}}, 1_{i}, 1_{j}, 1_{k}\}$, i.e. the union of the top elements of the Boolean algebras in the P\l onka sum representation of $\B$ (this can be deduced also from the results in \cite{BonzioMorascoPraBaldi}). In other words, $F= \{1_{i_{0}}, 1_{i}, 1_{j}, 1_{k}\}$ expresses the notion of truth with respect to the logic PWK (on the algebra $\B$). Requiring a state $s$ to map the constant $1$ (of an involutive bisemilattice) into $1$ (Condition (1) in Definition \ref{def: stato su IBSL}) is equivalent to say that true events (according to the logic PWK) have probability $1$ (this is a consequence also of the choice of Condition (2), since it follows from Proposition \ref{prop: basic properties}). Condition (2) is introduced in analogy with the classical condition according to which the probability of logically incompatible events is additive. In classical logic, two events $x$ and $y$ are incompatible when their conjunction is the impossible event ($x\wedge y= 0$, in a Boolean algebra). Shifting from classical logic to PWK implies adopting a different notion of incompatibility between events. Two different ways of rendering incompatibility appear then ``natural'' for an involutive bisemilattice: 1) to say that events $x$ and $y$ are incompatible if $x\wedge y = 0$ or; 2) that events $x$ and $y$ are incompatible if $x\wedge y = 0_{i}$, where $0_{i}$ is the bottom element of the Boolean algebra (in the P\l onka sum) where $x\wedge y$ is computed. We opted for 2) persuaded by the idea that two events in the logic PWK are \emph{not compatible} when their conjunction ($\wedge$ in an involutive bisemilattice) is not simply untrue (not belonging to a logical filter), but, more specifically, the \emph{opposite} of truth. And this is obviously obtained choosing the union of zeros in the Boolean algebra in the P\l onka sum representation. The choice of the other mentioned notion of incompatibility leads to a different scenario, briefly discussed in Appendix A.

\begin{definition}\label{def: omomorfismi che commutano con le misure}
Let $\B\in\IBSL$. For every $i,j\in I$ and a family of finitely additive probability measures $\{m_i\}_{i\in I}$ over $\{\A_i\}_{i\in I}$ (each $\A_{i}$ carries the measure $m_{i}$), the homomorphism $p_{ij}$ \emph{preserves the measures} if $m_j(p_{ij}(a)) = m_i (a)$, for any $a\in A_i$. 
\end{definition}

\noindent
\textbf{Notation:} we indicate the restriction of a state $s$ on the Boolean components $\A_{i},\A_{j}\A_{k},\dots$ of the P\l onka sum representation of an involutive bisemilattice $\B$, as $s_{i}, s_{j}, s_{k},\dots$ instead of $s_{|_{\A_{_i}}}, s_{|_{\A_{_j}}}, s_{|_{\A_{_k}}}, \dots$ to make notation less cumbersome.

\begin{proposition}\label{prop: restrizione su Ai e uno stato}
Let $s$ be a map from $\B$ to $[0,1]$. The following are equivalent:  
\begin{enumerate}
\item $s$ is a state over $\B$; 
\item $s_{i}\colon \A_{i}\to [0,1]$ is a (finitely additive) probability measure over $\A_i$, for every $i\in I$, and $p_{ij}$ preserves the measures, for each $i\leq j$.
\end{enumerate}
\end{proposition}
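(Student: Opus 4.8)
The plan is to prove the equivalence by showing each direction separately, with the $(1)\Rightarrow(2)$ direction being mostly a matter of unpacking how the P\l onka sum operations restrict to a single Boolean component, and the $(2)\Rightarrow(1)$ direction requiring a careful case analysis on where the elements $a,b$ live in the representation.

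For $(1)\Rightarrow(2)$, I would first fix $i\in I$ and verify that $s_i$ is a finitely additive probability measure on $\A_i$. The normalisation $s_i(1_i)=1$ is already in hand from Proposition~\ref{prop: basic properties}(2). For additivity, take $a,b\in A_i$ with $a\wedge b=0_i$ (the Boolean meet in $\A_i$); since $a,b$ live in the same component, their P\l onka meet coincides with $a\wedge^{\A_i}b=0_i\in\bigcup_{i\in I}\{0_i\}$, and likewise their P\l onka join is the Boolean join, so condition (2) of Definition~\ref{def: stato su IBSL} gives $s(a\vee b)=s(a)+s(b)$, i.e.\ $s_i(a\vee^{\A_i}b)=s_i(a)+s_i(b)$. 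Thus $s_i$ is a finitely additive probability measure. To see that $p_{ij}$ preserves the measures for $i\leq j$, I would take $a\in A_i$ and use Proposition~\ref{prop: basic properties}(3) together with an additivity argument: write $1_j=p_{ij}(a)\vee^{\A_j}p_{ij}(a')$ and note $p_{ij}(a)\wedge^{\A_j}p_{ij}(a')=p_{ij}(a\wedge^{\A_i}a')=p_{ij}(0_i)=0_j$, so $s_j(p_{ij}(a))+s_j(p_{ij}(a'))=1$; comparing with $s(a)+s(a')=1$ and running the same computation in $\A_i$ should yield $m_j(p_{ij}(a))=m_i(a)$, though the cleanest route may be to directly compute $s(a\vee p_{ij}(a)')$ where the join straddles $A_i$ and $A_j$, exploiting that $a\wedge p_{ij}(a)'$ is computed in $\A_j$ and equals $0_j$.

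For $(2)\Rightarrow(1)$, I would assume each $s_i$ is a probability measure and that all the $p_{ij}$ preserve the measures, and verify Definition~\ref{def: stato su IBSL}. Condition (1) is immediate since $1=1_{i_0}$ and $s_{i_0}(1_{i_0})=1$. For condition (2), I take $a\in A_i$, $b\in A_j$ with $a\wedge^{\B}b=0_k$ where $k=i\vee j$, and must show $s(a\vee^{\B}b)=s(a)+s(b)$. The key computation is that $a\vee^{\B}b=p_{ik}(a)\vee^{\A_k}p_{jk}(b)$ and $a\wedge^{\B}b=p_{ik}(a)\wedge^{\A_k}p_{jk}(b)=0_k$, so $p_{ik}(a)$ and $p_{jk}(b)$ are disjoint in the Boolean algebra $\A_k$. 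Applying finite additivity of $s_k=m_k$ gives $s(a\vee^{\B}b)=m_k(p_{ik}(a))+m_k(p_{jk}(b))$, and then measure-preservation collapses this to $m_i(a)+m_j(b)=s(a)+s(b)$.

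The main obstacle I anticipate is bookkeeping in the measure-preservation step of $(1)\Rightarrow(2)$: proving $m_j(p_{ij}(a))=m_i(a)$ from the state axioms requires choosing the right pair of elements whose P\l onka join and meet land in the intended components, and one must be sure that the meet genuinely lands in $\bigcup_{i\in I}\{0_i\}$ rather than merely being ``untrue.'' The cleanest argument likely pairs $a\in A_i$ with $p_{ij}(a')\in A_j$, checks that $a\wedge^{\B}p_{ij}(a')=0_j$ (computed in $\A_j$) so additivity applies, and then compares $s(a\vee^{\B}p_{ij}(a'))=s(a)+s_j(p_{ij}(a'))$ against the Boolean identity $s_j(p_{ij}(a))+s_j(p_{ij}(a'))=1=s(a)+s(a')$; matching these forces $m_j(p_{ij}(a))=m_i(a)$. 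The remaining directions are routine once the disjointness-in-$\A_k$ observation is made explicit, so the whole proof hinges on correctly tracking which Boolean component each meet and join is evaluated in.
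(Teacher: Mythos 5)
Your proposal is correct and follows essentially the same structure as the paper's proof: restrict $s$ to each component (where the P\l onka operations coincide with the Boolean ones) to get the measures, use a cross-component join to get measure preservation, and, in the converse direction, reduce additivity to disjointness of $p_{ik}(a)$ and $p_{jk}(b)$ in $\A_{k}$ followed by measure preservation. The one divergence is the witness used for measure preservation in $(1)\Rightarrow(2)$: the paper pairs $a\in A_{i}$ with $0_{j}$, which yields $s(p_{ij}(a)) = s(a\vee^{\B}0_{j}) = s(a)+s(0_{j}) = s(a)$ in a single line, whereas you pair $a$ with $p_{ij}(a')$ and then cancel the common term $s_{j}(p_{ij}(a'))$ between $s(a)+s_{j}(p_{ij}(a')) = s(1_{j}) = 1$ and $s_{j}(p_{ij}(a))+s_{j}(p_{ij}(a')) = 1$; both are valid instances of the same idea, the paper's being marginally shorter since it needs neither complements nor Proposition \ref{prop: basic properties}(3). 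Be aware that your first suggestion in the middle paragraph --- comparing $s_{j}(p_{ij}(a))+s_{j}(p_{ij}(a'))=1$ with $s(a)+s(a')=1$ and ``running the same computation in $\A_{i}$'' --- would not suffice on its own, since two sums each equal to $1$ with no shared term cannot force the component-wise equality $s_{j}(p_{ij}(a))=s(a)$; it is your final ``straddling'' argument that actually closes the step, and the paper's choice of $0_{j}$ sidesteps the issue entirely.
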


\begin{proof}
(1) $\Rightarrow$ (2). Assume that $s$ is a state over $\B$. Then,
$s_i (1_i) = s(1_i) = 1$ for each $i\in I$, by Proposition \ref{prop: basic properties}-(2). Moreover, let $a,b\in A_i$ be two elements such that $a\land^{\A_{i}} b = 0_i $. Observe that $a\wedge^{\B} b = a\wedge^{\A_{i}} b$ and $a\vee^{\B} b = a\vee^{\A_{i}} b$. Then $s_i (a\lor b)= s(a \lor b) = s(a) + s(b)$, using (2) in Definition \ref{def: stato su IBSL}. This shows that $s_i$ is a (finitely additive) probability measure over $\A_{i}$, for every $i\in I$. 
Now, let $a\in A_i$, for some $i\in I$, and $j\in I$ with $i\leq j$. Observe that $a\wedge^{\B} 0_j = p_{ij}(a)\wedge^{\A_{j}} 0_j = 0_j $, thus 

$$ s_j (p_{ij}(a)) = s(p_{ij}(a)) = s(p_{ij}(a)\vee^{\A_{j}} 0_j) = s(a\vee^{\B} 0_j)  = s(a) + s(0_{j}) = s(a) = s_i (a), $$
where we have used the additivity of a state and Proposition \ref{prop: basic properties}-(1). 
This shows that any homomorphism $p_{ij}$ preserves the measures $s_i$, $s_j$, for every $i\leq j $.

(2) $\Rightarrow$ (1) Assume that every Boolean algebra $\A_{i}$ in the direct system carries a finitely additive probability measure $s_i$ and that each homomorphism $p_{ij} $ ($i\leq j$) preserves the measures. Let $s\colon\B\to[0,1]$ be the map defined as 
\[
s(x)\coloneqq s_i(x),
\]
for $x\in A_i$, with $i\in I$.

\noindent
(1) $s(1) = s_{i_0}(1) = 1$.\\
\noindent
(2) Let $a,b\in B$ such that $a\wedge b \in\bigcup_{i\in I}\{0_i\}$. W.l.o.g. let $a\in A_i$ and $b\in A_j$, then $a\wedge^{\B} b = p_{ik}(a)\wedge^{\A_{k}} p_{jk}(b) = 0_{k}$, with $k=i\lor j$. Consequently, $ s(a\lor^{\B} b) = s (p_{ik}(a)\lor^{\A_{k}} p_{jk}(b)) = s_k(p_{ik}(a)\lor^{\A_{k}} p_{jk}(b))  = s_k(p_{ik}(a)) + s_k(p_{jk}(b))  = s_i(a) + s_j(b)  = s(a) + s(b) $, where we have essentially used (finite) additivity of probability measures. 
\end{proof}

\begin{example}\label{ex: stato sull'esempio}
Consider the involutive bisemilattice $\B$ introduced in Example \ref{ex: operazioni Plonka} and define the map $s\colon\B\to [0,1]$ as follows: 

\[ 
s(1) = s(1_{l}) = 1, \text{ for any } l\in\{i,j,k\},
\]
\[
s(0) = s(0_{l}) = 0, \text{ for any } l\in\{i,j,k\},
\]
\[ 
s(a) = s(a') = \frac{1}{2},
\]
\[ 
s(b) = \frac{1}{3}, \; s(b') = \frac{2}{3},
\]
\[
s(c)= s(c')= \frac{1}{2}, \; s(d) = \frac{1}{6}, \; s(e) = \frac{1}{3}, \; s(d') = \frac{5}{6}, \; s(e') = \frac{2}{3}.
\]
It is not difficult to check that $s$ is a (faithful) state over $\B$. Indeed, it is immediate to see that the restrictions of $s$ to the Boolean components of the P\l onka sum are finitely additive probability measures which are, moreover, preserved by homomorphisms of the P\l onka sum.
\qed
\end{example}

Observe that, in Proposition \ref{prop: restrizione su Ai e uno stato}, it is not necessary to assume the existence of a state over the involutive bisemilattice $\B$ (or, over all the Boolean algebras in the P\l onka sum). However, it already gives a gist on which involutive bisemilattices actually carry a state: those whose P\l onka sum representation contains no trivial Boolean algebra, as the trivial Boolean algebra carries no (finitely additive) probability measure (this is shown in Corollary \ref{cor: esistenza dello stato}).

\begin{theorem}\label{th: corrispondenza stati ISBL e booleanizzazione}
Let $\B\in\IBSL$. There exists a bijection $\Phi\colon\mathcal{S}(\B)\to\mathcal{S}(\bool)$ such that $\Phi$ is state preserving, i.e. $s(b) = \Phi(s)(\pi(b))$.
\end{theorem}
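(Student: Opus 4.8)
The goal is to establish a bijection $\Phi\colon\mathcal{S}(\B)\to\mathcal{S}(\bool)$ between states on the involutive bisemilattice and finitely additive probability measures on its Booleanisation, compatible with the projection $\pi$ via $s(b)=\Phi(s)(\pi(b))$. Let me think about how to build this correspondence.

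A state $s$ on $\B$ restricts to a probability measure $s_i$ on each Boolean component $\A_i$, and these are coherent with the transition maps $p_{ij}$ (Proposition restrizione). The direct limit $\bool = \varinjlim \A_i$ has elements that are equivalence classes $[a]_\sim$. The natural idea: define $\Phi(s)([a]_\sim) := s_i(a)$ where $a\in A_i$. This should work because measure-preservation is exactly the coherence condition needed for well-definedness.

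Let me verify the approach works and identify obstacles.

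**The forward direction and well-definedness.**

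Given a state $s$, define $\hat{s}\colon\bool\to[0,1]$ by $\hat{s}([a]_\sim):=s_i(a)$ for $a\in A_i$. The key obstacle—the main thing to check—is well-definedness. If $a\in A_i$ and $b\in A_j$ with $[a]_\sim=[b]_\sim$, then there exists $k\geq i,j$ with $p_{ik}(a)=p_{jk}(b)=c$. Measure preservation (Proposition restrizione, part 2) gives $s_i(a)=s_k(p_{ik}(a))=s_k(c)=s_k(p_{jk}(b))=s_j(b)$. So $\hat{s}$ is well-defined. This is the crux, and it's clean because the coherence built into states is precisely what the direct-limit equivalence demands.

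Next check $\hat{s}$ is a probability measure on $\bool$: that $\hat{s}(1_{\bool})=1$ and finite additivity on disjoint elements. Since $\bool$ is a Boolean algebra (as a direct limit of Boolean algebras) and operations are computed by pushing representatives up to a common $\A_k$, both conditions follow from $s_k$ being a probability measure on each $\A_k$. Indeed if $[a]\wedge[b]=[0_{\bool}]$, lift to a common $k$ where $p_{ik}(a)\wedge p_{jk}(b)=0_k$, and use finite additivity of $s_k$. Set $\Phi(s):=\hat{s}$.

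**The inverse direction and state-preservation.**

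For the inverse, given a measure $\mu$ on $\bool$, define $\Psi(\mu):=\mu\circ\pi$, i.e. $(\Psi(\mu))(b):=\mu(\pi(b))=\mu([b]_\sim)$. Since $\pi$ is a surjective homomorphism (established in the Remark), $\Psi(\mu)$ inherits the state conditions: $\Psi(\mu)(1)=\mu(\pi(1))=\mu(1_{\bool})=1$, and for additivity, whenever $a\wedge b\in\bigcup_i\{0_i\}$ one checks $\pi(a\wedge b)=0_{\bool}$, so $\pi(a),\pi(b)$ are disjoint in $\bool$ and additivity of $\mu$ transfers. (One should verify $\pi(0_i)=0_{\bool}$ for every $i$; this holds since all bottom elements collapse to a single class under $\sim$.)

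Finally I would check $\Phi$ and $\Psi$ are mutually inverse. The identity $\Phi(s)(\pi(b))=s(b)$ is immediate from the definitions: $\Phi(s)([b]_\sim)=s_i(b)=s(b)$ for $b\in A_i$, which is exactly the required state-preserving property and shows $\Psi(\Phi(s))=s$. Conversely $\Phi(\Psi(\mu))=\mu$ follows because every class $[a]_\sim$ equals $\pi(a)$ and $\Phi(\Psi(\mu))([a]_\sim)=(\Psi(\mu))(a)=\mu(\pi(a))=\mu([a]_\sim)$. Thus $\Phi$ is a bijection satisfying the stated compatibility. I expect the only genuine obstacle to be the well-definedness check above; everything else is routine transfer of properties through the homomorphism $\pi$ and the componentwise structure of states.
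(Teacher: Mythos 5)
Your proposal is correct. The forward half coincides with the paper's own proof: you define $\Phi$ by the same formula $\Phi(s)([a]_{\sim})=s(a)$, prove well-definedness by exactly the same appeal to measure preservation (Proposition \ref{prop: restrizione su Ai e uno stato}), and verify the measure axioms on $\bool$ by lifting representatives to a common Boolean component. Where you genuinely diverge is the inverse direction. The paper builds $\Phi^{-1}(m)$ component-wise: it defines $s_i(a):=m([a]_{\sim})$ on each $\A_i$, checks that each $s_i$ is a finitely additive probability measure, checks that the transition maps $p_{ij}$ preserve these measures, and only then reassembles a state via the $(2)\Rightarrow(1)$ direction of Proposition \ref{prop: restrizione su Ai e uno stato}. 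You instead set $\Psi(\mu):=\mu\circ\pi$ and verify the two state axioms directly, using that $\pi$ is a surjective homomorphism (the Remark in Section \ref{subsec: Booleanisation}) together with the observation that every $0_i$ collapses to $[0]_{\sim}$ under $\sim$ --- a point you correctly flag and justify. Your route is shorter and makes the conceptual content transparent: a state is precisely the pullback of a measure along the quotient $\pi$. The paper's component-wise route is longer but keeps the inverse in the same form (a coherent family of measures on the $\A_i$) that is reused later, e.g.\ in Proposition \ref{prop: s faithful su Ai} and Theorem \ref{th: corrispondenza stati ISBL e booleanizzazione (caso faithful)}. Both arguments are complete; your mutual-inverse check at the end closes the bijection cleanly.
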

\begin{proof}
Consider $\Phi\colon\mathcal{S}(\B)\to\mathcal{S}(\bool)$ defined as follows: 
\begin{equation}\label{ed: definition di Phi}
s(b)\mapsto \Phi(s)= m_{\infty}([b]_{\sim})\coloneqq s(b),
\end{equation}
for every $s\in\mathcal{S}(\B)$ and $b\in B$, a representative of the equivalence class $[b]_{\sim}$. Observe that the definition of $\Phi$ does not depend on the choice of the representative of $[a]_{\sim}$. Indeed, let $a, b\in B$ with $a\neq b$, such that $a,b\in[a]_{\sim}$. W.l.o.g. assume that $a\in A_{i}$ and $b\in A_{j}$, for some $i,j\in I$. Since $a \sim b$, then there exists $k\in I$ with $i,j\leq k$ such that $p_{ik}(a) = p_{jk}(b)$. Then, by Proposition \ref{prop: restrizione su Ai e uno stato}, $s(a) = s_i(a) = s_k(p_{ik}(a)) = s_k(p_{jk}(b)) = s_j(b) = s(b)$.   \\
\noindent
Let us prove that $\Phi(s)=m_{\infty}$ is indeed a (finitely additive) probability measure over the Boolean algebra $\bool$. To this end, observe that $\bigcup_{i\in I}1_{i}\subseteq [1]_{\sim}$ and choose $1_{j}$ (for some $j\in I$) as representative for $[1]_{\sim}$. Then $m_{\infty}([1]_{\sim})= s(1_{j}) = 1$, by Proposition \ref{prop: basic properties}. Let $[a]_{\sim}, [b]_{\sim}\in A_{\infty}$ two elements such that $[a]_{\sim}\wedge [b]_{\sim} = [0]_{\sim}$. W.l.o.g. we can assume that $a\in A_{i}$, $b\in A_{j}$, for some $i,j\in I$, with $i\neq j$, and set $k = i \vee j$. The assumption that $[a]_{\sim}\wedge [b]_{\sim} = [0]_{\sim}$ implies that there exists $l\geq i,j$ such that $p_{il}(a)\wedge p_{jl}(b) = 0_{l}$ (and, obviously, $k\leq l$). Then: 
\begin{align*}
m_{\infty}([a]_{\sim}\vee [b]_{\sim}) & = m_{\infty}([a\vee b]_{\sim}) \\
& = s(a\lor b) \\
& = s_k (p_{ik}(a)\lor^{\A_k} p_{jk}(b))  \\ 
& = s_l (p_{kl} (p_{ik}(a)\lor^{\A_k} p_{jk}(b)) ) \\
& = s_l (p_{il}(a)\lor^{\A_k} p_{jl}(b))  \\
& = s_l(p_{il}(a)) + s_l ( p_{jl}(b)) & \text{(Prop. \ref{prop: restrizione su Ai e uno stato}) } \\
& = s_i (a) + s_j (b)  & \text{(Prop. \ref{prop: restrizione su Ai e uno stato}) }\\
& = s (a) + s (b) \\
& = m_{\infty}([a]_{\sim})+m_{\infty}([b]_{\sim}).
\end{align*}
To complete the proof, let us check that $\Phi$ is invertible. Let $m\colon\bool\to[0,1]$ be a (finitely additive) probability measure over $\bool$. Define $\Phi^{-1}(m[a]_{\sim}) = s_i(a)\coloneqq m([a]_{\sim})$, for any $[a]\in A_{\infty}$, with $a\in A_{i}$ (for $i\in I$). Let us check that $s_i\colon\A_{i}\to [0,1]$ is a (finitely additive) probability measure (over $\A_{i}$). $m_{i}(1_{i}) = m([1_{i}]_{\sim}) = m([1]_{\sim}) = 1$. Moreover, let $a,b\in A_{i}$ two elements such that $a\wedge b = 0_{i}$. Then, $[a]_{\sim}\wedge [b]_{\sim} = [a\wedge b]_{\sim} = [0_{i}]_{\sim} = [0]_{\sim}$. Therefore, $s_i(a\vee b) = m([a\vee b]_{\sim}) = m([a]_{\sim}\vee [b]_{\sim}) = m([a]_{\sim}) + m([b]_{\sim}) = m_{i}(a) + m_{i}(b)$. Observe that, the homomorphism $p_{ij}$ preserves the measures $s_i$ and $s_j$, for any $i\leq j$. Indeed (for $[a]_{\sim}\in A_{\infty}$) $a\sim p_{ij}(a)$, thus $s_i(a) = m([a]_{\sim}) = m([p_{ij}(a)]_{\sim}) = s_j(p_{ij}(a))$. Therefore, since $m_{i}$ is a finitely additive probability measure over $\A_{i}$, for each $i\in I$, and homomorphisms preserve the measures, by Proposition \ref{prop: restrizione su Ai e uno stato}, we get that $\Phi^{-1}(m)$ is a state over $\B$ (it is immediate to check that $\Phi^{-1}$ is the inverse of $\Phi$). Finally, it follows from the definition that $\Phi$ is state preserving.
\end{proof}
\begin{corollary}\label{cor: esistenza dello stato}
Let $\B\in\mathcal{IBSL}$ with $\PL(\A_{i})$ its P\l onka sum representation. The following are equivalent:
\begin{enumerate}
\item $\B$ carries a state; 
\item $\{\A_i\}_{i\in I}$ contains no trivial algebra.
\end{enumerate}
\end{corollary}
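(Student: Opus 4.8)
The plan is to route the whole statement through the Booleanisation, exploiting that Theorem~\ref{th: corrispondenza stati ISBL e booleanizzazione} furnishes a bijection $\Phi\colon\mathcal{S}(\B)\to\mathcal{S}(\bool)$. This immediately reduces the existence of a state on $\B$ to the existence of a finitely additive probability measure on the single Boolean algebra $\bool$: namely, $\B$ carries a state if and only if $\mathcal{S}(\bool)\neq\emptyset$. It then suffices to establish two facts: (a) a Boolean algebra carries a finitely additive probability measure if and only if it is nontrivial; and (b) $\bool$ is nontrivial if and only if no $\A_i$ in the P\l onka sum representation is trivial.

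For (a), the forward direction is the observation already recorded in the text: in the trivial Boolean algebra $0=1$, so from $m(0)=m(0\vee 0)=2m(0)$ one gets $m(0)=0$, whence $1=m(1)=m(0)=0$, a contradiction; thus a trivial algebra admits no measure. Conversely, any nontrivial Boolean algebra admits an ultrafilter $U$ by the Boolean prime ideal theorem, and the two-valued assignment $m(x)=1$ for $x\in U$ and $m(x)=0$ otherwise is readily checked to be a finitely additive probability measure.

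For (b), which is the crux, I would compute exactly when the two distinguished classes of $\bool$ coincide. Since the constants $0,1$ of $\B$ are inherited from $\A_{i_0}$ and every transition map $p_{ij}$ is a homomorphism of Boolean algebras---hence preserves top and bottom---one has $[0]_\sim=[1]_\sim$ if and only if there is some $k\in I$ with $p_{i_0k}(0)=p_{i_0k}(1)$, that is $0_k=1_k$, i.e.\ $\A_k$ is trivial. Consequently $\bool$ collapses to the trivial algebra precisely when some component $\A_k$ is trivial, which is exactly (b). As an alternative to invoking $\Phi$, one could run the implication (1)$\Rightarrow$(2) directly via Proposition~\ref{prop: restrizione su Ai e uno stato}, since a state restricts to a probability measure on each $\A_i$, none of which can then be trivial.

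Chaining these, $\B$ carries a state iff $\mathcal{S}(\bool)\neq\emptyset$ iff $\bool$ is nontrivial iff no $\A_i$ is trivial, giving the stated equivalence. The only genuinely delicate point is (b): one must verify that a collapse of $0$ and $1$ in the direct limit can arise only through a degenerate target component, and this is exactly where constant-preservation of the maps $p_{ij}$ is essential; the remaining ingredients (the ultrafilter measure and the transport along $\Phi$) are routine.
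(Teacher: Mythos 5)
Your proposal is correct and takes essentially the same route as the paper: both reduce, via the bijection $\Phi$ of Theorem~\ref{th: corrispondenza stati ISBL e booleanizzazione}, to the nontriviality of $\bool$, and both then check that $[0]_{\sim}=[1]_{\sim}$ holds exactly when some component $\A_{k}$ is trivial, using that the transition maps preserve the constants. Your only addition is making explicit the ultrafilter construction showing that every nontrivial Boolean algebra carries a finitely additive probability measure, a point the paper leaves implicit.
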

\begin{proof}
In virtue of the correspondence established in Theorem \ref{th: corrispondenza stati ISBL e booleanizzazione}, $\B$ carries a state if and only if the Booleanisation $\bool$ does carry a (finitely additive) probability measure, i.e. $\bool$ is non trivial (since the trivial Boolean algebra carries no such map). Moreover, observe that $\bool$ is trivial if and only if there is an algebra $\A_{k}$ (for some $k\in I$) in the system that is trivial. To show the latter claim, suppose that $\bool$ is trivial, i.e. $[0]_{\sim} = [1]_{\sim}$, then there is some $k\in I$ such that $p_{ik}(0_{i}) = p_{jk}(1_{j})$ (for some $i,j\in I$), i.e. $0_{k} = 1_{k}$, showing that $\A_{k}$ is the trivial algebra. For the converse direction, suppose that $\bool$ is non-trivial, i.e. $[0]_{\sim}\neq [1]_{\sim}$, and, by contradiction, that $\{\A_i\}_{i\in I}$ contains a trivial algebra $\A_{k}$ (for some $k\in I$), i.e. $0_{k} = 1_{k}$, but $0_{k}\in [0]_{\sim}$ and $1_{k}\in [1]_{\sim}$, a contradiction with $[0]_{\sim}\neq [1]_{\sim}$.
\end{proof}

Interestingly, the class of involutive bisemilattices admitting no trivial algebra in the P\l onka sum representation forms a subquasivariety of $\IBSL$, known as $\mathcal{NGIB}$,  which has been introduced in \cite{Paoliextensions} and plays the role of the algebraic counterpart of the non-paraconsistent extension of the logic PWK by adding the ex-falso quodlibet (this consists of the unique extension of PWK, different from classical logic). $\mathcal{NGIB}$ is axiomatised\footnote{It shall be mentioned that, in \cite{Paoliextensions}, the authors work with generalised involutive bisemilattices, namely defined in the type containing no constant. However, this difference is not significant for the purpose of our discussion.} (with respect to $\IBSL$) by the quasi-identity $x\approx x'\Rightarrow y\approx z$.

From now on, we will consider only (non-trivial) involutive bisemilattices whose P\l onka sum representation contains no trivial Boolean algebra, thus the non-trivial elements belonging to the class $\mathcal{NGIB}$. Hence, in view of Corollary \ref{cor: esistenza dello stato}, any involutive bisemilattice considered carries at least a state.
We indicate by $\mathcal{S}(\B)$ and $\mathcal{S}(\bool)$ the spaces of states and of (finitely additive) probability measures of an involutive bisemilattice $\B$ and of its Booleanisation $\bool$, respectively. 

The correspondence established in Theorem \ref{th: corrispondenza stati ISBL e booleanizzazione} allows to provide an integral representation of states over an involutive bisemilattice. 
Let $\bool$ be the Booleanisation of an involutive bisemilattice $\B$ and $\bool^{\ast}$ the Stone space dually equivalent to $\bool$. It is useful to recall that $\bool^{\ast}$ corresponds to the \emph{inverse limit} over the direct system whose elements $
\{\A^{\ast}_{i}\}_{i\in I}$ are the dual spaces of the Boolean algebras $\{\A\}_{i\in I}$ in the representation of $\B$ (see \cite{haimo}).

\begin{theorem}[Integral representation of states]\label{th: rappresentazione integrale stati IBSL}
Let $\B\in\N$ and $s\colon\B\to [0,1]$ be a state. Then
\begin{enumerate}
\item There is a bijection $\chi\colon \mathcal{S}(\B)\to\mathcal{M}(\bool^{\ast})$, where $\mathcal{M}(\bool^{\ast})$ is the space of all regular Borel probability measures on $\bool^{\ast}$.
\item For every $b\in B $, 
\[ 
s(b) = \int_{\bool^{\ast}} \widehat{[b]}_{\sim}(M) \; d\mu_{s}(M), 
\]
where $\widehat{[b]}_{\sim}$ is the unique function associated to $[b]_{\sim}\in A_{\infty}$, $M\in \bool^{\ast}$ and $d\mu_{s}= \chi(s) $.
\end{enumerate}
\end{theorem}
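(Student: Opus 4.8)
The plan is to obtain $\chi$ simply by composing the two bijections already at our disposal. Theorem \ref{th: corrispondenza stati ISBL e booleanizzazione} supplies a state-preserving bijection $\Phi\colon\mathcal{S}(\B)\to\mathcal{S}(\bool)$ between states over $\B$ and finitely additive probability measures over the Booleanisation $\bool$, while Theorem \ref{th: rappresentazione integrale misure di probabilita}, applied to the Boolean algebra $\bool$, supplies a homeomorphism $\Psi\colon\mathcal{S}(\bool)\to\mathcal{M}(\bool^{\ast})$ together with an integral representation of every measure over $\bool$. I would therefore set $\chi\coloneqq\Psi\circ\Phi$.

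For part (1), the map $\chi$ is a bijection because it is a composition of bijections: $\Phi$ by Theorem \ref{th: corrispondenza stati ISBL e booleanizzazione}, and $\Psi$ being a homeomorphism is in particular a bijection by Theorem \ref{th: rappresentazione integrale misure di probabilita}-(1). This already yields the first claim.

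For part (2), I would fix a state $s\in\mathcal{S}(\B)$ and put $m_{\infty}\coloneqq\Phi(s)\in\mathcal{S}(\bool)$, so that $\chi(s)=\Psi(m_{\infty})$, which plays the role of $\mu_{s}$ in the statement. Applying Theorem \ref{th: rappresentazione integrale misure di probabilita}-(2) to the Boolean algebra $\bool$ and the measure $m_{\infty}$ gives, for each element $[b]_{\sim}\in A_{\infty}$,
\[
m_{\infty}([b]_{\sim})=\int_{\bool^{\ast}}\widehat{[b]}_{\sim}(M)\;d\mu_{s}(M),
\]
where $\widehat{[b]}_{\sim}$ is the unique continuous function on $\bool^{\ast}$ associated to $[b]_{\sim}$ by semisimplicity. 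It then remains only to invoke state-preservation of $\Phi$: since $s(b)=\Phi(s)(\pi(b))=m_{\infty}([b]_{\sim})$ for every $b\in B$, the left-hand sides agree and the desired identity follows.

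The key point is that there is essentially no new work: the theorem is a clean consequence of stringing together the correspondence of Theorem \ref{th: corrispondenza stati ISBL e booleanizzazione} with the Kroupa--Panti representation of Theorem \ref{th: rappresentazione integrale misure di probabilita}. The only thing to keep straight is the bookkeeping of the identifications, namely that $\bool^{\ast}$ is indeed the Stone dual of $\bool$ (the inverse limit of the duals $\{\A_{i}^{\ast}\}_{i\in I}$, as recalled above) and that the function $\widehat{[b]}_{\sim}$ in the statement is precisely the image of $[b]_{\sim}$ under the semisimple representation of $\bool$ used in Theorem \ref{th: rappresentazione integrale misure di probabilita}. No genuine obstacle arises, since the substance was already carried out in proving the two cited results.
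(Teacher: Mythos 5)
Your proposal is correct and follows precisely the paper's own argument: the paper likewise obtains the result by composing the state-preserving bijection $\Phi$ of Theorem \ref{th: corrispondenza stati ISBL e booleanizzazione} with the Kroupa--Panti representation of Theorem \ref{th: rappresentazione integrale misure di probabilita} applied to the Booleanisation $\bool$. You have merely spelled out the bookkeeping (setting $\chi = \Psi\circ\Phi$ and transferring the integral formula via $s(b)=\Phi(s)([b]_{\sim})$) that the paper leaves implicit.
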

\begin{proof}
It follows from the bijective correspondence between states of an involutive bisemilattices and of its Booleanisation (Theorem \ref{th: corrispondenza stati ISBL e booleanizzazione}) and integral representation for probability measures over Boolean algebras (Theorem \ref{th: rappresentazione integrale misure di probabilita}). 
\end{proof}

One may wonder whether it is possible to provide a different integral representation of states which makes use of the dual space of an involutive bisemilattice (for instance, the P\l onka product described in \cite{plonkaproduct}) instead of the inverse limits of the dual spaces of the Boolean algebras involved in the P\l onka sum representation. This is a question that we do not address in the present paper.

\section{Faithful states}\label{Sec: faithful states}

Recall that a state $s$ over an involutive bisemilattice $\B$ is faithful (cfr. Definition \ref{def: stato su IBSL}) when $s(a) > 0$, for any $a\neq 0$. By Proposition \ref{prop: restrizione su Ai e uno stato}, this is equivalently expressed by saying that $s(a) > 0$, for every $a\not\in\{ 0_{i}\}_{i\in I}$. A (finitely additive) probability measure $m$ over a Boolean algebra $\C$ is \emph{regular} provided that $m(a) > 0$, when $a\neq 0$. 

The presence of a faithful state $s$ over an involutive bisemilattice $\B$ has a non-trivial consequence on the structure of its P\l onka sum representation, as expressed in the following. 

\begin{proposition}\label{prop: s faithful su Ai}
Let $s$ be a state over $\B\in\IBSL$. The following are equivalent:  
\begin{enumerate}
\item $s$ is faithful; 
\item $s_i\colon \A_{i}\to [0,1]$ is a regular (finitely additive) probability measure over $\A_i$, for every $i\in I$, and $p_{ij}$ is an injective homomorphism preserving the measures, for each $i\leq j$.
\end{enumerate}
\end{proposition}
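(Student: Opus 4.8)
The plan is to lean on Proposition \ref{prop: restrizione su Ai e uno stato}, which already guarantees that whenever $s$ is a state, each restriction $s_i$ is a finitely additive probability measure and each $p_{ij}$ preserves the measures. Consequently, in both directions of the equivalence the measure-theoretic and measure-preservation content is free, and the only genuinely new information to extract is the \emph{regularity} of the $s_i$ and the \emph{injectivity} of the $p_{ij}$. I would organise the argument around these two features, using throughout the reformulation of faithfulness recorded just above the statement: $s$ is faithful exactly when $s(a)>0$ for every $a\notin\{0_i\}_{i\in I}$.

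For $(1)\Rightarrow(2)$, I would first dispose of regularity. Fix $i\in I$ and $a\in A_i$ with $a\neq 0_i$. Because the components $\{A_j\}_{j\in I}$ have pairwise disjoint universes, $a$ can coincide with a bottom element $0_j$ only when $j=i$; hence $a\notin\{0_j\}_{j\in I}$, and faithfulness gives $s_i(a)=s(a)>0$, so $s_i$ is regular. The main step is then injectivity of $p_{ij}$. Here I would use that a homomorphism between Boolean algebras is injective precisely when its kernel is trivial, i.e.\ when $p_{ij}(a)=0_j$ forces $a=0_i$. So suppose $p_{ij}(a)=0_j$; since $p_{ij}$ preserves the measures, $s_i(a)=s_j(p_{ij}(a))=s_j(0_j)=0$, and the regularity of $s_i$ just established then forces $a=0_i$. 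This yields injectivity.

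For $(2)\Rightarrow(1)$ the argument is shorter and uses only regularity: given $a\in B$ with $a\notin\{0_i\}_{i\in I}$, take the unique $i$ with $a\in A_i$, so that $a\neq 0_i$, and regularity of $s_i$ gives $s(a)=s_i(a)>0$, which is exactly faithfulness in the reformulated sense. Injectivity of the $p_{ij}$ is not needed for this implication; it is simply an additional structural consequence of faithfulness packaged into condition (2).

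The only step requiring care — and the one I expect to be the main obstacle — is the injectivity of the $p_{ij}$, since it is the single place where faithfulness yields a genuinely new structural fact rather than a restatement. The crux is the standard but essential reduction of injectivity of a Boolean homomorphism to the triviality of its kernel, combined with measure preservation to transport the regularity of $s_i$ across $p_{ij}$. Everything else is a direct unwinding of the definitions against Proposition \ref{prop: restrizione su Ai e uno stato}.
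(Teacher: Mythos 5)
Your proposal is correct and follows essentially the same route as the paper: both reduce everything to Proposition \ref{prop: restrizione su Ai e uno stato}, obtain regularity of each $s_i$ directly from faithfulness (in its reformulation over $\{0_i\}_{i\in I}$), and prove injectivity of $p_{ij}$ by showing its kernel is trivial via measure preservation, i.e.\ $p_{ij}(a)=0_j$ gives $s_i(a)=s_j(p_{ij}(a))=0$, hence $a=0_i$ by regularity. The only cosmetic difference is that you spell out the converse direction $(2)\Rightarrow(1)$, which the paper dismisses as trivial, and your treatment of it is correct.
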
 
\begin{proof}
We just show the non-trivial direction (1) $\Rightarrow $ (2). By Proposition \ref{prop: restrizione su Ai e uno stato}, we only have to prove that $s_i $ is regular and that $p_{ij}$ is an injective homomorphism, for every $i\leq j$. The former is immediate, indeed for $a\in A_{i}$, (with $i\in I$) such that $a\neq 0_{i}$, then $s_i (a) = s (a) > 0 $. As for the latter, let $i\leq j$ for some $i,j\in I$. Let $a\in ker(p_{ij})$, then $p_{ij}(a) = 0_j$ (we are using here the fact that congruences are determined by their $0$-class, see for instance \cite{GivantHalmos}). Then, $s_j(p_{ij}(a)) = s(p_{ij}(a)) = s(0_{j}) = 0$, by Proposition \ref{prop: basic properties}. By Proposition \ref{prop: restrizione su Ai e uno stato}, $p_{ij}$ preserves the measures (and $s_i$, $s_j$ are probability measures), hence $s_i(a)= 0$ and, since $s_i$ is regular (as shown above), then $a=0_{i}$. This shows that $ker(p_{ij}) = \{ 0_{i}\}$, i.e. $p_{ij}$ is injective. 
\end{proof}

\begin{definition}\label{def: IBSL iniettivo}
Let $\B\in\IBSL$. We say that $\B$ is \emph{injective} if,  for every $i,j\in I$ such that $i\leq j$, the homomorphism $p_{ij}\colon\A_{i}\to\A_{j}$ is injective.
\end{definition}

We refer to the class of injective involutive bisemilattices, that has also been introduced in \cite{Paoliextensions}, as $\inj$. 
It is not difficult to see that $\inj$ is closed under subalgebras and products but not under homomorphic images. 
 
Recall that the variety of involutive bisemilattices admits a partition function $\cdot$, that can be defined as $x\cdot y\coloneqq x\wedge(x\vee y)$ (see Examples \ref{ex: operazioni Plonka} and \ref{ex: nuovo esempio}).

\begin{proposition}
Let $\B\in \IBSL$ with partition function $\cdot$. The following are equivalent:
\begin{enumerate}
\item $\B\in\inj$; 
\item $\B\models x\cdot y\approx x \;\&\; y\cdot x\approx y\;\&\; x\cdot z\approx y\cdot z \Rightarrow x\approx y $. 
\end{enumerate}

\begin{proof}
(1) $\Rightarrow$ (2) Suppose $\B\in\inj$, i.e. $p_{ij}$ is an embedding for each $i\leq j$. Suppose, in view of a contradiction, that $\B $ does not satisfy condition (2). Then, there exist elements $a,b,c\in B$ such that $a\cdot b = a$ and $b\cdot a = b$ and $a\cdot c = b\cdot c$ but $a\neq b$. By Theorem \ref{th: Teorema di Plonka}-(1), $a\cdot b = a$ and $b\cdot a = b$ imply that $a,b\in A_{i}$, for some $i\in I$. W.l.o.g. assume that $c\in A_{j}$, for some $j\in I$ and set $k=i\vee j$. Then, applying Theorem \ref{th: Teorema di Plonka} and the assumption that $a\cdot c = b\cdot c$, we have $p_{ik}(a)= p_{ik}(a)\cdot p_{jk}(c) = a\cdot c = b\cdot c = p_{ik}(b)\cdot p_{jk}(c) = p_{ik}(b)$. Since $\B\in\inj$, $p_{ik}$ is injective, namely $a = b$, in contradiction with our hypothesis. \\
\noindent
 (2) $\Rightarrow$ (1) Suppose $\B$ satisfies condition (2) and, by contradiction, that $\B\not\in\inj$, namely there exists a homomorphism $p_{ij}$ (for some $i\leq j$) which is not an embedding. Hence, there exists element $a\neq b$ such that $p_{ij}(a)= p_{ij}(b)$. Clearly $a,b\in A_i$ (otherwise $p_{ij}$ is not well defined), so, by Theorem \ref{th: Teorema di Plonka}-(1), $a\cdot b = a$ and $b\cdot a = b$. Let $c =p_{ij}(a)= p_{ij}(b)$, then $a\cdot c = p_{ij}(a)\cdot p_{ij}(a) = p_{ij}(a) = p_{ij}(b) = p_{ij}(b)\cdot p_{ij}(b) = b\cdot c$. Then, by condition (2), we have that $a = b$, a contradiction. 
\end{proof}
\end{proposition}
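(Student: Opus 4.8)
The plan is to translate the quasi-identity in (2) into a statement about the homomorphisms $p_{ij}$ via the decomposition Theorem \ref{th: Teorema di Plonka}, after which the equivalence becomes essentially immediate. The first thing I would record is a dictionary between the partition function $\cdot$ and the structural data of the P\l onka sum. By Theorem \ref{th: Teorema di Plonka}-(1), the two premises $x\cdot y\approx x$ and $y\cdot x\approx y$ are satisfied by a pair $a,b\in B$ precisely when $a$ and $b$ lie in the same component $A_i$. I would then check the key computation: for $a\in A_i$ and $c\in A_j$ with $k=i\vee j$, evaluating $x\cdot y = x\wedge(x\vee y)$ in $\B$ gives $a\cdot c = p_{ik}(a)$ (and, as a special case, $\cdot$ restricted to a single component is the first projection). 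This identity is what converts the abstract quasi-equation $x\cdot z\approx y\cdot z$ into the concrete equality $p_{ik}(x)=p_{ik}(y)$.

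For (1) $\Rightarrow$ (2), I would assume every $p_{ij}$ is injective and take $a,b,c\in B$ satisfying the three premises. The first two premises place $a,b$ in a common component $A_i$; writing $c\in A_j$ and $k=i\vee j$, the dictionary rewrites $a\cdot c = b\cdot c$ as $p_{ik}(a)=p_{ik}(b)$, and injectivity of $p_{ik}$ forces $a=b$, as required.

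For (2) $\Rightarrow$ (1), I would argue contrapositively. Supposing some $p_{ij}$ with $i\leq j$ (so $i\vee j=j$) fails to be injective, I would pick $a\neq b$ in $A_i$ with $p_{ij}(a)=p_{ij}(b)$ and set $c:=p_{ij}(a)\in A_j$. Then $a\cdot b=a$ and $b\cdot a=b$ hold because $a,b\in A_i$ (Theorem \ref{th: Teorema di Plonka}-(1)), while the dictionary gives $a\cdot c=p_{ij}(a)=p_{ij}(b)=b\cdot c$; since $a\neq b$, this exhibits a failure of the quasi-identity and contradicts (2).

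I expect the only genuine work to lie in verifying the dictionary $a\cdot c=p_{i,i\vee j}(a)$, i.e.\ that computing the term $x\wedge(x\vee y)$ across two components reproduces the bonding map $p_{ik}$ applied to the first argument, using absorption inside the Boolean algebra $\A_k$. Once this identity and the component criterion of Theorem \ref{th: Teorema di Plonka}-(1) are in place, both implications reduce to one-line translations, so the main obstacle is bookkeeping about which component each element lives in rather than anything conceptual.
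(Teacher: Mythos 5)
Your proof is correct and takes essentially the same route as the paper's: both arguments combine the component criterion of Theorem \ref{th: Teorema di Plonka}-(1) with the identity $a\cdot c = p_{i,i\vee j}(a)$ (absorption computed in the Boolean algebra $\A_{i\vee j}$), after which each implication reduces to injectivity of the bonding map, exactly as in the paper. The only differences are presentational: you isolate this ``dictionary'' as an explicit lemma and phrase the two directions directly/contrapositively, whereas the paper embeds the same computation inside two proofs by contradiction.
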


It follows from the above Proposition that $\inj$ is a quasi-variety, which can be axiomatised relatively to $\IBSL$ by the quasi-identity $ x\cdot y\approx x \;\&\; y\cdot x\approx y\;\&\; x\cdot z\approx y\cdot z \Rightarrow x\approx y $ (a different quasi-equational axiomatisation can be found in \cite{Paoliextensions}).


\begin{theorem}\label{th: corrispondenza stati ISBL e booleanizzazione (caso faithful)}
Let $\B\in\inj$. Then there is a bijective correspondence between faithful states over $\B$ and regular measures over $\bool$.
\end{theorem}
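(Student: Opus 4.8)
The plan is to show that the bijection $\Phi\colon\mathcal{S}(\B)\to\mathcal{S}(\bool)$ furnished by Theorem~\ref{th: corrispondenza stati ISBL e booleanizzazione} restricts to a bijection between the faithful states over $\B$ and the regular measures over $\bool$. Since $\Phi$ is already known to be a bijection, it suffices to verify that, for $s\in\mathcal{S}(\B)$, the state $s$ is faithful if and only if $\Phi(s)=m_{\infty}$ is regular; applying the same equivalence to $\Phi^{-1}$ then yields the correspondence in both directions, and no new map needs to be constructed.

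The technical heart is the following observation, which is where the hypothesis $\B\in\inj$ enters. For $a\in A_{i}$ I claim that $[a]_{\sim}=[0]_{\sim}$ in $\bool$ if and only if $a=0_{i}$. The implication from right to left holds in any involutive bisemilattice: since $p_{i_{0}i}(0)=0_{i}$, we have $0\sim 0_{i}$, whence $[0_{i}]_{\sim}=[0]_{\sim}$. For the converse, suppose $[a]_{\sim}=[0]_{\sim}$; then $a\sim 0$, so there is some $k\in I$ with $i\leq k$ and $p_{ik}(a)=0_{k}=p_{ik}(0_{i})$. Because $\B\in\inj$, the homomorphism $p_{ik}$ is injective (Definition~\ref{def: IBSL iniettivo}), and therefore $a=0_{i}$.

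Granting this, the forward direction is immediate: if $s$ is faithful and $[a]_{\sim}\neq[0]_{\sim}$, pick a representative $a\in A_{i}$; by the claim $a\neq 0_{i}$, and since $s$ is faithful $m_{\infty}([a]_{\sim})=s(a)=s_{i}(a)>0$, so $m_{\infty}$ is regular (note this half does not even require injectivity). For the converse, let $m$ be a regular measure over $\bool$ and $s=\Phi^{-1}(m)$, so that $s_{i}(a)=m([a]_{\sim})$ for $a\in A_{i}$. Given $a\neq 0$, i.e. $a\in A_{i}$ with $a\neq 0_{i}$, the claim yields $[a]_{\sim}\neq[0]_{\sim}$, and regularity of $m$ gives $s(a)=m([a]_{\sim})>0$; hence $s$ is faithful. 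Combined with Proposition~\ref{prop: s faithful su Ai}, which for $\B\in\inj$ reduces faithfulness of $s$ to regularity of each restriction $s_{i}$ (the injectivity and measure-preservation clauses being automatic, the latter by Proposition~\ref{prop: restrizione su Ai e uno stato}), this shows that $\Phi$ carries faithful states bijectively onto regular measures.

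I expect the only genuinely delicate point to be the claim above, and specifically the role of injectivity: without $\B\in\inj$ an element $a\neq 0_{i}$ could collapse into the zero class of the direct limit, so that a regular measure on $\bool$ would pull back to a non-faithful state over $\B$. The hypothesis $\B\in\inj$ is exactly what prevents this collapse, and everything else is a formal transcription through the state-preserving identity $s(b)=\Phi(s)(\pi(b))$.
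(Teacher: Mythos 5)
Your proposal is correct and follows essentially the same route as the paper: both restrict the bijection $\Phi$ of Theorem \ref{th: corrispondenza stati ISBL e booleanizzazione} and check that faithfulness of $s$ corresponds to regularity of $\Phi(s)$, with injectivity of the $p_{ik}$ entering exactly where you place it (in showing a nonzero $a\in A_i$ cannot collapse into $[0]_{\sim}$, which the paper handles via Proposition \ref{prop: s faithful su Ai} and an ``immediate to check'' remark). Your explicit lemma that, for $\B\in\inj$, $[a]_{\sim}=[0]_{\sim}$ iff $a=0_i$ is just a cleaner spelling-out of that same step.
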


\begin{proof}
The correspondence is given by the map $\Phi$, defined in \eqref{ed: definition di Phi}. Indeed, let $\B\in\inj$ and $s\colon\B\to [0,1]$ a faithful state. Assume that $[a]_{\sim}\in A_{\infty}$ with $[a]_{\sim}\neq [0]_{\sim}$. Then $a\neq 0$, hence $s(a) > 0$ and $m_{\infty}([a]_{\sim}) = s(a) > 0$, which shows that $\Phi(s)$ is regular. For the other direction, it is immediate to check that, given a regular measure $m\colon\bool\to [0,1]$ then $s_i(a) = \Phi^{-1}(m([a]_{\sim}))$ (for $a\in A_{i}$) is also a regular measure and since $\B$ is injective, Proposition \ref{prop: s faithful su Ai} guarantees that $\Phi^{-1}(m)$ is a faithful state over $\B$.
\end{proof}

Combining \cite[Proposition 3.1.7]{FlaminioKroupa} and Theorem \ref{th: corrispondenza stati ISBL e booleanizzazione} we directly get the following.

\begin{corollary}
The space $\mathcal{S}(\B)$ of states of an involutive bisemilattice $\B$ can be identified (via $\Phi$) with a non-empty compact subspace of $[0,1]^{\bool}$.
\end{corollary}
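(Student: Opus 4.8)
The plan is to deduce the statement almost directly from Theorem \ref{th: corrispondenza stati ISBL e booleanizzazione} together with the fact that the space of finitely additive probability measures over a Boolean algebra is a non-empty compact subset of $[0,1]^{\bool}$. First I would observe that, by definition, every element of $\mathcal{S}(\bool)$ is a map $m\colon\bool\to[0,1]$, so $\mathcal{S}(\bool)$ is literally a subset of the product space $[0,1]^{\bool}$; I equip it with the subspace topology inherited from the product (i.e.\ pointwise convergence) topology, which is the natural topology on a space of states.

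Next I would establish that $\mathcal{S}(\bool)$ is non-empty and compact. Non-emptiness is immediate from Corollary \ref{cor: esistenza dello stato}: since we are working inside $\N$, the Booleanisation $\bool$ collapses neither $0$ nor $1$ and is therefore a non-trivial Boolean algebra, hence carries at least one finitely additive probability measure. For compactness I would invoke \cite[Proposition 3.1.7]{FlaminioKroupa}, applied to $\bool$ viewed as a semisimple MV-algebra; alternatively one argues directly that $\mathcal{S}(\bool)$ is closed in $[0,1]^{\bool}$. Indeed, for each fixed $a\in\bool$ the evaluation map $m\mapsto m(a)$ is continuous on $[0,1]^{\bool}$, so the normalisation condition $\{m : m(1)=1\}$ and, for every pair $a,b$ with $a\wedge b=0$, the additivity condition $\{m : m(a\vee b)-m(a)-m(b)=0\}$ are each closed; the set $\mathcal{S}(\bool)$ is their intersection and hence closed. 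Since $[0,1]^{\bool}$ is compact by Tychonoff's theorem, the closed subspace $\mathcal{S}(\bool)$ is compact as well.

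Finally, I would transport this along the bijection $\Phi\colon\mathcal{S}(\B)\to\mathcal{S}(\bool)$ of Theorem \ref{th: corrispondenza stati ISBL e booleanizzazione}. Because $\Phi$ is a bijection which is state-preserving, $s(b)=\Phi(s)(\pi(b))$, it identifies $\mathcal{S}(\B)$ with $\mathcal{S}(\bool)$, which we have just shown to be a non-empty compact subspace of $[0,1]^{\bool}$. This yields exactly the claimed identification.

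I do not expect a serious obstacle here: once the correspondence of Theorem \ref{th: corrispondenza stati ISBL e booleanizzazione} is in hand, the only point requiring genuine verification is the compactness of $\mathcal{S}(\bool)$, which reduces to checking that the defining conditions of a finitely additive probability measure are closed under pointwise limits and then appealing to Tychonoff. The mild subtlety worth flagging is that the topology on $\mathcal{S}(\B)$ is, by convention, the one pulled back from the product topology on $\mathcal{S}(\bool)$ via $\Phi$, so that the word \emph{identified} is to be read as \emph{homeomorphic onto its image under} $\Phi$.
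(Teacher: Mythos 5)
Your proposal is correct and follows essentially the same route as the paper, whose entire proof is to combine \cite[Proposition 3.1.7]{FlaminioKroupa} with Theorem \ref{th: corrispondenza stati ISBL e booleanizzazione}; your closed-subset-of-Tychonoff-cube argument is just an unpacking of that cited proposition, and your appeal to the standing non-triviality assumption (via Corollary \ref{cor: esistenza dello stato}) for non-emptiness matches the paper's setup.
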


It is natural to wonder under which conditions an involutive bisemilattice $\B$ (whose P\l onka sum representation contains no trivial algebra) carries a faithful state. Theorem \ref{th: corrispondenza stati ISBL e booleanizzazione (caso faithful)} suggests that this might be the case provided that $\B$ is injective and its Booleanisation $\bool$ actually carries a regular probability measure. In general, as observed in \cite{Kelley59}, not every (non-trivial) Boolean algebra carries a regular probability measure (necessary and sufficient conditions for a Boolean algebra to carry a regular measure are stated in \cite[Theorem 4]{Kelley59}). 

It is possible to define the categories of injective involutive bisemilattices and Boolean algebras ``with faithful state'', ``with regular probability measure'', respectively. Objects are pairs $(\B,s)$ and ($\A, m$), where $\B\in\IBSL$, $\A $ is a Boolean algebra, $s$ is a state over $\B$ and $m$ is a (finitely additive) probability measure over $\A$. A morphism between two objects $(\B_{1}, s_{1})$ and $(\B_{2}, s_{2})$ is a homomorphism (between the corresponding algebras in the first component) which preserves the measures, namely $s_{1}(b) = s_{2}(h(b))$, for every $b\in B_{1}$ and every homomorphism $h\colon\B_{1}\to\B_{2}$. It is immediate to check that involutive bisemilattices carrying a faithful state (Boolean algebras carrying a probability measure, resp.) form a category, which we indicate by the pair $\langle\IBSL, \mathcal{S}(\B)\rangle$ ($\langle\BA, \mathcal{S}(\A)\rangle$, resp.).

Let us define a functor $\mathcal{F}\colon \langle\IBSL, \mathcal{S}(\B)\rangle\to \langle\BA, \mathcal{S}(\A)\rangle$, which associates, to each object $(\B,s)$, the object $\mathcal{F}(\B,s) = (\bool, \Phi(s))$, with $\Phi$ defined as in \eqref{ed: definition di Phi} and, to each morphism $h\colon(\B_{1},s_{1})\to(\B_{2},s_{2})$, the morphism $\mathcal{F}(h) = \overline{h} $ with $ \overline{h}\colon\A_{1_{\infty}}\to\A_{2_{\infty}}$, defined as $\overline{h}[a_{1}] = [h(a_{1})]$.  

\begin{theorem}
$\mathcal{F}$ is a covariant functor between the categories of injective involutive bisemilattices with faithful states and Boolean algebras with regular probability measures. 
\end{theorem}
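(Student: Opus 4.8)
The plan is to verify the four defining properties of a covariant functor: that $\mathcal{F}$ carries objects to objects and morphisms to morphisms of the target category, and that it preserves identities and composition. First I would check well-definedness on objects. For $(\B,s)$ with $\B\in\inj$ and $s$ a faithful state, Theorem~\ref{th: corrispondenza stati ISBL e booleanizzazione (caso faithful)} guarantees that $\Phi(s)$ is a regular (finitely additive) probability measure over the Boolean algebra $\bool$, so $\mathcal{F}(\B,s)=(\bool,\Phi(s))$ is genuinely an object of $\langle\BA,\mathcal{S}(\A)\rangle$.

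The main work is to show that $\mathcal{F}$ is well defined on morphisms, that is, that $\overline{h}\colon\A_{1_{\infty}}\to\A_{2_{\infty}}$ given by $\overline{h}([a]_{\sim})=[h(a)]_{\sim}$ is a Boolean homomorphism preserving the measures. Writing $\pi_1\colon\B_1\to\A_{1_{\infty}}$ and $\pi_2\colon\B_2\to\A_{2_{\infty}}$ for the canonical surjections, the prescription for $\overline{h}$ is exactly the equation $\overline{h}\circ\pi_1=\pi_2\circ h$; hence it suffices to show that the homomorphism $\pi_2\circ h$ factors through the congruence $\sim$ on $\B_1$, and the homomorphism theorem then delivers $\overline{h}$ as a (well-defined) homomorphism at once. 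The crucial observation is that the partition function $x\cdot y\coloneqq x\wedge(x\vee y)$ is a term, hence preserved by both $h$ and $\pi_2$, while $\A_{2_{\infty}}$, being a Boolean algebra, satisfies absorption, so that $u\cdot v=u$ for all $u,v\in\A_{2_{\infty}}$. Thus, for $a\in A_i$, any $k\geq i$ and any $c\in A_k$ (components of $\B_1$), recalling from Theorem~\ref{th: Teorema di Plonka}-(3) that $p_{ik}(a)=a\cdot c$, one computes
\[
\pi_2(h(p_{ik}(a)))=\pi_2(h(a)\cdot h(c))=\pi_2(h(a))\cdot\pi_2(h(c))=\pi_2(h(a)).
\]
Since $a\sim b$ holds precisely when $p_{ik}(a)=p_{jk}(b)$ for a common upper bound $k$ of $i,j$ (with $a\in A_i$, $b\in A_j$), this yields $\pi_2(h(a))=\pi_2(h(p_{ik}(a)))=\pi_2(h(p_{jk}(b)))=\pi_2(h(b))$, the required factorisation.

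Once $\overline{h}$ is a well-defined homomorphism, measure preservation follows directly from the defining equation~\eqref{ed: definition di Phi}: for $a\in A_i$,
\[
\Phi(s_2)(\overline{h}([a]_{\sim}))=\Phi(s_2)([h(a)]_{\sim})=s_2(h(a))=s_1(a)=\Phi(s_1)([a]_{\sim}),
\]
where the third equality uses that $h$ is a morphism of the source category. Functoriality is then a direct unwinding of definitions: $\overline{\mathrm{id}_{\B}}([a]_{\sim})=[a]_{\sim}$ gives $\mathcal{F}(\mathrm{id})=\mathrm{id}$, and $\overline{g\circ h}([a]_{\sim})=[g(h(a))]_{\sim}=\overline{g}(\overline{h}([a]_{\sim}))$ gives $\mathcal{F}(g\circ h)=\mathcal{F}(g)\circ\mathcal{F}(h)$; as the direction of the arrows is preserved, $\mathcal{F}$ is covariant. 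I expect the only genuine obstacle to be the factorisation step: recognising that absorption in $\A_{2_{\infty}}$ collapses the partition function and thereby identifies each element of $\B_1$ with its P\l onka images after passing through $\pi_2\circ h$. The remaining verifications are routine.
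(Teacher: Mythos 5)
Your proof is correct, and on the key step it is actually more careful than the paper's own argument. The paper proves the statement by direct computation on representatives: it checks that $\overline{h}([a]\wedge[b])=[h(a)]\wedge[h(b)]$ (stating the other operations are analogous) and that $\Phi(s_{1})([a])=\Phi(s_{2})(\overline{h}[a])$, but it never verifies that $\overline{h}[a]\coloneqq[h(a)]$ is independent of the chosen representative of the class $[a]_{\sim}$; well-definedness is left implicit. Your factorisation argument supplies exactly this missing piece: since the partition function $x\cdot y= x\wedge(x\vee y)$ is a term, it is preserved by the homomorphisms $h$ and $\pi_{2}$, and absorption in the Boolean algebra $\A_{2_{\infty}}$ collapses it, so $\pi_{2}(h(p_{ik}(a)))=\pi_{2}(h(a))$; hence $\pi_{2}\circ h$ is constant on $\sim$-classes, and the homomorphism theorem delivers $\overline{h}$ as a well-defined homomorphism in one stroke rather than operation by operation. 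You also record the identity and composition axioms explicitly, which the paper omits; it closes instead by noting that the square relating $h$, $\overline{h}$ and the two projections commutes, which is precisely your defining equation $\overline{h}\circ\pi_{1}=\pi_{2}\circ h$. Both proofs rest on the same two pillars, namely that the object part follows from the state/measure correspondence theorems and that measure preservation is the same one-line computation using that $h$ preserves states, so the difference is localised in the morphism step, where your route buys genuine rigour (well-definedness) at essentially no extra cost.
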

\begin{proof}
Let $(\B,s)$ an object in $\langle\IBSL, \mathcal{S}(\B)\rangle$. Then, by Theorem \ref{th: corrispondenza stati ISBL e booleanizzazione}, $\mathcal{F}(\B,s) = (\bool, \Phi(s))$ is an object in $\langle\BA, \mathcal{S}(\A)\rangle$. We have to prove that, for every homomorphism $h\colon(\B_{1},s_{1})\to(\B_{2},s_{2})$ preserving states, the map $ \overline{h}\colon(\A_{1_{\infty}}, \Phi(s_{1}))\to(\A_{2_{\infty}}, \Phi(s_{2}))$ is a Boolean homomorphism, which preserves the (finitely additive) probability measures.  
To see that $\overline{h}$ is indeed a Boolean homomorphism, we show just one case (all the others are proved analogously). Let $[a],[b]\in\A_{1_{\infty}}$, then $\overline{h}([a]\wedge[b]) = \overline{h}([a\wedge b]) = [h(a\wedge b)] = [h(a)\wedge h(b)] = [h(a)]\wedge [h(b)] = \overline{h}(a)\wedge\overline{h}(b)$. 
Moreover, $\overline{h}$ preserves the states. Indeed let $[a_{1}]\in A_{1_{\infty}}$ and $\Phi(s_{1})\in \mathcal{S}(\A_{1_{\infty}})$. Then $\Phi(s_{1})([a_{1}]) = s_{1}(a_{1}) = s_{2}(h(a_{1})) = \Phi(s_{2})([h(a_{1})]) = \Phi(s_{2})(\overline{h}[a_{1}])$, where we have used the fact that $h$ preserves states.
Finally, it follows from the definition of $\overline{h}$ that the following diagram is commutative and this concludes the proof of our claim.
\begin{center}
\begin{figure}[h]
\begin{tikzpicture}[scale=0.8]
\draw (-4,0) node {$(\A_{1_{\infty}}, \Phi(s_{1}))$};
	\draw (4,0) node {$(\A_{2_{\infty}}, \Phi(s_{2}))$};
	
	\draw [line width=0.8pt, ->] (-2,0) -- (2,0);
	\draw (0,-0.4) node {\begin{footnotesize}$\overline{h}$\end{footnotesize}};
	
	\draw [line width=0.8pt, <-] (2.8,3) -- (-2.8,3);
	\draw (0, 3.3) node {\begin{footnotesize}$h$\end{footnotesize}};

	\draw (-4,3) node {$(\B_{1},s_{1})$}; 
	
	\draw (4,3) node {$(\B_{2},s_{2})$};
		
	\draw [line width=0.8pt, ->] (-4,2.6) -- (-4,0.4);
	\draw (-4.8,1.7) node {\begin{footnotesize}$(\pi,\Phi)$\end{footnotesize}};
	\draw (4.8,1.7) node {\begin{footnotesize}$(\pi,\Phi)$\end{footnotesize}};
	\draw [line width=0.8pt, <-] (4,0.4) -- (4,2.6);
	
\end{tikzpicture}
\end{figure}
\end{center}
\end{proof}

The functor $\mathcal{F}$ admits many adjoints, depending on the number of injective involutive bisemilattices having the same Booleanisation. \\

\noindent
\textbf{Problem:} Characterise all the injective involutive bisemilattices having the same Booleanisation.

The problem is not of easy solution. Taking up the suggestion of an anonymous reviewer, we have drawn some considerations about it in Appendix B.


\section{States, metrics and completion}\label{Sec: metric and completition}

\begin{definition}\label{def: pseudo-metric}
Let $X$ be a set. A \emph{pseudometric} on $X$ is a map $d\colon X\times X\to \mathbb{R}$ such that: 
\begin{enumerate}
\item $d(x,y)\geq 0$,
\item $d(x,x) = 0 $,
\item $d(x,y) = d(y, x)$, 
\item $ d(x,z)\leq d(x,y) + d(y,z) $ (triangle inequality),
\end{enumerate}
for all $x,y,z\in X $.
\end{definition}

A pair $(X,d)$ given by a set with a pseudometric $d$ is called \emph{pseudometric space}. A pseudometric $d$, on $X$, is a \emph{metric}, if $d(x,y) = 0$ implies that $x = y$. Notice that, as for metric spaces, the triangle inequality is enough to show that the pseudo-metric $d$ is continuous (a fact that we will use in several proofs), when $X$ is topologised with the topology induced by the pseudo-metric (with no explicit mention).

Let us recall that in any Boolean algebra $\A$ it is possible to define the \emph{symmetric difference} $\vartriangle\colon A\times A\to A$, $a\vartriangle b\coloneqq (a\wedge b')\vee(a'\wedge b)$. The presence of a (finitely additive) probability measure $m$ over a Boolean algebra $\A$ allows to define a pseudo-metric $d\coloneqq m\circ\vartriangle$ on $\A$, which becomes a metric, in case $m$ is regular (see \cite{Kolmogorov} for details).


The symmetric difference can be (analogously) defined also for an involutive bisemilattice $\B$. Let $a,b\in B$, with $\B\cong\PL(\A_i )$, $a\in A_{i}$ and $b\in A_{j}$, for some $i,j\in I$. Then $a\vartriangle b = (a\wedge^{\B}b')\vee^{\B}(a'\wedge^{\B} b) = p_{ik}(a)\vartriangle^{\A_{k}}p_{jk}(b)$, where $k=i\vee j$. Clearly, in case $\B$ carries a state $s$, then one can define the map $d_{s}\colon\B\to [0,1] $ as:
\begin{equation}\label{eq: definizione ds}
d_{s} \coloneqq s\circ \vartriangle.
\end{equation}

We will sometimes refer to the (pseudo)metrics induced by states and finitely additive probability measures as distances.

\begin{proposition}\label{prop: pseudo-metrica su IBSL}
Let $\B$ be an involutive bisemilattice carrying a state $s$. Then $d_{s} $ is a pseudo-metric on $B$. 
\end{proposition}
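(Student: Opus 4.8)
The plan is to check the four conditions of Definition \ref{def: pseudo-metric} directly from the definition $d_{s} = s \circ \vartriangle$, treating the first three as routine and concentrating on the triangle inequality. Non-negativity is immediate, since $s$ takes values in $[0,1]$. For $d_{s}(a,a)=0$, note that if $a \in A_{i}$ then $a \vartriangle a$ is computed inside $\A_{i}$ (as $i \vee i = i$), so $a \vartriangle a = a \vartriangle^{\A_{i}} a = 0_{i}$, whence $d_{s}(a,a) = s(0_{i}) = 0$ by Proposition \ref{prop: basic properties}. Symmetry follows because, for $a \in A_{i}$ and $b \in A_{j}$ with $k = i \vee j$, we have $a \vartriangle b = p_{ik}(a) \vartriangle^{\A_{k}} p_{jk}(b)$ and the Boolean symmetric difference $\vartriangle^{\A_{k}}$ is itself symmetric, so $a \vartriangle b = b \vartriangle a$ and hence $d_{s}(a,b) = d_{s}(b,a)$.

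The substantive point is the triangle inequality, and the key idea is to push any three given elements up into a single common Boolean component, where the statement becomes the classical inequality for a finitely additive probability measure. Fix $a \in A_{i}$, $b \in A_{j}$, $c \in A_{l}$ and set $m = i \vee j \vee l$. I first establish the lifting identity
\[
d_{s}(a,b) = s_{m}\big(p_{im}(a) \vartriangle^{\A_{m}} p_{jm}(b)\big),
\]
together with its analogues for the pairs $(a,c)$ and $(b,c)$. To see this, let $n = i \vee j$, so that $a \vartriangle b = p_{in}(a)\vartriangle^{\A_{n}} p_{jn}(b)$ lies in $A_{n}$; since $n \leq m$ and $p_{nm}$ preserves the measures (Proposition \ref{prop: restrizione su Ai e uno stato}), we get $s(a\vartriangle b) = s_{n}(a \vartriangle b) = s_{m}(p_{nm}(a \vartriangle b))$. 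As $p_{nm}$ is a Boolean homomorphism it commutes with $\vartriangle$, and using $p_{nm}\circ p_{in} = p_{im}$ and $p_{nm}\circ p_{jn} = p_{jm}$ one obtains $p_{nm}(a\vartriangle b) = p_{im}(a)\vartriangle^{\A_{m}} p_{jm}(b)$, which yields the identity.

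With these identities in hand, write $\alpha = p_{im}(a)$, $\beta = p_{jm}(b)$ and $\gamma = p_{lm}(c)$, all elements of the single Boolean algebra $\A_{m}$ on which $s_{m}$ is a finitely additive probability measure (Proposition \ref{prop: restrizione su Ai e uno stato}). The triangle inequality for $d_{s}$ then reduces to
\[
s_{m}(\alpha \vartriangle^{\A_{m}}\gamma) \leq s_{m}(\alpha \vartriangle^{\A_{m}}\beta) + s_{m}(\beta \vartriangle^{\A_{m}}\gamma),
\]
which is precisely the triangle inequality for the pseudometric $s_{m} \circ \vartriangle^{\A_{m}}$ induced by a finitely additive probability measure on a Boolean algebra; this is the classical fact recalled before the statement (cf. \cite{Kolmogorov}), following from $\alpha \vartriangle \gamma \leq (\alpha \vartriangle \beta) \vee (\beta \vartriangle \gamma)$ together with monotonicity and finite subadditivity of $s_{m}$. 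The main obstacle, and the only place where the P\l onka sum structure genuinely enters, is the lifting identity of the second paragraph: one must verify that the state value of a symmetric difference is unchanged when its arguments are transported into a larger component, which rests on measure-preservation of the connecting homomorphisms and on the fact that these homomorphisms, being Boolean, commute with $\vartriangle$.
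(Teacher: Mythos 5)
Your proof is correct and follows essentially the same route as the paper: both arguments dispose of conditions (1)--(3) directly and prove the triangle inequality by transporting all three elements into the single Boolean component indexed by the join of their indices, using measure-preservation of the connecting homomorphisms (Proposition \ref{prop: restrizione su Ai e uno stato}) to establish the lifting identity, and then invoking the classical triangle inequality for the pseudometric induced by a finitely additive probability measure on a Boolean algebra. Your write-up is if anything slightly more explicit than the paper's, since you spell out that the lifting works because Boolean homomorphisms commute with $\vartriangle$ and compose according to $p_{nm}\circ p_{in}=p_{im}$, steps the paper carries out inline within its displayed computation.
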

\begin{proof}
(1) obviously holds, since $s\colon\B\to [0,1]$. \\
\noindent
(2) Let $a\in A_{i}$, for some $i\in I$. Then $ d_{s}(a,a) = s((a\land a')\vee (a'\land a))= s(0_{i}\vee 0_{i}) = s (0_{i})= 0$, by Proposition \ref{prop: basic properties}.\\
\noindent
(3) holds since $\vee$ and $\land$ are commutative operations. \\
\noindent
(4) Let $a\in  A_{i}$, $b\in A_{j}$ and $c\in A_{k}$ with $i\neq j \neq k$. Preliminarily, observe that
\begin{align*}
d_{s} (a,c) & = s((a\land^{\B}c')\vee^{\B} (a'\land^{\B} c)) \\
& = s((p_{i,i\vee k}(a)\land^{\A_{i\vee k}} p_{k,i\vee k}(c)')\vee^{\A_{i\vee k}} (p_{i,i\vee k}(a)'\land^{\A_{i\vee k}} p_{k,i\vee k}(c))) \\
& = s_{i\vee k}((p_{i,i\vee k}(a)\land^{\A_{i\vee k}} p_{k,i\vee k}(c)')\vee^{\A_{i\vee k}} (p_{i,i\vee k}(a)'\land^{\A_{i\vee k}} p_{k,i\vee k}(c)))  \\ 
& = s_{i\vee k\vee j} (p_{i\vee k ,i\vee k \vee j}((p_{i,i\vee k}(a)\land^{\A_{i\vee k}} p_{k,i\vee k}(c)')\vee^{\A_{i\vee k}} (p_{i,i\vee k}(a)'\land^{\A_{i\vee k}} p_{k,i\vee k}(c)))) & \text{(Prop. \ref{prop: restrizione su Ai e uno stato}) } \\
& = s_{i\vee k\vee j} (a\vartriangle^{\A_{i\vee k\vee j}}c). 
\end{align*}

Similarly, it is possible to show that $ d_{s}(a,b) = s_{i\vee k\vee j} (a\vartriangle^{\A_{i\vee k\vee j}}b) $ and $ d_{s}(b,c) = s_{i\vee k\vee j}(b\vartriangle^{\A_{i\vee k\vee j}} c) $. Hence $d_{s}(a,c)= s_{i\vee k\vee j} (a\vartriangle^{\A_{i\vee k\vee j}} c) \leq s_{i\vee k\vee j} (a \vartriangle^{\A_{i\vee k\vee j}} b) + s_{i\vee k\vee j} (b \vartriangle^{\A_{i\vee k\vee j}}c) = s\circ d (a,b) + s\circ d (b,c) = d_{s}(a,b) + d_{s}(b,c)$.
\end{proof}

\begin{remark}
Recall that, for Boolean algebras (and, more in general, MV-algebras \cite{metricMV}), the choice of a regular probability measure $m$ (or, a faithful state in the case of MV-algebras) implies that the induced pseudometric $d_{m}$ is indeed a metric. This fact does not hold for involutive bisemilattices, where the choice of a faithful state $s$ is not enough to guarantee that $d_{s}$ is a metric. Indeed, consider any $\B\in\N$ and two distinct elements $a,b\in B$, $a\in A_{i}$, $b\in B_{j}$, such that $p_{i,i\vee j}(a) = p_{j,i\vee j}(b)$. Then $d_{s}(a,b) = s (a\vartriangle^{\B}b) = s(p_{i,i\vee j}(a) \vartriangle^{\A_{i\vee j}} p_{j,i\vee j}(b)) = s(0_{i\vee j}) = 0$; however, $a\neq b$. Notice, moreover, that $a\sim b$, i.e. they belong to the same equivalence class in the Booleanisation $\bool$. Indeed, it is easy to check that $d_{s}$ is a metric over $\B$ if and only if $\B=\bool$. 
\end{remark}

We now recall a well known fact about Boolean algebras.

\begin{lemma}\label{lem: fatti noti algebre di Boole con misura}
Let $\A$ be a Boolean algebra carrying a finitely additive probability measure $m$ and let $d_{m}$ be the pseudometric ($d_{m}=m\circ\Delta$) defined via $m$. Then:
\begin{enumerate}
\item $d_{m}(a,b) = d_{m}(a',b')$;
\item $d_{m}(a\vee b,c\vee d) \leq d_{m}(a,c) + d_{m}(b,d)$, 
\end{enumerate}
for any $a,b,c,d\in A$.
\end{lemma}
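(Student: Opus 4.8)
The plan is to prove both identities directly from the definition $d_m = m \circ \vartriangle$, reducing each to a statement about the symmetric difference operation in the Boolean algebra $\A$ together with the additivity of $m$. For item (1), I would exploit the fact that the symmetric difference is self-complementary in the sense that $a \vartriangle b = a' \vartriangle b'$. Indeed, unfolding the definition, $a' \vartriangle b' = (a' \wedge (b')') \vee ((a')' \wedge b') = (a' \wedge b) \vee (a \wedge b')$, which is exactly $a \vartriangle b$ after commuting the two joinands. Applying $m$ to both sides gives $d_m(a,b) = m(a \vartriangle b) = m(a' \vartriangle b') = d_m(a',b')$, so (1) is immediate and requires no measure-theoretic input beyond well-definedness.

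For item (2), the key algebraic fact is the \emph{subadditivity} of the symmetric difference with respect to $\vee$, namely that $(a \vee b) \vartriangle (c \vee d) \leq (a \vartriangle c) \vee (b \vartriangle d)$ holds in every Boolean algebra, where $\leq$ is the Boolean order. This is a standard Boolean-algebra inequality: any element witnessing the left-hand difference must differ in one of the two coordinates, and a routine check on the join shows it is dominated by the join of the two coordinatewise differences. Granting this inequality, I would then use two properties of $m$: monotonicity (if $x \leq y$ then $m(x) \leq m(y)$, which follows from additivity applied to $y = x \vee (y \wedge x')$ with $x \wedge (y \wedge x') = 0$) and subadditivity of $m$ on joins (that $m(x \vee y) \leq m(x) + m(y)$, again from additivity after writing $x \vee y = x \vee (y \wedge x')$). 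Chaining these:
\[
d_m(a \vee b, c \vee d) = m\bigl((a \vee b) \vartriangle (c \vee d)\bigr) \leq m\bigl((a \vartriangle c) \vee (b \vartriangle d)\bigr) \leq m(a \vartriangle c) + m(b \vartriangle d) = d_m(a,c) + d_m(b,d),
\]
which is precisely (2).

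The only genuine obstacle is verifying the Boolean inequality $(a \vee b) \vartriangle (c \vee d) \leq (a \vartriangle c) \vee (b \vartriangle d)$, but this is entirely routine in a Boolean algebra and I would either invoke it as well known (it is the standard fact underlying the continuity of $\vee$ in measure-algebra metrics, cf.\ the reference \cite{Kolmogorov} already cited for the pseudometric construction) or dispatch it by a short calculation distributing the symmetric difference. Everything else—monotonicity and subadditivity of $m$—follows from finite additivity exactly as recorded in the Boolean-algebra case, so no new ideas are needed. I would present item (1) first since it is a one-line symmetry observation, then item (2), isolating the Boolean inequality as the single nontrivial ingredient.
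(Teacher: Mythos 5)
Your proof is correct, but it takes a genuinely different route from the paper's. The paper does not argue from scratch: it simply observes that every Boolean algebra is an MV-algebra in which $\oplus$ coincides with $\vee$, and then adapts Lemma 3.1 of \cite{metricMV}, where both inequalities are proved for MV-algebras equipped with a state. Your argument instead stays entirely inside elementary Boolean algebra: item (1) reduces to the identity $a' \vartriangle b' = a \vartriangle b$, and item (2) to the lattice inequality $(a \vee b) \vartriangle (c \vee d) \leq (a \vartriangle c) \vee (b \vartriangle d)$ combined with monotonicity and subadditivity of $m$, both of which you correctly derive from finite additivity via the disjoint decomposition $x \vee y = x \vee (y \wedge x')$. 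The one ingredient you leave semi-implicit---the displayed Boolean inequality---is indeed standard and causes no gap: it can be dispatched by a direct computation, or checked pointwise in a field of sets and transferred by Stone representation. What your approach buys is self-containedness: the reader needs no MV-algebraic background and no external reference. What the paper's approach buys is brevity and generality (the cited lemma covers all MV-algebras with states), at the cost of requiring the reader to unwind the reference and the translation $\oplus \mapsto \vee$ to see that the adaptation is licit.
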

\begin{proof}
The proof can be adapted from \cite[Lemma 3.1]{metricMV}, observing that Boolean algebras are MV-algebras where the operations $\oplus$ and $\vee$ coincide.
\end{proof}

We decide to give explicit proofs of all the following results in the particular case of pseudometric (injective) involutive bisemilattices (and relative Booleanisations), although some of them could be derived from the general theory of pseudometric spaces (see for instance \cite{Kelleybook}).

In analogy to what is done in \cite{metricMV} for MV-algebras, in the remaining part of this section we study the metric completion for involutive bisemilattices. The completion is a standard construction for metric spaces (see \cite{Kelleybook}), which can be analogously applied to pseudo-metric spaces. 

Recall that a (psudo)metric space is \emph{complete} if every Cauchy sequence is convergent. Given a pseudometric space $(X,d)$, a completion $(\widehat{X},\widehat{d})$ of $(X,d)$ is such that: 
\begin{enumerate}
\item $(\widehat{X},\widehat{d})$ is complete; 
\item there exists an isometric injective map $j\colon X\to\widehat{X} $ such that $j(X)$ is dense in $\widehat{X}$. 
\end{enumerate}

Observe that the second condition means that, for every $\widehat{x}\in\widehat{X}$, there exists a sequence $x_{n}\in X$ such that $j(x_n)\to \widehat{x}$.

In the sequel, we can assume, up to isometry, that the embedding $j$ is the natural inclusion $X \hookrightarrow\widehat{X}$. Notice that the completion $(\widehat{X}, \widehat{d})$ is uniquely determined, by the above conditions (1)-(2), up to isometries.

We proceed in the same way for involutive bisemilattices. In detail, given $(\B,d_{s})$ a pair where $\B$ is an involutive bisemilattice (carrying a state $s$) and $d_{s}$ the pseudometric induced by the state $s$, we can associate to it, on the one hand, the completion $(\widehat{\B},\widehat{d_{s}})$; on the other hand, we can consider its P\l onka sum representation $\PL(\A_{i})$. By Proposition \ref{prop: restrizione su Ai e uno stato}, for every $i\in I$, $(\A_{i},d_{s_i})$ is a pseudo-metric space (since $s_{i}$ is a probability measure over $\A_{i}$), which is metric in case $s$ is faithful (see Proposition \ref{prop: s faithful su Ai}). Therefore, it makes sense to consider the pseudo-metric space $(\widetilde{\B},\widetilde{d})$, where $\widetilde{\B}=\PL(\widehat{\A}_{i})_{i\in I}$ (the P\l onka sum of the completions of the Boolean algebras\footnote{The fact that the completion of a Boolean algebra is still a Boolean algebra is a routine exercise: one may follow the strategy applied to MV-algebras in \cite{metricMV} (using the fact that  Boolean algebras are MV-algebras where $\oplus$ is idempotent).} in the system representing $\B$), $\widetilde{d}(\widetilde{a},\widetilde{b}) = \lim_{n\to\infty} d_{s}(a_{n},b_{n})$, where $\widetilde{a}\in\widehat{A}_{i}$, $\widetilde{b}\in\widehat{A}_{j}$ (for some $i,j\in I$), and $a_{n},b_{n}$ are sequences (of elements) in $A_{i}, A_{j}$, respectively, such that $a_{n}\to\widetilde{a}$, $b_{n}\to\widetilde{b}$.
We are going to show (see Theorem \ref{th: sulle completitions} below) that $\widetilde{\B}\in\IBSL $ and, moreover, that $(\widetilde{\B},\widetilde{d})$ is the completion of $(\B,d_{s})$.


\begin{lemma}\label{lemma: omomorfismi tra completizzazione di algebre di Boole}
Let $(\A_{1},d_{1}) $, $ (\A_{2},d_{2})$ be two Boolean algebras with distance (induced by a probability measure) and $h\colon\A_{1}\to \A_{2}$ be a distance preserving homomorphism. Then there exists a distance preserving homomorphism  
$\widehat{h}\colon\widehat{\A}_{1}\to\widehat{\A}_{2}$ such that $\widehat{h}_{|\A_{1}} = h$.
\end{lemma}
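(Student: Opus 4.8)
The plan is to invoke the standard principle that a distance preserving (hence uniformly continuous) map between metric spaces extends uniquely to their completions, and then to check that this extension respects the Boolean structure by continuity. First I would define $\widehat{h}\colon\widehat{\A}_{1}\to\widehat{\A}_{2}$ by setting, for $\widehat{a}\in\widehat{\A}_{1}$ the limit of a Cauchy sequence $(a_{n})\subseteq A_{1}$,
$$\widehat{h}(\widehat{a})\coloneqq\lim_{n\to\infty}h(a_{n}).$$

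Next I would verify that this is well posed. Because $h$ is distance preserving, $d_{2}(h(a_{n}),h(a_{m}))=d_{1}(a_{n},a_{m})\to 0$, so $(h(a_{n}))$ is Cauchy in $\widehat{\A}_{2}$ and converges there by completeness. If both $(a_{n})$ and $(b_{n})$ converge to $\widehat{a}$, then $d_{1}(a_{n},b_{n})\to 0$ and hence $d_{2}(h(a_{n}),h(b_{n}))=d_{1}(a_{n},b_{n})\to 0$, so the two images share the same limit; thus $\widehat{h}$ is independent of the representing sequence. Taking constant sequences gives $\widehat{h}_{|\A_{1}}=h$. Distance preservation of $\widehat{h}$ then follows from continuity of the (pseudo)metric: for $a_{n}\to\widehat{a}$ and $b_{n}\to\widehat{b}$,
$$\widehat{d}_{2}(\widehat{h}(\widehat{a}),\widehat{h}(\widehat{b}))=\lim_{n\to\infty}d_{2}(h(a_{n}),h(b_{n}))=\lim_{n\to\infty}d_{1}(a_{n},b_{n})=\widehat{d}_{1}(\widehat{a},\widehat{b}).$$

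It remains to show that $\widehat{h}$ is a Boolean homomorphism, and the key input here is that the Boolean operations are continuous on the completions — exactly what Lemma \ref{lem: fatti noti algebre di Boole con misura} supplies. By Lemma \ref{lem: fatti noti algebre di Boole con misura}(2) one has $d(a\vee b,c\vee d)\le d(a,c)+d(b,d)$, so $\vee$ is jointly continuous; in particular $a_{n}\vee b_{n}\to\widehat{a}\vee\widehat{b}$ whenever $a_{n}\to\widehat{a}$ and $b_{n}\to\widehat{b}$, and similarly in $\widehat{\A}_{2}$. Hence, using that $h$ is a homomorphism,
$$\widehat{h}(\widehat{a}\vee\widehat{b})=\lim_{n\to\infty}h(a_{n}\vee b_{n})=\lim_{n\to\infty}\bigl(h(a_{n})\vee h(b_{n})\bigr)=\widehat{h}(\widehat{a})\vee\widehat{h}(\widehat{b}).$$
Lemma \ref{lem: fatti noti algebre di Boole con misura}(1), namely $d(a',b')=d(a,b)$, shows that $'$ is continuous (indeed isometric), yielding $\widehat{h}(\widehat{a}')=\widehat{h}(\widehat{a})'$ by the same argument; the case of $\wedge$ follows from these two (or is treated directly), and $\widehat{h}(0)=h(0)=0$, $\widehat{h}(1)=h(1)=1$ are immediate.

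I expect the only non-formal point — the genuine obstacle — to be guaranteeing that the Boolean operations pass to the limit, i.e. their continuity on $\widehat{\A}_{i}$; once Lemma \ref{lem: fatti noti algebre di Boole con misura} is invoked this becomes routine. Uniqueness of $\widehat{h}$ subject to $\widehat{h}_{|\A_{1}}=h$ is then automatic, since $\A_{1}$ is dense in $\widehat{\A}_{1}$ and $\widehat{h}$ is continuous.
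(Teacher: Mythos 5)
Your proposal is correct and follows essentially the same route as the paper's proof: define $\widehat{h}$ on limits of Cauchy sequences, use the isometry of $h$ for well-posedness and distance preservation, and invoke Lemma \ref{lem: fatti noti algebre di Boole con misura} to guarantee that negation and join pass to the limit, so that the homomorphism property of $h$ transfers to $\widehat{h}$. The only cosmetic difference is that you phrase the key step as joint continuity of the Boolean operations on the completions, while the paper phrases it as continuity of $\widehat{h}$ together with the same two inequalities; these are interchangeable.
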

\begin{proof}
Let $\widehat{a}\in\widehat{A}_{1}$ and define $\widehat{h}\colon\widehat{\A}_{1}\to\widehat{\A}_{2}$ as 
\[
\widehat{h}(\widehat{a})\coloneqq \lim_{n\to \infty}h(a_{n}), 
\]
where $a_{n}\to \widehat{a}$ is a sequence of (elements of) $A_{1}$ convergent to $\widehat{a}$. Observe that $\widehat{h}$ is well defined. Indeed, suppose that $a'_{n}\to \widehat{a}$, i.e. $a'_{n}$ is a different sequence convergent to the element $\widehat{a}$. Then $\lim_{n\to\infty}\di_{2}(h(a_{n}), h(a'_{n})) = \lim_{n\to\infty}d_{2}(h(a_{n}), h(a'_{n}))=\lim_{n\to\infty}d_{1}(a_{n}, a'_{n}) = \di_{1}(\widehat{a}, \widehat{a}) = 0$, where the second equality is justified by the fact that $h$ is an isometry (distance preserving map). This shows that $\lim_{n\to\infty}h(a_{n}) = \lim_{n\to\infty}h(a'_{n})$. Moreover, it follows by construction that $\widehat{h}_{|\A_{1}} = h$. It only remains to show that $\widehat{h}$ is distance preserving and a Boolean homomorphism. Let $\widehat{a},\widehat{b}\in\widehat{A}_{1}$. Then 
\begin{align*}
\di_{2}(\widehat{h}(\widehat{a}),\widehat{h}(\widehat{b}))& =  \di_{2}(\lim_{n\to\infty} h(a_{n}), h(b_{n}))\\ 
& = \lim_{n\to\infty} \di_{2}(h(a_{n}), h(b_{n})) & (\di_{2}\text{ is continuous}) \\
& = \lim_{n\to\infty} d_{2}(h(a_{n}), h(b_{n})) & \\
& = \lim_{n\to\infty} d_{1}(a_{n}, b_{n}) & (h \text{ isometry}) \\
& = \di_{1}(\widehat{a}, \widehat{b}).
\end{align*}
To see that $\widehat{h}$ is a Boolean homomorphism, we only show the cases of negation and one of the two binary operations. 
\begin{align*}
\widehat{h}(\widehat{a'})& =  \widehat{h}(\lim_{n\to\infty} a_{n}')\\ 
& = \lim_{n\to\infty} \widehat{h} (a_{n}') & (\widehat{h} \text{ is continuous}) \\
& = \lim_{n\to\infty} h (a_{n}') & (\widehat{h}_{|\A_{1}} = h) \\
& = \lim_{n\to\infty} h (a_{n})' & (h \text{ homomorphism}) \\
& = \widehat{h}(\widehat{a})',
\end{align*}
where the last equality holds, since, by Lemma \ref{lem: fatti noti algebre di Boole con misura}-(1), $\di_{2}(h(a_{n})',\widehat{h}(\widehat{a})') = \di_{2}(h(a_{n}),\widehat{h}(\widehat{a})) = 0  $. We reason similarly for $\vee$:

\begin{align*}
\widehat{h}(\widehat{a}\vee \widehat{b})& =  \widehat{h}(\lim_{n\to\infty} a_{n}\vee b_{n})\\ 
& = \lim_{n\to\infty} \widehat{h} (a_{n}\vee b_{n}) & (\widehat{h} \text{ is continuous}) \\
& = \lim_{n\to\infty} h (a_{n}\vee b_{n}) & (\widehat{h}_{|\A_{1}} = h) \\
& = \lim_{n\to\infty} h (a_{n})\vee h( b_{n}) & (h \text{ homomorphism}) \\
& = \widehat{h}(\widehat{a})\vee\widehat{h}( \widehat{b}),
\end{align*}
where the last equality holds, since, by Lemma \ref{lem: fatti noti algebre di Boole con misura}-(2), $\di_{2}(h(a_{n})\vee h(b_{n}),\widehat{h}(\widehat{a})\vee \widehat{h}(\widehat{b})) \leq \di_{2}(h(a_{n}), \widehat{h}(\widehat{a})) +\di_{2}(h(b_{n}),\widehat{h}(\widehat{b}) ) = 0.  $
\end{proof}

\noindent
\textbf{Notation:} given the pseudo-metric space $(\B,d_{s})$, where $\B\in\N$ and $d_{s}$ is the pseudo-metric induced by a state $s$, we indicate by $d_{\infty}$ (instead of $d_{\Phi(s)}$) the pseudo-metric on its Booleanisation, obtained via the bijection in Theorem \ref{th: corrispondenza stati ISBL e booleanizzazione}. 

\begin{lemma}\label{lemma: B completo sse bool e completa}
Let $\B\in\IBSL$ carrying a state $s$. Then $(\B,d_{s})$ is complete if and only if $(\bool,d_{\infty})$ is complete.
\end{lemma}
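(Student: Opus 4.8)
The plan is to reduce everything to a single identity relating the two pseudometrics through the canonical projection $\pi\colon\B\to\bool$, $\pi(a)=[a]_{\sim}$. First I would record that $\pi$ is a surjective homomorphism (as observed in the Remark following the definition of the Booleanisation) and that, by Theorem \ref{th: corrispondenza stati ISBL e booleanizzazione}, the state factors as $s = \Phi(s)\circ\pi$. Since $\vartriangle$ is a term operation and $\pi$ is a homomorphism, $\pi(a\vartriangle b) = \pi(a)\vartriangle\pi(b)$, whence
\[
d_{s}(a,b) = s(a\vartriangle b) = \Phi(s)(\pi(a\vartriangle b)) = \Phi(s)([a]_{\sim}\vartriangle [b]_{\sim}) = d_{\infty}(\pi(a),\pi(b))
\]
for all $a,b\in B$. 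Thus $\pi$ is a distance-preserving surjection from $(\B,d_{s})$ onto $(\bool,d_{\infty})$, and both completeness statements will follow from this.

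For the implication that completeness of $(\B,d_{s})$ forces completeness of $(\bool,d_{\infty})$, I would start from a Cauchy sequence $([a_{n}]_{\sim})_{n}$ in $\bool$ and use surjectivity of $\pi$ to pick representatives $a_{n}\in B$ with $\pi(a_{n}) = [a_{n}]_{\sim}$. The displayed identity turns the Cauchy condition on $([a_{n}]_{\sim})_{n}$ into the Cauchy condition on $(a_{n})_{n}$ in $(\B,d_{s})$; by hypothesis $a_{n}\to a$ for some $a\in B$, and applying the identity once more gives $d_{\infty}([a_{n}]_{\sim},[a]_{\sim}) = d_{s}(a_{n},a)\to 0$, so the original sequence converges.

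For the converse, starting from a Cauchy sequence $(a_{n})_{n}$ in $(\B,d_{s})$, the identity shows that $(\pi(a_{n}))_{n}$ is Cauchy in $(\bool,d_{\infty})$; completeness of $\bool$ yields a limit $[c]_{\sim}$, and choosing any representative $c\in B$ I obtain $d_{s}(a_{n},c) = d_{\infty}(\pi(a_{n}),[c]_{\sim})\to 0$, so $a_{n}\to c$ in $\B$. Neither direction requires a distinguished lift nor any care about the pseudometric being degenerate, precisely because $\pi$ preserves distances \emph{exactly} rather than merely contracting them. The only point demanding attention is the very first identity: one must check that $\pi$ commutes with $\vartriangle$ and that $s$ genuinely factors through $\pi$ via $\Phi(s)$, which is exactly the state-preservation clause of Theorem \ref{th: corrispondenza stati ISBL e booleanizzazione}. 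Once this is in place, the rest is a routine transport of Cauchy sequences back and forth, and I expect no real obstacle beyond setting up this identity cleanly.
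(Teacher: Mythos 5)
Your proof is correct and follows essentially the same route as the paper's: both rest on the single identity $d_{s}(a,b)=d_{\infty}([a]_{\sim},[b]_{\sim})$ (which the paper cites from Theorem \ref{th: corrispondenza stati ISBL e booleanizzazione} and you derive explicitly from state preservation and the fact that $\pi$ commutes with the term operation $\vartriangle$), and then transport Cauchy sequences and their limits across the distance-preserving surjection $\pi$. The only difference is that you spell out the lifting of representatives and the use of surjectivity, which the paper leaves implicit.
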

\begin{proof}
Observe that a sequence $\{x_{n}\}_{n\in\mathbb{N}}$ of elements in $\B$ is a Cauchy sequence if and only if the sequence $\{[x_{n}]_{\sim}\}_{n\in\mathbb{N}}$ is a Cauchy sequence in $\bool$. Indeed, if, for each $\varepsilon > 0$, there is a $n_{0}\in\mathbb{N}$ such that $d_{s}(x_{n},x_{m}) < \varepsilon $, for every $n,m > n_{0}$, then also $d_{\infty}([x_{n}]_{\sim},[x_{m}]_{\sim}) < \varepsilon $, since $d_{s}(x_{n},x_{m}) = d_{\infty}([x_{n}]_{\sim},[x_{m}]_{\sim})$ 
by Theorem \ref{th: corrispondenza stati ISBL e booleanizzazione}. The result follows by observing that $\lim_{n\to\infty} x_{n} = x$ if and only if $\lim_{n\to\infty} [x_{n}]_{\sim} = [x]_{\sim}$.
\end{proof}

\begin{theorem}\label{th: sulle completitions}
Let $(\B, d_{s})$ be an involutive bisemilattice with the pseudo-metric $d_{s}$ induced by a state $s$, then: 
\begin{enumerate}
\item $\widetilde{\B}=\PL(\widehat{\A}_{i})$ is an involutive bisemilattice; 
\item $(\widehat{\B},\widehat{d}_{s}) $ is isometric to $(\widetilde{\B}, \widetilde{d})$.
\end{enumerate}
\end{theorem}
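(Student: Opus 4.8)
The plan is to handle the two parts separately: first I would upgrade the system of completions to a genuine semilattice direct system of Boolean algebras (giving part (1)), and then recognise $(\widetilde{\B},\widetilde{d})$ as a completion of $(\B,d_s)$ and appeal to the uniqueness of completions up to isometry (giving part (2)).

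For (1), I would first record that each $p_{ij}$ is distance preserving: being a Boolean homomorphism it commutes with $\vartriangle$, and since it preserves the measures (Proposition \ref{prop: restrizione su Ai e uno stato}) we get $d_{s_j}(p_{ij}(a),p_{ij}(b)) = s_j(p_{ij}(a\vartriangle b)) = s_i(a\vartriangle b) = d_{s_i}(a,b)$. Lemma \ref{lemma: omomorfismi tra completizzazione di algebre di Boole} then supplies distance preserving homomorphisms $\widehat{p}_{ij}\colon\widehat{\A}_i\to\widehat{\A}_j$ extending $p_{ij}$. Keeping the same semilattice $I$, I would check that $\langle\{\widehat{\A}_i\}_{i\in I},I,\widehat{p}_{ij}\rangle$ is again a semilattice direct system: $\widehat{p}_{ii}=\mathrm{id}$ is immediate, and the compatibility $\widehat{p}_{ik}=\widehat{p}_{jk}\circ\widehat{p}_{ij}$ for $i\le j\le k$ follows because both sides are continuous and agree on the dense subalgebra $\A_i$, where they reduce to $p_{ik}=p_{jk}\circ p_{ij}$. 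Since each $\widehat{\A}_i$ is a Boolean algebra and the P\l onka sum over any semilattice direct system of Boolean algebras is an involutive bisemilattice, $\widetilde{\B}=\PL(\widehat{\A}_i)\in\IBSL$.

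For (2), the strategy is to verify the two defining properties of the completion. First I would equip $\widetilde{\B}$ with a state $\widetilde{s}$ by setting $\widetilde{s}|_{\widehat{\A}_i}=\widehat{s}_i$, where $\widehat{s}_i$ is the extension of $s_i$ to $\widehat{\A}_i$; the maps $\widehat{p}_{ij}$ preserve these measures, since distance preservation together with $\widehat{p}_{ij}(0_i)=0_j$ forces $\widehat{s}_j(\widehat{p}_{ij}(\widehat{a}))=\widehat{s}_i(\widehat{a})$, so Proposition \ref{prop: restrizione su Ai e uno stato} guarantees that $\widetilde{s}$ is a state, and a continuity argument on approximating sequences shows $d_{\widetilde{s}}=\widetilde{d}$. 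The natural inclusion $\B\hookrightarrow\widetilde{\B}$ inherited from $\A_i\hookrightarrow\widehat{\A}_i$ is isometric, as one sees by evaluating $\widetilde{d}$ on constant sequences, and it is dense, because $\A_i$ is dense in $\widehat{\A}_i$ and the restriction of $\widetilde{d}$ to a single component $\widehat{\A}_i$ coincides with $\widehat{d}_{s_i}$.

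The main obstacle is the \emph{completeness} of $(\widetilde{\B},\widetilde{d})$. Here I would apply Lemma \ref{lemma: B completo sse bool e completa} to $(\widetilde{\B},d_{\widetilde{s}})$, which reduces the problem to completeness of the Booleanisation of $\widetilde{\B}$, namely of $\lim_{\to}\widehat{\A}_i$. The delicate step is then to argue that this direct limit is complete: the inclusions $\A_i\hookrightarrow\widehat{\A}_i$ induce a distance preserving homomorphism $\bool=\lim_{\to}\A_i\to\lim_{\to}\widehat{\A}_i$ with dense range, so that $\lim_{\to}\widehat{\A}_i$ is (isometric to) the completion of $\bool$ and hence complete. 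Once $(\widetilde{\B},\widetilde{d})$ is known to be complete and to contain $\B$ as a dense isometric subspace, the uniqueness of the completion (up to isometry) produces the desired isometry with $(\widehat{\B},\widehat{d}_s)$. Throughout, the point requiring care is that $\widetilde{d}$ is only a pseudometric, so distinct elements of distance zero living in different components must be tracked explicitly and are never identified by the completion.
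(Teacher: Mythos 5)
Your treatment of part (1) is correct and follows the paper's route, with welcome extra detail: you verify that each $p_{ij}$ is distance preserving (a fact the paper needs tacitly in order to invoke Lemma \ref{lemma: omomorfismi tra completizzazione di algebre di Boole}), and you check the direct-system identities for the extensions $\widehat{p}_{ij}$ by continuity and density. Equipping $\widetilde{\B}$ with a state $\widetilde{s}$ and checking $d_{\widetilde{s}}=\widetilde{d}$ is likewise a genuine improvement on the paper's write-up, since Lemma \ref{lemma: B completo sse bool e completa} applies only to pseudometrics induced by states; your density argument for $\B$ in $\widetilde{\B}$ is the same as the paper's.

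The gap is exactly in the step you flag as delicate: completeness of $(\widetilde{\B},\widetilde{d})$. After correctly reducing it, via Lemma \ref{lemma: B completo sse bool e completa}, to completeness of $\lim_{\to}\widehat{\A}_{i}$, you argue that since $\bool$ maps into $\lim_{\to}\widehat{\A}_{i}$ isometrically with dense range, the latter ``is the completion of $\bool$ and hence complete.'' This is circular: a dense isometric embedding $X\to Y$ exhibits $Y$ as the completion of $X$ only when $Y$ is already known to be complete, which is precisely what you are trying to prove (any space embeds densely and isometrically into itself). Moreover, the step cannot be repaired in general, because completeness does not pass to direct limits. Take $I=(\mathbb{N},\max)$, let $\A_{n}$ be the finite algebra generated by the dyadic intervals $[k2^{-n},(k+1)2^{-n})\subseteq[0,1)$, with Lebesgue measure as $s_{n}$ and the natural embeddings as $p_{nm}$ (on disjoint copies); these data satisfy the hypotheses of Proposition \ref{prop: restrizione su Ai e uno stato}, so they define a state $s$ on $\B=\PL(\A_{n})$. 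Each $(\A_{n},d_{s_{n}})$ is a finite metric space, hence complete, so $\widehat{\A}_{n}=\A_{n}$ and $(\widetilde{\B},\widetilde{d})=(\B,d_{s})$; but the sequence $a_{n}=[0,\lfloor 2^{n}/3\rfloor 2^{-n})$ is Cauchy in $(\B,d_{s})$ and has no limit in $\B$ (a limit would be a dyadic set agreeing with $[0,1/3)$ up to measure zero), so $(\widetilde{\B},\widetilde{d})$ is not complete and cannot be isometric to the complete space $(\widehat{\B},\widehat{d}_{s})$. In fairness, the paper's own proof has the same hole in the same place --- it simply asserts $\widetilde{\A}_{\infty}=\widehat{\A}_{\infty}$ as a ``preliminary observation'' --- so you have faithfully reproduced, rather than repaired, that gap; the identification (and with it the theorem) does hold when the semilattice $I$ has a greatest element $\top$, in particular when $I$ is finite, since then $\lim_{\to}\widehat{\A}_{i}\cong\widehat{\A}_{\top}$ is complete.
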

\begin{proof}
(1) $\widetilde{B}=\bigsqcup_{i\in I}\widehat{A}_{i}$, where $\widehat{\A}_{i}$, for each $i\in I$, is a Boolean algebra (since Boolean algebras are closed under completions). Moreover, by Lemma \ref{lemma: omomorfismi tra completizzazione di algebre di Boole}, the system $\langle \{\widehat{\A}_{i}\}_{i\in I}, I, \{\widehat{h}_{ij}\}_{i\leq j}\rangle$ is a semilattice direct systems of Boolean algebras (given that $\langle \{ \A_{i}\}_{i\in I}, I, \{ h_{ij}\}_{i\leq j}\rangle$ is such) and this is enough to conclude that $\widetilde{\B}$ is an involutive bisemilattice.\\
\noindent
(2) Preliminarily observe that $\widetilde{\A}_{\infty} = \widehat{\A}_{\infty}$ (the Booleanisations of $\widetilde{\B}$ and $\widehat{\B}$ coincide). Then, by Lemma \ref{lemma: B completo sse bool e completa}, it follows that $\widetilde{\B}$ is complete, as its Booleanisation is complete. We claim that $(\B,d_{s})$ is a dense subset of $(\widetilde{\B},\widetilde{d})$. Indeed, let $\widetilde{b}\in\widetilde{B}$, then $\widetilde{b}\in\widehat{A}_{i}$, for some $i\in I$ (since $\widetilde{B}= \bigsqcup_{i\in I}\widehat{A}_{i} $). Thus, there exists a sequence $\{a_{n}\}_{n\in\mathbb{N}}\in A_{i}$ such that $\lim_{n\to\infty}d_{s}(a_{n},\widetilde{b}) = 0$. Then, by definition of $\widetilde{d}$, $\widetilde{d}(a_{n},\widetilde{b})\to 0$, which implies that $a_{n}$ (as element of $B$) converges to $\widetilde{b}$, i.e. $\B$ is dense in $\widetilde{\B}$. 
It follows (from the previous claim) that there is an isometric bijection $f\colon (\widehat{\B},\widehat{d}_{s})\to (\widetilde{\B}, \widetilde{d})$. 
\end{proof}

\section{The topology of involutive bisemilattices}\label{Sec: topology}

An involutive bisemilattice $\B$ carrying a state $s$ can be topologised with the topology induced by the pseudo-metric $d_{s}$, defined in \eqref{eq: definizione ds}. In virtue of Theorem \ref{th: corrispondenza stati ISBL e booleanizzazione} (and Theorem \ref{th: corrispondenza stati ISBL e booleanizzazione (caso faithful)} for faithful states), also the Booleanisation $\bool $ (of $\B$) can be topologised via the corresponding probability measure which we indicate as $d_{\infty}= \Phi(s)\circ\vartriangle$ (where $\Phi$ is the map defined in \eqref{ed: definition di Phi}). In this section, we confine ourselves only to faithful states over (injective) involutive bisemilattices and when referring to $\B$ and $\bool$ as topological spaces, we think them as equipped with the topologies $\mathcal{T}_{d_{s}}$ and $\mathcal{T}_{d_{\infty}}$ induced by the respective (pseudo) metric. To achieve this we should assume that the involutive bisemilattices under consideration carry faithful states.
Recall that, in this topology, a subset $U\subset B$ is open if and only if for every $x\in U$, there exists $r > 0$ such that $D_{r}(x)\subset U$, where $D_{r}(x)= \{y\in B | d_{s}(x,y) < r \}$ is the open disk centered in $x$ with radius $r$. Moreover, one base of both topologies is given by the family of all open disks with respect to $d_{s}$ and $d_{\infty}$, respectively.

\begin{remark}\label{rem: lo stato e sempre una funzione continua}
Notice that, in an involutive bisemilattice $\B$, carrying a state $s$, $s(b)=d_{s}(b,0)$, for every $b\in B$. Indeed let $b\in A_{i}$, for some $A_{i}$ in the P\l onka sum representation of $\B$, then $d_{s}(b,0)= s\circ\Delta^{\B}(b,0) = s ((b\wedge 1_{i})\vee(b'\wedge 0_{i})) = s(b\vee 0_{i}) = s(b)$. This implies that $s\colon\B\to [0,1]$ is continuous (since the pseudo-metric is continuous) when $\B$ is topologised via  $\mathcal{T}_{d_{s}}$.
\end{remark}

\begin{definition}
Let $X$ be a topological space and let $\equiv\;\subseteq X\times X$ the equivalence relation defined as $x\equiv y$ if and only if $x$ and $y$ have the same open neighbourhoods. Then, the space $X_{/\equiv}$ is the \emph{Kolmogorov quotient} of $X$.
\end{definition}

In words, two points $x$, $y$ belonging to the same equivalent class with respect to $\equiv$ are \emph{topologically indistinguishable}.

\begin{remark}\label{rem: sulla topologia quoziente}
Notice that the topology on $X_{/\equiv}$ is the quotient topology, namely a set $U$ is open in $X_{/\equiv}$ if and only if $\pi^{-1}(U)$ is open in $X$, where $\pi\colon X\to X_{/\equiv} $ is the (natural) projection onto $X_{/\equiv}$.
\end{remark}

\begin{proposition}\label{prop: il limite diretto e il quoziente di Kolmogorov}
Let $\B\in\inj$ which carries a faithful state $s$. Then the Kolmogorov quotient of $\B$ is its Booleanisation $\bool$.
\end{proposition}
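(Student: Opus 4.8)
The plan is to show that the topological indistinguishability relation $\equiv$ coincides with the Booleanisation congruence $\sim$, and then to verify that the resulting quotient topology is exactly $\mathcal{T}_{d_{\infty}}$. First I would record the general pseudometric fact that, in $(\B,d_{s})$, two points $x,y$ are topologically indistinguishable precisely when $d_{s}(x,y)=0$. For the nontrivial direction, if $d_{s}(x,y)>0$ then the open disk $D_{d_{s}(x,y)}(x)$ is an open neighbourhood of $x$ that does not contain $y$, so $x$ and $y$ are distinguished; conversely, if $d_{s}(x,y)=0$ then, since the open disks form a base (see the discussion preceding Remark \ref{rem: lo stato e sempre una funzione continua}), every basic neighbourhood of $x$ contains $y$ and vice versa, whence $x$ and $y$ share all open neighbourhoods. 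Thus $x\equiv y$ if and only if $d_{s}(x,y)=0$.

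The crux is to prove that $d_{s}(a,b)=0$ if and only if $a\sim b$. Writing $a\in A_{i}$, $b\in A_{j}$ and $k=i\vee j$, the computation of the symmetric difference in a P\l onka sum gives $d_{s}(a,b)=s_{k}\bigl(p_{ik}(a)\vartriangle^{\A_{k}}p_{jk}(b)\bigr)$. Since $s$ is faithful, Proposition \ref{prop: s faithful su Ai} tells us that $s_{k}$ is a regular measure and that every $p_{ij}$ (with $i\leq j$) is injective. Regularity forces $d_{s}(a,b)=0$ to imply $p_{ik}(a)\vartriangle^{\A_{k}}p_{jk}(b)=0_{k}$, i.e. $p_{ik}(a)=p_{jk}(b)$, and taking $l=k$ in the definition of $\sim$ yields $a\sim b$. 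For the converse, if $a\sim b$ there is some $l\geq i,j$ with $p_{il}(a)=p_{jl}(b)$; factoring $p_{il}=p_{kl}\circ p_{ik}$ and $p_{jl}=p_{kl}\circ p_{jk}$ and using that $p_{kl}$ is injective, we recover $p_{ik}(a)=p_{jk}(b)$, so $d_{s}(a,b)=s_{k}(0_{k})=0$. This is where both hypotheses are genuinely used: regularity is what converts a vanishing distance into an equality of images at level $k$, while injectivity is exactly what is needed to pull a coincidence occurring high up in the direct system back down to $k=i\vee j$. I expect this to be the main obstacle, since without injectivity one can have $a\sim b$ but $d_{s}(a,b)>0$ (cf. the Remark following Proposition \ref{prop: pseudo-metrica su IBSL}), so the two relations would not agree.

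Combining the two equivalences gives $\equiv\,=\,\sim$, hence as sets $\B_{/\equiv}=\B_{/\sim}=\bool$ and the natural projection $\pi\colon\B\to\bool$ is the quotient map. It then remains to match the topologies, for which I would use the identity $d_{s}(a,b)=d_{\infty}([a]_{\sim},[b]_{\sim})$ supplied by Theorem \ref{th: corrispondenza stati ISBL e booleanizzazione}. This makes $\pi$ distance preserving, hence continuous, so every $d_{\infty}$-open set pulls back to a $d_{s}$-open set and is therefore open in the quotient topology (Remark \ref{rem: sulla topologia quoziente}). Conversely, if $\pi^{-1}(U)$ is $d_{s}$-open and $[a]_{\sim}\in U$, choose $r>0$ with $D_{r}(a)\subseteq\pi^{-1}(U)$; then $d_{\infty}([a]_{\sim},[b]_{\sim})<r$ gives $d_{s}(a,b)<r$, so $b\in D_{r}(a)$ and $[b]_{\sim}\in U$, showing $D_{r}([a]_{\sim})\subseteq U$. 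Hence the quotient topology equals $\mathcal{T}_{d_{\infty}}$, and the Kolmogorov quotient of $\B$ is precisely its Booleanisation $\bool$.
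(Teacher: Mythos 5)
Your proof is correct, and its skeleton is the paper's: identify topological indistinguishability with vanishing of $d_{s}$, then show $d_{s}(a,b)=0$ if and only if $a\sim b$, the forward direction being essentially the paper's (faithfulness forces $p_{ik}(a)\vartriangle^{\A_{k}}p_{jk}(b)=0_{k}$). The genuine divergence is in the converse. You pull the coincidence $p_{il}(a)=p_{jl}(b)$ down to level $k=i\vee j$ by factoring through $p_{kl}$ and invoking injectivity; the paper instead pushes the computation up to level $l$ using only measure preservation (Proposition \ref{prop: restrizione su Ai e uno stato}): $d_{s}(a,b)=s_{k}\bigl(p_{ik}(a)\vartriangle^{\A_{k}}p_{jk}(b)\bigr)=s_{l}\bigl(p_{il}(a)\vartriangle^{\A_{l}}p_{jl}(b)\bigr)=s_{l}(0_{l})=0$. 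Both arguments are valid under the stated hypotheses, but the paper's version buys more: it shows that $a\sim b\Rightarrow d_{s}(a,b)=0$ holds for an \emph{arbitrary} state on an \emph{arbitrary} involutive bisemilattice, with neither injectivity nor faithfulness. Your side remark that injectivity is ``exactly what is needed'' in this direction, and that without it one could have $a\sim b$ with $d_{s}(a,b)>0$, is therefore mistaken --- measure preservation is built into the very notion of state, so this implication can never fail; indeed the Remark you cite after Proposition \ref{prop: pseudo-metrica su IBSL} exhibits the opposite phenomenon (distinct elements $a\sim b$ with $d_{s}(a,b)=0$, i.e.\ failure of the metric property, not of this implication). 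Note also that assuming $\B\in\inj$ costs nothing here, since by Proposition \ref{prop: s faithful su Ai} injectivity already follows from the existence of a faithful state. Finally, your explicit verification that the quotient topology coincides with $\mathcal{T}_{d_{\infty}}$, via $d_{s}(a,b)=d_{\infty}([a]_{\sim},[b]_{\sim})$, is a worthwhile addition: the paper proves only the identification of the equivalence relations and leaves the matching of topologies implicit.
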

\begin{proof}
We have to show that two elements $a,b\in B$ are topologically indistinguishable (namely, $d_{s}(a,b)= 0$) if and only if $a\sim b $. Assume w.l.o.g. that $a\in A_{i}$ and $b\in A_{j}$ with $i\neq j$ and set $k=i\vee j$. \\
\noindent
($\Rightarrow$) Let $d_{s}(a,b)= 0$ (i.e. $a,b$ are indistinguishable). Then $s(a\vartriangle^{\B} b) = 0$ and, since $s$ is faithful, $a\vartriangle^{\B} b = p_{ik}(a) \vartriangle^{\A_{k}} p_{jk}(b) = 0_{k}$. Then $p_{ik}(a) = p_{jk}(b) $, i.e. $a\sim b$.
 \\
\noindent
($\Leftarrow$) Let $a\sim b$. It follows that there exists $l\in I$ such that $i,j\leq l$ and $p_{il}(a)=p_{jl}(b)$. Then
\begin{align*}
s (a\vartriangle b) & = s (p_{ik}(a) \vartriangle^{\A_{k}} p_{jk}(b)) &  \\
& = s_{k} (p_{ik}(a) \vartriangle^{\A_{k}} p_{jk}(b)) \\
& = s_{l} (p_{kl}(p_{ik}(a) \vartriangle^{\A_{k}} p_{jk}(b))) & \text{(Proposition \ref{prop: restrizione su Ai e uno stato})}  \\ 
& = s_{l} (p_{kl}\circ p_{ik}(a) \vartriangle^{\A_{l}} p_{kl}\circ p_{jk}(b)) & \\
& = s_{l} ( p_{il}(a) \vartriangle^{\A_{l}} p_{jl}(b)) \\
& = s (0_{l}) \\
& = 0.
\end{align*}
This shows that $a,b$ are indeed topologically indistinguishable points.
\end{proof}

The proof of Proposition \ref{prop: il limite diretto e il quoziente di Kolmogorov} shows another interesting fact worth being highlighted: while $\B$ is a pseudo-metric space, its Booleanisation $\bool$ becomes a \emph{metric} space. 

By combining Proposition \ref{prop: il limite diretto e il quoziente di Kolmogorov} and Theorem \ref{th: fatti su B e Bool} we immediately get the following. 

\begin{corollary}
Let $\B\in\inj$ with a faithful state $s$. Then $\widehat{\B}_{/_{\equiv}} = \widehat{\A}_{\infty}$, i.e. the Kolmogorov quotient of the completion $(\B,d_{s}) $ is the completion of the metric space $(\bool, d_{\infty})$.
\end{corollary}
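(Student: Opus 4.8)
The plan is to apply Proposition~\ref{prop: il limite diretto e il quoziente di Kolmogorov} not to $\B$ itself but to its completion $\widehat{\B}$, and then to identify the Booleanisation so produced with the completion of $(\bool,d_{\infty})$. The whole argument rests on the commutation between ``taking completions'' and ``taking Booleanisations'' that is already recorded inside the proof of Theorem~\ref{th: sulle completitions}.

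First I would check that $\widehat{\B}$ satisfies the hypotheses of Proposition~\ref{prop: il limite diretto e il quoziente di Kolmogorov}, i.e.\ that $\widehat{\B}\in\inj$ and that it carries a faithful state. By Theorem~\ref{th: sulle completitions}(1) the space $\widehat{\B}$ is isometric to $\widetilde{\B}=\PL(\widehat{\A}_{i})$, an involutive bisemilattice whose connecting homomorphisms are the maps $\widehat{h}_{ij}$ of Lemma~\ref{lemma: omomorfismi tra completizzazione di algebre di Boole}. Since $s$ is faithful, each $s_{i}$ is regular by Proposition~\ref{prop: s faithful su Ai}, so the induced $d_{s_{i}}$, and hence its extension $\di_{i}$ on $\widehat{\A}_{i}$, is a genuine metric; as each $\widehat{h}_{ij}$ is distance preserving, it is therefore injective, giving $\widehat{\B}\in\inj$. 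Moreover the extended state $\widehat{s}$ satisfies $\widehat{s}(\widehat{b})=\widehat{d}_{s}(\widehat{b},0)$, exactly as in Remark~\ref{rem: lo stato e sempre una funzione continua}, and since $\widehat{d}_{s}$ separates the points of each completed component, $\widehat{s}$ is faithful.

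Granting this, Proposition~\ref{prop: il limite diretto e il quoziente di Kolmogorov} applied to $\widehat{\B}$ yields that its Kolmogorov quotient $\widehat{\B}_{/_{\equiv}}$ equals the Booleanisation of $\widehat{\B}$, namely $\widetilde{\A}_{\infty}=\displaystyle\lim_{\to}\widehat{\A}_{i}$. It then remains to invoke the identification $\widetilde{\A}_{\infty}=\widehat{\A}_{\infty}$ recorded in the proof of Theorem~\ref{th: sulle completitions}(2), which says that the Booleanisation of $\widehat{\B}$ (equivalently, of $\widetilde{\B}$) coincides with the completion $\widehat{\A}_{\infty}$ of $(\bool,d_{\infty})$. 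Chaining the two equalities gives $\widehat{\B}_{/_{\equiv}}=\widehat{\A}_{\infty}$, as claimed.

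The step I expect to be the main obstacle is the honest verification that $\widehat{\B}$ does land in $\inj$ and carries a \emph{faithful} extended state, since this is what licenses the application of Proposition~\ref{prop: il limite diretto e il quoziente di Kolmogorov} to $\widehat{\B}$; the delicate point there is that distance preservation upgrades to injectivity only because the completed components are honestly metric (not merely pseudometric). The deeper identification $\widetilde{\A}_{\infty}=\widehat{\A}_{\infty}$, i.e.\ that the direct limit of the completed Boolean algebras is complete and contains $\bool$ as a dense subalgebra, is where the metric and the direct-limit structures must be reconciled, but this is already supplied by Lemma~\ref{lemma: B completo sse bool e completa} together with the density argument for $\widetilde{\B}$ in Theorem~\ref{th: sulle completitions}, so beyond these checks the proof is pure bookkeeping.
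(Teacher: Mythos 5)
Your proposal is correct and is essentially the paper's own argument: the paper derives the corollary ``immediately'' by combining Proposition \ref{prop: il limite diretto e il quoziente di Kolmogorov} with the completion theorem (Theorem \ref{th: sulle completitions}), which is exactly the combination you carry out, applying the proposition to $\widehat{\B}\cong\widetilde{\B}=\PL(\widehat{\A}_{i})$ and then invoking the identification $\widetilde{\A}_{\infty}=\widehat{\A}_{\infty}$ from that theorem's proof. Your explicit verification that $\widetilde{\B}$ lies in $\inj$ and carries a faithful extended state (via regularity of each $s_{i}$, metricity of the completed components, and injectivity of the distance-preserving maps $\widehat{h}_{ij}$) is precisely the bookkeeping the paper leaves implicit, and it is sound.
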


Recall that, given a surjective map $f\colon X\to Y$ between topological spaces $X$ and $Y$, a \emph{section} of $f$ is a continuous map $g\colon Y\to X$ such that $f\circ g = id_Y$.

\begin{theorem}\label{th: fatti su B e Bool}
The following facts hold for the topological spaces $\B$ and $\bool$:  
\begin{enumerate}
\item There exists a section $\sigma\colon \bool\to \B$ of $\pi$ such that $\sigma(\bool)$ is dense in $\B$; 
\item $\sigma$ preserves states, namely $s\circ\sigma = \Phi(s)$. 
\end{enumerate}
\end{theorem}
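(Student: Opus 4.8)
The plan is to build $\sigma$ by simply selecting one representative out of each $\sim$-class, and then to observe that every property demanded of it follows from a single fact: the quotient map $\pi$ is distance preserving. Indeed, since $\pi$ is a homomorphism and $\Phi(s)([c]_{\sim}) = s(c)$ by Theorem \ref{th: corrispondenza stati ISBL e booleanizzazione}, for all $a,b\in B$ one has
\[
d_{\infty}(\pi(a),\pi(b)) = \Phi(s)([a\vartriangle b]_{\sim}) = s(a\vartriangle b) = d_{s}(a,b).
\]
First I would fix, for each class $q\in\bool$, an element $\sigma(q)\in B$ with $\pi(\sigma(q)) = q$ (a choice of representatives, i.e.\ a right inverse of the surjection $\pi$); by construction $\pi\circ\sigma = \mathrm{id}_{\bool}$.

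Next I would verify that $\sigma$ is continuous, so that it is a section in the sense of the definition preceding the statement. This is immediate from the displayed identity: for $p,q\in\bool$,
\[
d_{s}(\sigma(p),\sigma(q)) = d_{\infty}(\pi(\sigma(p)),\pi(\sigma(q))) = d_{\infty}(p,q),
\]
so $\sigma$ is in fact a distance-preserving map of the metric space $(\bool,d_{\infty})$ into the pseudometric space $(\B,d_{s})$, hence continuous. I would stress here that the particular choice of representatives is irrelevant, since any two representatives of the same class are at $d_{s}$-distance $0$ (this is exactly the content of Proposition \ref{prop: il limite diretto e il quoziente di Kolmogorov}).

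For density, I would take an arbitrary $b\in B$ and look at $\sigma(\pi(b))\in\sigma(\bool)$. Applying the identity once more,
\[
d_{s}(b,\sigma(\pi(b))) = d_{\infty}(\pi(b),\pi(\sigma(\pi(b)))) = d_{\infty}(\pi(b),\pi(b)) = 0,
\]
so every point of $\B$ lies in the closure of $\sigma(\bool)$ (in fact at distance $0$ from it); thus $\sigma(\bool)$ is dense, establishing (1). Part (2) is then automatic: for $q\in\bool$ the state-preservation clause of Theorem \ref{th: corrispondenza stati ISBL e booleanizzazione} gives $s(\sigma(q)) = \Phi(s)(\pi(\sigma(q))) = \Phi(s)(q)$, i.e.\ $s\circ\sigma = \Phi(s)$.

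The only genuine content is therefore the distance-preservation identity for $\pi$, which I expect to be the pivot of the whole argument; everything else is bookkeeping. The single point requiring care is that $\sigma$ is defined through an unspecified selection of representatives, so I would make explicit that none of the three properties (being a section, density, state preservation) depends on that selection, precisely because $d_{s}$ vanishes on each $\sim$-class.
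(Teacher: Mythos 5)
Your proof is correct, and it follows the same overall scaffold as the paper's: define $\sigma$ by a choice of representatives, verify continuity and density, and obtain (2) from the state-preservation clause of Theorem \ref{th: corrispondenza stati ISBL e booleanizzazione}. The difference is in execution. You isolate the single identity $d_{s}(a,b) = d_{\infty}(\pi(a),\pi(b))$ (valid because $\pi$ is a homomorphism, so $[a\vartriangle b]_{\sim} = [a]_{\sim}\vartriangle[b]_{\sim}$, together with $s = \Phi(s)\circ\pi$) and derive everything from it: $\sigma$ becomes an isometric embedding, hence continuous, and density is the one-line computation $d_{s}(b,\sigma(\pi(b))) = 0$. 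The paper proves continuity by computing $\sigma^{-1}(D_{r}(b)) = D_{r}([b]_{\sim})$, which is the same identity in disguise, but its density argument is a longer component-wise computation inside the P\l onka sum: it first shows $p_{ij}(b)\in U$ for all $j\geq i$, then uses $a\sim b$ to locate a common image $p_{ik}(b) = p_{jk}(a)\in U$ and concludes that the representative $\sigma([b]_{\sim})$ lies in $U$. Your route buys a shorter, representation-free proof (no mention of the homomorphisms $p_{ij}$) that also makes transparent that neither faithfulness of $s$ nor injectivity of $\B$ is needed; the paper's route buys an explicit picture of where the chosen representative sits inside the P\l onka sum. One small remark: your appeal to Proposition \ref{prop: il limite diretto e il quoziente di Kolmogorov} for ``$d_{s}$ vanishes on each $\sim$-class'' is dispensable (and that proposition is stated under faithfulness); the implication you actually use, $a\sim b \Rightarrow d_{s}(a,b)=0$, already follows from your displayed identity, since $\pi(a)=\pi(b)$ forces $d_{s}(a,b)=d_{\infty}(\pi(a),\pi(b))=0$.
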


\begin{proof}

(1) Consider $\sigma\colon \bool\to \B$ defined as follows: 
\[
\sigma([a]_{\sim}) = a, 
\]
where $a$ is picked by a choice function among the elements of the equivalence class $[a]_{\sim}$ (this can be done recurring to the Axiom of choice). We first show that $\sigma$ is continuous. Let $D_{r}(b)$ an the open disk of radius $r$ centered in b, for some $b\in B$. $\sigma^{-1}(D_{r}(b)) = \{[y]_{\sim}\in A_{\infty}\mid\sigma([y]_{\sim})\in D_{r}(b) \} = \{[y]_{\sim}\in A_{\infty}\mid y\in D_{r}(b) \} = \{[y]_{\sim}\in A_{\infty}\mid d_{s}(y,b) < r \} = \{[y]_{\sim}\in A_{\infty}\mid d_{\infty}([b]_{\sim},[y]_{\sim}) < r\} = D_{r}([b]_{\sim}) $, the open disk of $\bool$ centered in $[b]_{\sim}$. This shows the continuity of $\sigma $. \\
\noindent
The fact that  $\pi\circ\sigma = id$ immediately follows from the definition of $\sigma$.\\
\noindent
To show that $\sigma(\bool)$ is dense in $\B$, we have to check that $\sigma(\bool)\cap U\neq \emptyset$, for every non-void open set $U\subset B$. Since $U$ is non empty, there exists $b\in U$. Moreover, since $U$ is open, then, for some $r > 0$, $D_{r(b)}\subset U$ (where $D_{r(b)} = \{y\in B \mid d_{s}(b,y) < r\}$). W.l.o.g. we can assume that $b\in A_{i}$, for some $i\in I$. Observe that, for each $j\in I$ with $i\leq j$, we have that $p_{ij}(b)\in U$. Indeed $d_{s}(b,p_{ij}(b)) = s(b\vartriangle^{\B} p_{ij}(b)) = s_{j}(p_{ij}(b)\vartriangle^{\A_{j}} p_{ij}(b)) = 0$, which implies that $p_{ij}(b)\in D_{r(b)}\subset U$. If $\sigma([b]) = b$, then we have finished. So, assume that $\sigma([b]) = a$, with $a\neq b$. W.l.o.g. let $a\in A_{j}$ (for some $j\in I$). By definition of $\sigma$, $a\in [b]_{\sim}$, i.e. there exists some $k\in I$, such that $i,j\leq k$ and $p_{ik}(b) = p_{jk}(a)$. Since $p_{ik}(b)\in U$ (for the above observation), then also $p_{jk}(a)\in U$. Reasoning as above, one checks that $d_{s}(a,p_{jk}(a)) = 0$, which implies that $a\in U$. This shows that $\sigma(\bool)\cap U\neq \emptyset$, for every non-void open set $U\subset B$, i.e. $\sigma(\bool)$ is dense in $\B$. \\
\noindent
(2) follows from the definition of $\sigma$ and Theorem \ref{th: corrispondenza stati ISBL e booleanizzazione}.
\end{proof}

\begin{remark}
Observe that the projection $\pi$ admits many sections (depending on the cardinality of the fiber $\pi^{-1}([b])$, for $b\in B$) and all of them are topological embeddings by construction.
\end{remark}

Recall that, if $f\colon X\to Y$ be a continuous map between two topological spaces ($X$ and $Y$), $V\subseteq X$ is $f$-saturated (or saturated with respect to $f$) if $V=f^{-1}(f(V))$.

\begin{lemma}\label{lemma: saturi rispetto a pi}
Every open and closed set of an involutive bisemilattice $\B$ is saturated with respect to the projection $\pi\colon \B\to\bool$. In particular, $\pi$ is (continuous) open and closed.
\end{lemma}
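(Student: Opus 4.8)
The plan is to reduce everything to the dictionary, already supplied by Proposition \ref{prop: il limite diretto e il quoziente di Kolmogorov}, between the fibers of $\pi$ and the classes of topological indistinguishability on $\B$. Since $s$ is faithful, that proposition gives $a\sim b$ if and only if $d_{s}(a,b)=0$, i.e.\ if and only if $a$ and $b$ have the same open neighbourhoods; as $\pi(a)=\pi(b)$ holds exactly when $a\sim b$, a subset $V\subseteq B$ is $\pi$-saturated precisely when it is a union of indistinguishability classes. In other words $\bool$ is the Kolmogorov quotient of $\B$ and, by Remark \ref{rem: sulla topologia quoziente}, it carries the quotient topology associated with $\pi$. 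This identification is the engine driving the whole argument.

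First I would show that every open set $U\subseteq B$ is saturated. Let $a\in U$ and suppose $\pi(b)=\pi(a)$, so that $d_{s}(a,b)=0$. Since $U$ is open there is $r>0$ with $D_{r}(a)\subseteq U$, and $d_{s}(a,b)=0<r$ forces $b\in D_{r}(a)\subseteq U$; hence $U=\pi^{-1}(\pi(U))$. For closed sets I would simply pass to complements: the fibers of $\pi$ partition $B$, so the complement of a saturated set is again saturated, and every closed set is the complement of an open (hence saturated) set. This settles the first assertion for both open and closed sets.

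For the ``in particular'' clause, continuity of $\pi$ is immediate, either from the definition of the quotient topology or from the identity $d_{\infty}(\pi(a),\pi(b))=d_{s}(a,b)$ coming from Theorem \ref{th: corrispondenza stati ISBL e booleanizzazione}, which exhibits $\pi$ as a (pseudo)isometry. To see that $\pi$ is open, take $U\subseteq B$ open; since $U$ is saturated we have $\pi^{-1}(\pi(U))=U$, which is open, whence $\pi(U)$ is open by the definition of the quotient topology. The same argument with a closed set $C$ in place of $U$, using that $C$ is saturated so that $\pi^{-1}(\pi(C))=C$ is closed, shows that $\pi(C)$ is closed; thus $\pi$ is open and closed.

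The hard part is not any computation but the bookkeeping around faithfulness and the two topologies on $\bool$. The inclusion $a\sim b\Rightarrow d_{s}(a,b)=0$ holds unconditionally, but its converse — which is what actually identifies $\pi$-saturation with $\equiv$-saturation — is exactly where Proposition \ref{prop: il limite diretto e il quoziente di Kolmogorov}, and hence the standing assumption that $s$ is faithful, enters. I would also make sure, when invoking the quotient topology, that the metric topology on $\bool$ induced by $d_{\infty}$ genuinely coincides with the quotient topology of $\pi$, which is precisely the content of that proposition together with Remark \ref{rem: sulla topologia quoziente}; granting this, the openness and closedness of $\pi$ are essentially automatic consequences of saturation.
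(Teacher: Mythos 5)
Your proof is correct and takes essentially the same route as the paper's: open sets are saturated because any two points in the same fiber of $\pi$ are at pseudodistance zero (so no open set can contain one without the other), closed sets follow from saturation of open sets, and the openness/closedness of $\pi$ is then read off from the identification of $\bool$ with the Kolmogorov quotient carrying the quotient topology. The only cosmetic differences are that the paper verifies saturation on the basis of open disks via the identity $d_{\infty}([x]_{\sim},[b]_{\sim})=d_{s}(x,b)$ and handles closed sets by a direct set-theoretic computation, whereas you argue on arbitrary open sets and pass to complements of saturated sets, which is if anything slightly cleaner.
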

\begin{proof}
Since the basis of the topology (over $\B$) is the family of open disks, then, with respect to open sets, it is enough to check that $\pi^{-1}(\pi(D_{r})) = D_{r}$, for some open disk $D_r$. Let $D_{r}(b)$ an open disk (of radius $r$) centered in $b$, for some $b\in B$. Then $\pi^{-1}(\pi(D_{r}(b))) = \{ x\in B \mid \pi(x)\in\pi(D_{r}(b)) \} = \{ x\in B\mid d_{\infty}([x],[b]) < r \} = \{ x\in B \mid d_{s}(x,b) < r \} = D_{r}(b) $, where the second last equality holds by Theorem \ref{th: corrispondenza stati ISBL e booleanizzazione}.\\
\noindent
Let $C\subseteq B$ a closed set. Then $C= B \setminus U$, for some open set $U$. Observe that $C\subseteq \pi^{-1}(\pi (C))$ holds in general, so we have to show only the converse inclusion. To this end $\pi^{-1}(\pi (C)) = \pi^{-1}(\pi(B\setminus U)) \subseteq \pi^{-1}(\pi(B)\setminus\pi(U)) = \pi^{-1}(\pi(B))\setminus\pi^{-1}(\pi(U)) = B\setminus U = C$, where the second last equality holds since open sets are saturated with respect to $\pi$. Finally, the fact that $\pi$ is open and closed follows from what we have just proved, observing that $\bool$ is the Kolmogorov quotient of $\B$ (Proposition \ref{prop: il limite diretto e il quoziente di Kolmogorov}), topologised with the quotient topology (see Remark \ref{rem: sulla topologia quoziente}).
\end{proof}

\begin{remark}\label{rem: fatto topologici}
In the proof of the following results we will use some well-known facts in general topology that we briefly recap (see, for instance, \cite{Munkres}). Let $f\colon X\to Y$, be an open and closed continuous function between topological spaces. Then
\begin{enumerate}
\item $f^{-1}(\mathrm{Int}(B)) = \mathrm{Int}(f^{-1}(B))$, for every $B\subseteq Y$;
\item $f(\overline{A}) = \overline{f(A)}$, for every $A\subseteq X$.  
 \end{enumerate}
\end{remark}

\begin{lemma}\label{lemma: chiusi e interno in B}
Let $C\subseteq B$ a closed set of an involutive bisemilattice $\B$. Then $\pi(\mathrm{Int}(C)) = \mathrm{Int}(\pi(C))$.
\end{lemma}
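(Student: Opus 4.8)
The plan is to combine three ingredients already available: the saturation of closed sets with respect to $\pi$ (Lemma \ref{lemma: saturi rispetto a pi}), the general topological facts about open–closed continuous maps collected in Remark \ref{rem: fatto topologici}, and the surjectivity of $\pi$. Recall that $\pi\colon\B\to\bool$ is a continuous, open and closed map (Lemma \ref{lemma: saturi rispetto a pi}) which is moreover surjective. These properties are exactly what is needed to transport interiors back and forth along $\pi$.

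First I would apply Remark \ref{rem: fatto topologici}-(1) to the map $\pi$ with the subset $\pi(C)\subseteq\bool$, obtaining
\[
\pi^{-1}(\mathrm{Int}(\pi(C))) = \mathrm{Int}(\pi^{-1}(\pi(C))).
\]
The crucial simplification comes from the fact that $C$ is closed: by Lemma \ref{lemma: saturi rispetto a pi}, $C$ is saturated with respect to $\pi$, that is $\pi^{-1}(\pi(C)) = C$. Substituting into the right-hand side yields
\[
\pi^{-1}(\mathrm{Int}(\pi(C))) = \mathrm{Int}(C).
\]

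To conclude, I would apply $\pi$ to both sides of this identity and use that $\pi$ is surjective, so that $\pi(\pi^{-1}(A)) = A$ for every $A\subseteq\bool$. With $A = \mathrm{Int}(\pi(C))$ this gives
\[
\mathrm{Int}(\pi(C)) = \pi\bigl(\pi^{-1}(\mathrm{Int}(\pi(C)))\bigr) = \pi(\mathrm{Int}(C)),
\]
which is the desired equality. I expect no genuine obstacle here: once the saturation of closed sets is invoked, the argument is a clean algebraic manipulation of $\pi$ and $\pi^{-1}$. The only point requiring care is the correct direction in which the hypotheses are used — namely that saturation is a property of \emph{closed} sets (not arbitrary ones), so the rewriting $\pi^{-1}(\pi(C)) = C$ is legitimate precisely because $C$ is assumed closed, and that the final cancellation $\pi\circ\pi^{-1} = \mathrm{id}$ on subsets of $\bool$ relies on surjectivity of $\pi$.
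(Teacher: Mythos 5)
Your proof is correct and follows essentially the same route as the paper: the paper's one-line chain $\pi(\mathrm{Int}(C)) = \pi(\mathrm{Int}(\pi^{-1}(\pi(C)))) = \pi(\pi^{-1}(\mathrm{Int}(\pi(C)))) = \mathrm{Int}(\pi(C))$ uses exactly your three ingredients (saturation of the closed set $C$, the interior-preimage identity from Remark \ref{rem: fatto topologici}-(1), and surjectivity of $\pi$ for the final cancellation), merely written in the opposite order. Your remark that saturation is what licenses $\pi^{-1}(\pi(C)) = C$ is precisely the point the paper relies on as well.
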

\begin{proof}
Let $C$ be a closed set in $B$. Observe that, from Lemma \ref{lemma: saturi rispetto a pi}, we have that $\pi$ is an open and closed continuous map. Hence 
$$ \pi(\mathrm{Int}(C)) = \pi(\mathrm{Int}(\pi^{-1}(\pi(C)))) = \pi(\pi^{-1}(\mathrm{Int}(\pi(C)))) = \mathrm{Int}(\pi(C)),$$
where we have applied Lemma \ref{lemma: saturi rispetto a pi} and the properties of open and closed continuous maps (see Remark \ref{rem: fatto topologici}). 
\end{proof}

Observe that the statement of Lemma \ref{lemma: chiusi e interno in B} does not hold for all topological spaces (see \cite{Memp} for details). Recall that a map $f\colon X\to Y$ (between two topological spaces) \emph{preserves the interiors} if $\mathrm{Int}(f(A)) = f(\mathrm{Int}(A))$, for all $A\subseteq X$. Interior preserving maps are studied in \cite{Memp}. One can wonder whether the statement of Lemma \ref{lemma: chiusi e interno in B} could be extended to any subset of involutive bisemilattice (not only for closed subsets). Interestingly enough, the next result shows that the projection $\pi$ is interior preserving if and only if $\B$ is a Boolean algebra.

\begin{theorem}\label{th: pi preserva l'interiore}
Let $\B$ an involutive bisemilattice carrying a faithful state. The following facts are equivalent: 
\begin{enumerate}
\item $\B=\bool$; 
\item $\pi\colon\B\to\bool$ is an interior preserving map; 
\item $\sigma(\bool)$ is open (closed, saturated) in $\B$, for every section $\sigma\colon\bool\to \B$.
\end{enumerate}
\end{theorem}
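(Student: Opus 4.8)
The plan is to establish the cyclic chain $(1)\Rightarrow(2)\Rightarrow(3)\Rightarrow(1)$. Throughout I would exploit that, since the faithful state makes $\bool$ a metric space, the fibres of $\pi$ are exactly the $d_s$-distance-zero classes (Proposition \ref{prop: il limite diretto e il quoziente di Kolmogorov}): thus $\B=\bool$ holds precisely when every such fibre is a singleton, i.e. when $\pi$ is a bijection. The implication $(1)\Rightarrow(2)$ is then immediate: if $\B=\bool$ then $\pi$ is a continuous bijection which, by Lemma \ref{lemma: saturi rispetto a pi}, is open and closed, hence a homeomorphism, and a homeomorphism trivially satisfies $\mathrm{Int}(\pi(A))=\pi(\mathrm{Int}(A))$ for every $A\subseteq B$.

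For $(2)\Rightarrow(3)$ I would test the interior-preservation hypothesis on the set $A=\sigma(\bool)$, for an arbitrary section $\sigma$ (which exists by Theorem \ref{th: fatti su B e Bool}). Since $\pi\circ\sigma=\mathrm{id}$, we have $\pi(A)=\bool$, which is open, so $\mathrm{Int}(\pi(A))=\bool$; interior preservation then yields $\pi(\mathrm{Int}(\sigma(\bool)))=\bool$. Now $\sigma(\bool)$ is a transversal of $\pi$ (it meets each fibre exactly once), so $\pi$ is injective on $\sigma(\bool)$; combined with $\mathrm{Int}(\sigma(\bool))\subseteq\sigma(\bool)$ and the fact that both sets have the same $\pi$-image $\bool$, this forces $\mathrm{Int}(\sigma(\bool))=\sigma(\bool)$, i.e. $\sigma(\bool)$ is open. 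By Lemma \ref{lemma: saturi rispetto a pi} an open set is $\pi$-saturated, and a saturated transversal satisfies $\sigma(\bool)=\pi^{-1}(\pi(\sigma(\bool)))=\pi^{-1}(\bool)=\B$, whence it is also closed and saturated. This gives $(3)$ for every section, and in passing shows the three properties listed in $(3)$ coincide for transversals.

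Finally, for $(3)\Rightarrow(1)$ I would argue contrapositively. If $\B\neq\bool$, some fibre $[a]_\sim$ contains two distinct elements $a\neq b$ (here faithfulness of $s$, via Proposition \ref{prop: il limite diretto e il quoziente di Kolmogorov}, guarantees the fibres are exactly the $\sim$-classes). Using the Axiom of Choice as in Theorem \ref{th: fatti su B e Bool}, I would build a section $\sigma$ whose image contains $a$ but not $b$; then $b\in\pi^{-1}(\pi(\sigma(\bool)))\setminus\sigma(\bool)$, so $\sigma(\bool)$ is not saturated, and by Lemma \ref{lemma: saturi rispetto a pi} it is therefore neither open nor closed. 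Thus $(3)$ fails, completing the contrapositive.

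The step I expect to be the crux is $(2)\Rightarrow(3)$: the subtle point is converting the purely topological hypothesis of interior preservation into the openness of $\sigma(\bool)$, which hinges on recognising $\sigma(\bool)$ as a transversal on which $\pi$ is injective, so that equality of $\pi$-images forces $\mathrm{Int}(\sigma(\bool))=\sigma(\bool)$. Everything else rests on the already-established fact (Lemma \ref{lemma: saturi rispetto a pi}) that open and closed subsets of $\B$ are $\pi$-saturated, together with the elementary observation that a saturated transversal must be the whole space.
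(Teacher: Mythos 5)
Your proof is correct, but it follows a genuinely different route from the paper's. The paper proves the two non-trivial implications separately and directly in terms of $(1)$: for $(2)\Rightarrow(1)$ it argues by contraposition with a concrete counterexample set, namely $\B\setminus\{b\}$ for $b$ in a fibre of size $\geq 2$, using that singletons of $\bool$ are closed (since $d_\infty$ is a metric) to compute $\pi(\mathrm{Int}(\B\setminus\{b\})) = \bool\setminus\{[a]\}$ while $\mathrm{Int}(\pi(\B\setminus\{b\})) = \bool$; and for $(3)\Rightarrow(1)$ it argues directly: any open/closed/saturated $\sigma(\bool)$ is $\pi$-saturated by Lemma \ref{lemma: saturi rispetto a pi}, hence equals $\pi^{-1}(\bool)=\B$, so $\sigma$ inverts $\pi$. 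You instead close a cyclic chain $(1)\Rightarrow(2)\Rightarrow(3)\Rightarrow(1)$: your $(2)\Rightarrow(3)$ — testing interior preservation on the transversal $\sigma(\bool)$ itself and using injectivity of $\pi$ on it to force $\mathrm{Int}(\sigma(\bool))=\sigma(\bool)$ — has no counterpart in the paper, and your $(3)\Rightarrow(1)$ is the contrapositive of the paper's, requiring you to build a section that misses one point of a fat fibre (legitimate, since the proof of Theorem \ref{th: fatti su B e Bool} shows every choice function yields a continuous section). What each buys: the paper's treatment of $(2)$ needs no sections at all but does need the $T_1$ property of $\bool$, whereas your argument avoids the closed-singleton step entirely at the cost of leaning harder on the section machinery and on Lemma \ref{lemma: saturi rispetto a pi}; your chain also makes explicit, in passing, that the three conditions in $(3)$ all collapse to $\sigma(\bool)=\B$.
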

\begin{proof}
(1) $\Rightarrow$ (2) is trivial (as $\pi = id$). \\
(2) $\Rightarrow$ (1). We reason by contraposition, and suppose that $\B\neq \bool$. This implies that there exists an element $[a]\in A_{\infty}$ such that $\mid \pi^{-1}([a])\mid \geq 2 $. Let $b\in\pi^{-1}[a]$. Observe that $\bool\setminus\{[a]\}$ is open (as $\{[a]\}$ is closed), thus, since $\pi$ is continuous, $\B\setminus\{\pi^{-1}([a])\}$ is open. This implies that $\mathrm{Int}(\B\setminus\{b\}) = \B\setminus\{\pi^{-1}([a])\}$. Then, $\pi(\mathrm{Int}(\B\setminus\{b\})) = \pi(\B\setminus\{\pi^{-1}([a])\})= \bool\setminus\{[a]\}$. On the other hand, $\mathrm{Int}(\pi(\B\setminus\{b\})) = \bool$, since $\mid \pi^{-1}([a])\mid \geq 2 $, which shows that $\pi$ does not preserve interiors. \\
\noindent
(1) $\Rightarrow$ (3) is obvious. \\
\noindent
(3) $\Rightarrow$ (1). Let $\sigma(\bool)$ be open (closed, saturated) in $B$. Then, by Lemma \ref{lemma: saturi rispetto a pi}, $\sigma(\bool)$ is $\pi$-saturated, i.e. $\sigma(\bool)= \pi^{-1}(\pi(\sigma(\bool))) = \pi^{-1}(\bool) = \B $, so $\pi$ is a bijection being $\sigma$ its inverse.
\end{proof}


Recall that, for a topological space $X$, an open set $U\subseteq X$ is an \emph{open regular set} if $U=\mathrm{Int}(\overline{U})$ (where $\overline{U}$ indicates the closure of $U$). To keep in mind the difference between an open, and an open regular set, consider $\mathbb{R}$ topologised (as usual) with the Euclidian topology. Then $(0,1)$ is an example of an open regular set, while $U = (0,1)\cup (1,2)$ is an open set which is not regular, as $\mathrm{Int}(\overline{U}) = (0,2) $. The set of open regular sets $Reg(X)$ of a topological space $X$ can be turned into a (complete) Boolean algebra (see, for instance, \cite{GivantHalmos}) $\mathbf{Reg}(X)=\langle Reg(X), \cap, \vee, \smallsetminus,\emptyset, X \rangle$, where $U\vee V\coloneqq \mathrm{Int}(\overline{A\cup B})$. Moreover, the Boolean algebra of $\mathbf{Clopen}(X)$ (of the clopen sets of $X$) is a subalgebra of $\mathbf{Reg}(X)$. Despite the fact that an involutive bisemilattice $\B$ and its Booleanisation $\bool$ are not homeomorphic (except in the trivial case $\B=\bool$), surprisingly enough, the Boolean algebras of regular sets arising from $\B$ and $\bool$ are isomorphic, as shown in the following. 
 
\begin{theorem}\label{th: corrispondenza aperti IBSL e Booleanizzazione}
Let $\B\in\inj$ carrying a faithful state. The projection $\pi\colon\B\to\bool$ induces a bijection between $\mathsf{Open}(\B)$ and $\mathsf{Open}(\bool)$, the open sets of $\B$ and $\bool$, respectively. \\
Moreover, the Boolean algebras $\mathbf{Reg}(B)$ and $\mathbf{Reg}(A_{\infty})$ are isomorphic.
 \end{theorem}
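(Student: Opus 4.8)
The plan is to split the argument into the two displayed claims and to exploit throughout that, by Lemma \ref{lemma: saturi rispetto a pi}, the projection $\pi\colon\B\to\bool$ is a continuous, open, and closed surjection under which every open (and every closed) set is saturated. For the first claim I would set up the two candidate maps $U\mapsto\pi(U)$ from $\mathsf{Open}(\B)$ to $\mathsf{Open}(\bool)$ and $V\mapsto\pi^{-1}(V)$ in the opposite direction: the former lands in $\mathsf{Open}(\bool)$ because $\pi$ is open, the latter in $\mathsf{Open}(\B)$ because $\pi$ is continuous. To see that they are mutually inverse I would note that $\pi(\pi^{-1}(V))=V$ since $\pi$ is surjective, while $\pi^{-1}(\pi(U))=U$ is exactly the saturation of open sets recorded in Lemma \ref{lemma: saturi rispetto a pi}. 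This already yields the bijection between the open-set lattices.

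For the second claim I would show that this bijection restricts to the regular open sets and is then a Boolean isomorphism $\Theta\colon\mathbf{Reg}(B)\to\mathbf{Reg}(A_{\infty})$, $\Theta(U)=\pi(U)$. The key computation is that $\pi$ sends regular open sets to regular open sets: for $U=\mathrm{Int}(\overline{U})$ I would compute
\[
\mathrm{Int}(\overline{\pi(U)})=\mathrm{Int}(\pi(\overline{U}))=\pi(\mathrm{Int}(\overline{U}))=\pi(U),
\]
using $\overline{\pi(U)}=\pi(\overline{U})$ from Remark \ref{rem: fatto topologici}(2) and, crucially, Lemma \ref{lemma: chiusi e interno in B} (applied to the closed set $\overline{U}$) for the middle equality. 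For the inverse direction I would first establish $\overline{\pi^{-1}(V)}=\pi^{-1}(\overline{V})$ — the inclusion $\subseteq$ is continuity, while $\supseteq$ follows because $\pi$ is closed and $\overline{\pi^{-1}(V)}$ is saturated — and then, for $V$ regular open, apply Remark \ref{rem: fatto topologici}(1) to obtain $\mathrm{Int}(\overline{\pi^{-1}(V)})=\mathrm{Int}(\pi^{-1}(\overline{V}))=\pi^{-1}(\mathrm{Int}(\overline{V}))=\pi^{-1}(V)$.

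It then remains to check that $\Theta$ preserves the operations of $\mathbf{Reg}$. Meet is intersection, and $\pi(U\cap U')=\pi(U)\cap\pi(U')$ holds because $U,U'$ are saturated; top and bottom are preserved by surjectivity. For the join $U\vee U'=\mathrm{Int}(\overline{U\cup U'})$ I would run the same closure/interior swap, $\pi(\mathrm{Int}(\overline{U\cup U'}))=\mathrm{Int}(\pi(\overline{U\cup U'}))=\mathrm{Int}(\overline{\pi(U)\cup\pi(U')})$, again by Lemma \ref{lemma: chiusi e interno in B} and Remark \ref{rem: fatto topologici}(2). For the complement of $U$ in $\mathbf{Reg}(B)$, namely $\mathrm{Int}(B\setminus U)=B\setminus\overline{U}$, I would use that $\overline{U}$ is saturated together with the elementary fact that $\pi(B\setminus W)=A_{\infty}\setminus\pi(W)$ for any saturated $W$ (a direct consequence of surjectivity and saturation), giving $\pi(B\setminus\overline{U})=A_{\infty}\setminus\pi(\overline{U})=A_{\infty}\setminus\overline{\pi(U)}$, which is the complement of $\pi(U)$ in $\mathbf{Reg}(A_{\infty})$.

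The main obstacle is not the operation-by-operation bookkeeping but the preservation of regular-openness in both directions: this is precisely where one needs Lemma \ref{lemma: chiusi e interno in B}, $\pi(\mathrm{Int}(C))=\mathrm{Int}(\pi(C))$ for closed $C$, a property which (as the authors stress after that lemma) fails for arbitrary open/closed maps and genuinely relies on the pseudometric structure of $\B$ and the faithfulness of the state. Once the two regular-openness computations go through, every remaining equality reduces to the saturation identities already available for $\pi$.
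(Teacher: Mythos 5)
Your proposal is correct and follows essentially the same route as the paper: the bijection on open sets comes from saturation (Lemma \ref{lemma: saturi rispetto a pi}) together with surjectivity, and regular-openness is transported through $\pi$ via Lemma \ref{lemma: chiusi e interno in B} and the closure identity of Remark \ref{rem: fatto topologici}, with the Boolean operations then handled by the same saturation bookkeeping. If anything, you are more careful than the paper on two points: you explicitly check that $\pi^{-1}$ carries regular open sets of $\bool$ to regular open sets of $\B$ (which is what actually gives surjectivity of the restriction of $\pi$ to $\mathbf{Reg}(B)$, a step the paper leaves implicit), and you verify complement preservation directly instead of deducing it from the fact that a bounded lattice homomorphism between Boolean algebras automatically preserves complements.
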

 \begin{proof}
The fact that $\pi$ is a bijection between $\mathsf{Open}(\B)$ and $\mathsf{Open}(\bool)$ follows from Lemma \ref{lemma: saturi rispetto a pi} (and the surjectivity of $\pi$). 
The isomorphism between $\mathbf{Reg}(B)$ and $\mathbf{Reg}(A_{\infty})$ is given by the projection $\pi$, restricted to $\mathbf{Reg}(B)$. We first show that the map is well defined, i.e. that given an open regular set $U\in \mathbf{Reg}(B)$, then $\pi(U)\in\mathrm{Reg}(\bool)$. To show regularity, observe that 
\begin{align*}
\pi(U) & = \pi(\mathrm{Int}(\overline{U})) & (U \text{ is regular})  \\
& = \mathrm{Int}(\pi(\overline{U})) & (\text{Lemma \ref{lemma: chiusi e interno in B}}) \\
& = \mathrm{Int}(\overline{\pi(U)}) & (\pi \text{ is continuous, open and closed})
\end{align*}
To conclude the proof, we only need to check that $\pi$ is a homomorphism (with respect to the Boolean operations of $\mathbf{Reg}(B)$ and $\mathbf{Reg}(A_{\infty})$). With respect to the constants, observe that $\pi(\emptyset)= \emptyset$ and, since $\pi$ is surjective, $\pi(B) = A_{\infty}$. Now, let $U,V\in\mathrm{Reg}(B)$, then $\pi(U)\cap\pi(V) = \pi\circ\pi^{-1}(\pi(U)\cap\pi(V)) = \pi(\pi^{-1}(\pi(U))\cap \pi^{-1}(\pi(V))) = \pi(U\cap V)$, where the last equality follows from Lemma \ref{lemma: saturi rispetto a pi}. 
Moreover, 
\begin{align*}
\pi(U\vee V) & = \pi(\mathrm{Int}(\overline{U\cup V})) &   \\
& = \mathrm{Int}(\pi(\overline{U\cup V})) & (\text{Lemma \ref{lemma: chiusi e interno in B}}) \\
& = \mathrm{Int}(\overline{\pi(U\cup V)}) & (\pi \text{ is continuous, open and closed}) \\
& = \mathrm{Int}(\overline{\pi(U)\cup \pi(V)}) & \\
& = \pi(U)\vee\pi(V).
\end{align*}
Since we have shown that $\pi$ preserves the constants and the binary operations, it follows that it preserves also the unary operation $\setminus$, hence we are done. 
 \end{proof}

\begin{theorem}[Topological characterization of states]
Let $s$ be a faithful state over $\B$ and $t\colon \B\to [0,1]$ a continuous map such that $t\circ\sigma = \Phi(s)$, for any section $\sigma\colon\bool\to \B$. Then $t = s$.
\end{theorem}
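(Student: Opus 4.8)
The plan is to reduce the desired equality $t=s$ to agreement on a dense subset, and then to close the gap by continuity. First I would invoke Theorem \ref{th: fatti su B e Bool}: among all sections of $\pi$ it produces one, say $\sigma\colon\bool\to\B$, whose image $\sigma(\bool)$ is dense in $\B$ and which moreover satisfies $s\circ\sigma=\Phi(s)$ (part (2) of that theorem). This is precisely the section I would single out from the family of sections over which the hypothesis on $t$ quantifies.

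Next I would combine the two identities $t\circ\sigma=\Phi(s)$ (the hypothesis, instantiated at this $\sigma$) and $s\circ\sigma=\Phi(s)$ (Theorem \ref{th: fatti su B e Bool}(2)) to get $t\circ\sigma=s\circ\sigma$. In other words, $t$ and $s$ coincide on the dense set $\sigma(\bool)\subseteq B$. The whole content of the proof is then to promote this agreement on a dense subset to agreement on all of $\B$.

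For the final step I would observe that both $t$ and $s$ are continuous maps from $\B$ (topologised by $\mathcal{T}_{d_s}$) into $[0,1]$: continuity of $t$ is assumed, while continuity of $s$ is exactly Remark \ref{rem: lo stato e sempre una funzione continua}. Given an arbitrary $x\in B$, density lets me pick a sequence $y_n\in\sigma(\bool)$ with $y_n\to x$ (such a sequence exists because $\mathcal{T}_{d_s}$ is induced by a pseudometric, hence first countable), and then continuity yields
\[
t(x)=\lim_{n\to\infty} t(y_n)=\lim_{n\to\infty} s(y_n)=s(x),
\]
the middle equality holding since $t(y_n)=s(y_n)$ for every $n$. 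As $x$ was arbitrary, this gives $t=s$.

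The argument is essentially routine, and the genuinely substantive input is the density statement of Theorem \ref{th: fatti su B e Bool}, which I would regard as the crux. The only point deserving care is that $\B$ carries merely a \emph{pseudo}metric, in which limits need not be unique; however the two limits above are computed in the codomain $[0,1]$, which is a genuine metric (hence Hausdorff) space, so they are uniquely determined and the identification $t(x)=s(x)$ is legitimate.
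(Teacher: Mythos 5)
Your proposal is correct and follows essentially the same route as the paper: instantiate the hypothesis at the dense-image section provided by Theorem \ref{th: fatti su B e Bool}, combine it with part (2) of that theorem to get agreement of $t$ and $s$ on $\sigma(\bool)$, and conclude by continuity of both maps (Remark \ref{rem: lo stato e sempre una funzione continua} for $s$) together with the Hausdorffness of $[0,1]$. The only difference is cosmetic: the paper invokes the general fact that continuous maps into a Hausdorff space agreeing on a dense set are equal, while you spell out the same fact via sequences, which is legitimate since the pseudometric topology is first countable.
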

\begin{proof}
By assumption, $t\circ\sigma = \Phi(s)$, for any section $\sigma\colon\bool\to \B$. This implies that the two maps $s$ and $t$ coincide over a dense subset $\sigma(\bool)$ of $\B$ (in virtue of Theorem \ref{th: fatti su B e Bool}-(2)). Therefore, since both $s$ and $t$ are continuous ($s$ by Remark \ref{rem: lo stato e sempre una funzione continua}, $t$ by assumption) and $[0,1]$ is Hausdorff, we have $t =s$. 
\end{proof}

\begin{remark}
As we have seen, in general, the spaces $\B$ and $\bool$ are not homeomorphic. However, it follows from the general theory of Kolmogorov quotients (see \cite[Theorem 8.6]{finlandese}) that, under the assumption that they are both Alexandrov discrete\footnote{A topological space $X$ is \emph{Alexandrov discrete} when the arbitrary intersection of open sets is an open set.} they are homotopically equivalent. Obviously, the equivalence holds in the particular case whether $\B$ is finite.
\end{remark}

%
%
 

%


\section{Conclusion and further work}\label{sec: conclusioni}

In this work, we have shown how to define a notion of state on P\l onka sums of Boolean algebras, with the aim of expressing the probability for elements of an involutive bisemilattice, a variety associated to the logic PWK. In particular, we have shown that the (non-trivial) elements of the class $\N$ (of involutive bisemilattices with no trivial algebra in the P\l onka sum representation) always carry a state. The class $\N$ plays a relevant role in logic, as algebraic counterpart of the extension of PWK by adding the ex-falso quodlibet. Moreover, we have exploited the connections between such notion, the probability measures carried by Boolean algebras in a P\l onka sum and the Booleanisation of an involutive bisemilattice. These connections are crucial in the study of the completion and the topology induced by a state over an involutive bisemilattice. 

This work sheds a further light on the possibility of developing the theory of probability beyond the boundaries of \emph{classical} events, namely elements of a Boolean algebra. To the best of our knowledge, this consists of the first attempt to lift (finitely additive) probability measures from Boolean algebras to P\l onka sums of Boolean algebras. For this reason, many theoretical problems, as well as potential applications are not examined in the present work. At first, it shall be noticed that there is nothing special behind the choice of Boolean algebras, a part the fact that P\l onka sums of Boolean algebra play the important role of characterising the algebraic counterparts of Paraconsistent Weak Kleene logic and its extensions. The ideas developed here could be used, in principle, to define states for varieties that are represented as P\l onka sums of classes of algebras admitting states, such as MV-algebras, Goedel algebras, Heyting algebras, just to mention some for which a theory of states has been developed. On the other hand, a deeper investigation about the connection between P\l onka sums and certain logics has been conducted in \cite{BonzioMorascoPraBaldi} and \cite{BonzioPraBaldi}. 

A relevant question that we leave for further investigations is the possibility of characterising states over involutive bisemilattices as \emph{coherent books} over a (finite) set of events of the extension of the logic PWK. Coherent books have been introduced, in the classical case, by de Finetti \cite{DeFinetti31,DeFinetti74}, via a specific (reversible) betting game and are shown to be in one-to-one correspondence with (finitely additive) probability measures over the Boolean algebra generated by the events considered. This kind of abstract betting scenario has been used also to characterise states for non-classical structures \cite{MUNDICIbookmaking}.

We have shown (see Theorem \ref{th: rappresentazione integrale stati IBSL}) that states over involutive bisemilattices correspond to integrals on the dual space of the Booleanisation. It makes sense to ask whether this correspondence can be extended to faithful states, relying on the integral representation proved for faithful states over free MV-algebras in \cite{flaminio2019}. 

The theory of states we developed could, perhaps, find potential applications also in the field of \emph{knowledge representation}. This is mainly due to the fact that states break into probability measures over the Boolean algebras in the P\l onka sum representation. One may interpret the semilattice of indexes, involved in the representation, to model, for instance, situations of branching time\footnote{A similar idea is developed from the construction of horizontal sums in \cite{temporalBL}.} (as the index set is, in general, not a chain). A state, then, encapsulates information related to the probabilities of classical events (Boolean algebras) located in every point (indexes) of the structure. This might be used, in principle, also to analyse conditional bettings or counterfactual situations, under the assumption, for instance, that events are related when there is a homomorphism connecting the algebras they belong to.   


\section*{Appendix A}

Our definition of state relies (see Definition \ref{def: stato su IBSL}) on the assumption that two elements $a,b\in B$ of an involutive bisemilattice $\B$ are logically incompatible provided that $a\wedge^{\B} b = 0_{i}$, where $0_{i}$ is the bottom element of the Boolean algebra (in the P\l onka sum representation of $\B$) where the operation $\wedge$ is computed. One could question this principle and understand two elements $a,b\in B$ as incompatible, in case $a\wedge^{\B} b = 0$.
This leads to a different definition of state obtained, by replacing condition (2) in Definition \ref{def: stato su IBSL} with the following:
 
\begin{equation}\label{eq: definizione alternativa stato}
s(a\vee b) = s(a) + s(b)  \text{ provided that  } a\wedge b = 0. 
\end{equation}

However, since the element $0$ of an involutive bisemilattice always belongs to the Boolean algebra (in the P\l onka sum) whose index is the least element in the semilattice $\langle I,\leq\rangle$ of indexes, this latter choice leads to the following consequence.

\begin{proposition}\label{prop: stati nel caso N0=0}
Let $\B$ and involutive bisemilattice. Then the following are equivalent: 
\begin{enumerate}
\item $s\colon\B\to[0,1]$ satisfies $s(1) = 1$ and condition \eqref{eq: definizione alternativa stato}; 
\item $s_{i_0}$ is a (finitely additive) probability measure over the Boolean algebra $\A_{i_0}$ where $i_0$ is the minimum element in $I$.
\end{enumerate}
\end{proposition}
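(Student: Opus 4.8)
The plan is to show that the alternative incompatibility condition \eqref{eq: definizione alternativa stato} constrains the behaviour of $s$ only on the bottom component $\A_{i_0}$, so that both conditions collapse to the single requirement that $s_{i_0}$ be a finitely additive probability measure over $\A_{i_0}$. The engine of the whole argument is the observation that $a\wedge^{\B} b = 0$ can occur \emph{only} when $a$ and $b$ both lie in $A_{i_0}$.

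First I would record that, by the clause on constants in Definition \ref{def: Plonka sum of algebras}, one has $0^{\B} = 0^{\A_{i_0}} = 0_{i_0}$ and $1^{\B} = 1^{\A_{i_0}} = 1_{i_0}$; in particular the global bottom and top of $\B$ are the bottom and top of $\A_{i_0}$. Next I would establish the key combinatorial fact: if $a\in A_i$ and $b\in A_j$, then $a\wedge^{\B} b = p_{ik}(a)\wedge^{\A_k} p_{jk}(b) = 0_k$ with $k = i\vee j$, and since the components have pairwise disjoint universes, $0_k = 0 = 0_{i_0}$ forces $k = i_0$. But $i_0$ is the least element, so $i_0\leq i\leq i\vee j = k = i_0$ and likewise for $j$, whence $i = j = i_0$. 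Thus $a\wedge^{\B} b = 0$ holds precisely when $a,b\in A_{i_0}$ and $a\wedge^{\A_{i_0}} b = 0_{i_0}$, in which case $a\vee^{\B} b$ is simply computed inside $\A_{i_0}$.

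With this in hand both implications are immediate. For $(1)\Rightarrow(2)$: from $s(1)=1$ and $1 = 1_{i_0}$ we get $s_{i_0}(1_{i_0}) = 1$; and for $a,b\in A_{i_0}$ with $a\wedge^{\A_{i_0}} b = 0_{i_0}$ the observation gives $a\wedge^{\B} b = 0$, so \eqref{eq: definizione alternativa stato} yields $s_{i_0}(a\vee b) = s(a\vee^{\B} b) = s(a) + s(b) = s_{i_0}(a) + s_{i_0}(b)$, which is exactly finite additivity. Conversely for $(2)\Rightarrow(1)$: $s(1) = s(1_{i_0}) = s_{i_0}(1_{i_0}) = 1$, and whenever $a\wedge^{\B} b = 0$ the observation forces $a,b\in A_{i_0}$ with meet $0_{i_0}$, so additivity of $s_{i_0}$ gives $s(a\vee b) = s(a) + s(b)$, establishing \eqref{eq: definizione alternativa stato}.

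There is no genuine obstacle here: the content lies entirely in the combinatorial observation, and it is exactly that observation which exposes the alternative notion as unsatisfactory. Indeed, the argument shows that the values of $s$ on every component other than $\A_{i_0}$ remain wholly unconstrained by \eqref{eq: definizione alternativa stato}, so this candidate notion of state fails to ``see'' the rest of the P\l onka sum---precisely the reason it is discarded in favour of Definition \ref{def: stato su IBSL}.
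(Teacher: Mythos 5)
Your proof is correct and follows essentially the same route as the paper's: the paper's entire argument for (1) $\Rightarrow$ (2) is precisely your key observation that $a\wedge^{\B} b=0$ forces $a,b\in A_{i_0}$ (which the paper declares immediate and you justify properly via disjointness of the component universes and minimality of $i_0$). The only divergence is in (2) $\Rightarrow$ (1), where the paper exhibits one particular extension of a given measure $s_{i_0}$ (constant $\alpha\in(0,1)$ outside $A_{i_0}$), whereas you show that \emph{any} map $s$ whose restriction to $\A_{i_0}$ is a probability measure satisfies \eqref{eq: definizione alternativa stato} --- a mild strengthening that makes the same point the paper draws afterwards, namely that the alternative condition leaves the values outside $A_{i_0}$ completely unconstrained.
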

\begin{proof}
(1) $\Rightarrow$ (2). Immediate by observing that $1\in A_{i_0}$ and that, for any arbitrary pair of elements $a,b\in B$, $a\land b =0$ implies that $a,b\in A_{i_0}$.\\

\noindent
(2) $\Rightarrow$ (1). Let $s_{i_{0}}\colon \A_{i_0}\to[0,1]$ be any finitely additive probability measure over $\A_{i_{0}}$. Then, the map $s\colon\B\to[0,1]$
\[
s(x)\coloneqq\begin{cases}
s_{i_{0}}(x) & \text{if $x\in A_{i_0}$,} \\
\alpha & \text{otherwise,}
\end{cases}
\]
for $\alpha\in (0,1)$ a fixed number, satisfies that $s(1) = 1$ and that $s(a\vee b) = s(a) + s(b)$, when $a\wedge b = 0$. Moreover, $s_{i_{0}}$ is the restriction of $s$ over $\A_{i_0}$.
\end{proof}

In words, the above result suggests that, this different notion of state, obtained by replacing (2) in Definition \ref{def: stato su IBSL} with \eqref{eq: definizione alternativa stato}, implies that only the elements belonging to the Boolean algebra $\A_{i_{0}}$ are actually measured following the standard rules of probability. 

\section*{Appendix B}

We have shown that (faithful) states on involutive bisemilattices are in correspondence with (regular) probability measures over the corresponding Booleanisations (see Theorem \ref{th: corrispondenza stati ISBL e booleanizzazione} and Theorem \ref{th: corrispondenza stati ISBL e booleanizzazione (caso faithful)}). However, there could be many non-isomorphic (injective) involutive bisemilattices having the same Booleanisation.\footnote{It is clear that two isomorphic involutive bisemilattices have isomorphic Booleanisations.} The problem of characterising all injective involutive bisemilattices having the same Booleanisation (outlined at the end of Section \ref{Sec: faithful states}) is not an easy task and the aim of present appendix is (to try) to provide some reasons.

At first, we observe that a knowledge of the lattice of subalgebras of Boolean algebras (see \cite{Gra72,Rao1979}) is not sufficient to give an answer to the problem, even in case the index set $I$ is finite and, consequently, the Booleanisation coincides with the Boolean algebra (in the P\l onka sum) whose index is the top element of the lattice of indexes. Indeed the structure of an injective involutive bisemilattices strongly relies on all possible embeddings one can have between algebras in the sum. To give a more precise intuition, consider the involutive bisemilattice introduced in Example \ref{ex: operazioni Plonka} and a different one, constructed recurring to the same index set and the same algebras, but with different embeddings, namely $p_{ik}(a) = d = p_{jk}(b)$. The two involutive bisemilattices have the same Booleanisation $\A_{k}$, although they are not isomorphic; this can be checked directly, or also reasoning the categorical equivalence between involutive bisemilattices and semilattice direct systems of Boolean algebras shown in \cite{Loi} (see also \cite{SB18}). 

Given a Boolean algebra $\bool$, an obvious example of (finite) injective involutive bisemilattice is one constructed over a direct system formed by subalgebras of $\bool$ (with $\bool$ as top element of the index set) and whose homomorphisms are inclusions. We will call these kinds of involutive bisemilattices \emph{inclusive} (see Definition below). We wonder whether it is possible to count the number of non-isomorphic \emph{inclusive} involutive bisemilattices having the same Booleanisation. We will show some peculiar cases, which, in our view, give a gist of the hardness of the problem announced in Section \ref{Sec: faithful states} (which we cannot solve here).\footnote{A similar problem, connecting with counting the number of a specific subclass of involutive bisemilattices has been addressed in \cite{BonzioValota}.}\\

In order to define inclusive involutive bisemilattices, let $\bool$ a finite Boolean algebra such that $\mid A_{\infty}\mid = 2^{n}$ and consider a collection $\{\A_{1},\dots,\A_{n}\}$ of distinct subalgebras of $\bool$ such that $\A_{1} = \A_{i_0}$, $\A_{n}=\bool$ and, for each $i,j\in\{1,\dots, n\}$, either $A_{i}\subset A_{j}$ or $A_{j}\subset A_{i}$. In other words, the collection of subalgebras $\{\A_{1},\dots,\A_{n}\}$ consists of a maximal subchain in the lattice of all subalgebras of $\bool$.\footnote{By \emph{maximal subchain} we  mean a sublattice which is a chain and contains a copy of any Boolean subalgebra (of $\bool$) with cardinality less or equal to $2^{n}$. }

\begin{definition}\label{def: IBSL inclusivi}
A finite involutive bisemilattice $\B$ with Booleanisation $\bool$ is \emph{inclusive} if it is the P\l onka sum over a direct system whose elements are subalgebras in the collection $\{\A_{1},\dots,\A_{n}\}$ and homomorphisms are inclusions from $A_{l}$ to $A_{m}$ in case $l\leq m$, for some $l,m\in \{1,\dots, n\}$.
\end{definition}
We will call \emph{weight} $k$ of an involutive bisemilattice $\B$, the number of algebras in its P\l onka sum representation.
Observe that all inclusive involutive bisemilattice are injective and that, since, we are considering finite algebras then the index set forms a lattice. \\

\textbf{Problem:} compute the number $N(\bool, k)$ of all non-isomorphic inclusive involutive bisemilattices of weight $k$ and Booleanisation $\bool$.\\

Expecting to find a formula counting $N(\bool, k)$ is a hopeless effort. Indeed, counting the number of non-isomorphic inclusive involutive bisemilattice constructed over a direct system containing $k$ copies of the same algebra ($\bool$), is equivalent to counting the number of all non-isomorphic finite lattices with $k$ elements. This is a problem for which no formula is known to work and which is indeed solved by a specific algorithm \cite{HJ02} (implemented and improved in the case of modular lattices in \cite{Jipsencounting}). 
However, there are fortunate cases, where it is possible to find a formula counting $N(\bool, k)$. We address the case where $\B$ is an inclusive involutive bisemilattice such that $B\smallsetminus \{A_{1},A_{\infty}\}$ consists of $k-2$ distinct subalgebras of $\bool$ (thus, $k-2\leq n$). We refer to the number of all non-isomorphic inclusive involutive bisemilattices, in this first case, as $N_{d}(\bool, k)$.\\
\noindent
To make an example of the case under consideration here, let $\bool$ be the eight-elements Boolean algebra ($n=3$). Then the collection $\{\A_{1},\A_{2}, \A_{3}\}$ consists of a copy of the two-elements Boolean algebra ($\A_1)$, a copy of the four-elements Boolean algebra ($\A_{2}$) and the eight-elements ($\A_{3}$) one. It can be immediately checked that, for instance, for $k=4$, we have $N_{d}(\bool, k) = 8$. The P\l onka sum representation of the eight non-isomorphic involutive bisemilattices are depicted in the following drawing (where arrows stands for inclusions).

\[
 \begin{tikzcd}
 \mathbf{A}_{\infty}  \\
  \mathbf{A}_{2}\arrow[u] \\
   \mathbf{A}_{1}\arrow[u] \\
  \mathbf{A}_{1}\arrow[u] 
 \end{tikzcd}
  \hspace{1cm}
  \begin{tikzcd}
 \mathbf{A}_{\infty}  \\
  \mathbf{A}_{\infty}\arrow[u] \\
   \mathbf{A}_{1}\arrow[u] \\
  \mathbf{A}_{1}\arrow[u] 
 \end{tikzcd}
 \hspace{1cm}
  \begin{tikzcd}
 \mathbf{A}_{\infty}  \\
  \mathbf{A}_{\infty}\arrow[u] \\
   \mathbf{A}_{2}\arrow[u] \\
  \mathbf{A}_{1}\arrow[u] 
 \end{tikzcd}
 \hspace{1cm}
  \begin{tikzcd}
 \mathbf{A}_{\infty}  \\
  \mathbf{A}_{\infty}\arrow[u] \\
   \mathbf{A}_{2}\arrow[u] \\
  \mathbf{A}_{2}\arrow[u] 
 \end{tikzcd}
  \hspace{1cm}
   \begin{tikzcd}
 & \mathbf{A}_{\infty} &  \\
  \mathbf{A}_{2}\arrow[ur] & &   \mathbf{A}_{1}\arrow[ul] \\
&  \mathbf{A}_{1}\arrow[ul]\arrow[ur] &
 \end{tikzcd}
\]

\[
 \begin{tikzcd}
 & \mathbf{A}_{\infty} &  \\
  \mathbf{A}_{\infty}\arrow[ur] & &   \mathbf{A}_{1}\arrow[ul] \\
&  \mathbf{A}_{1}\arrow[ul]\arrow[ur] &
 \end{tikzcd}
 \hspace{1cm}
 \begin{tikzcd}
 & \mathbf{A}_{\infty} &  \\
  \mathbf{A}_{\infty}\arrow[ur] & &   \mathbf{A}_{2}\arrow[ul] \\
&  \mathbf{A}_{1}\arrow[ul]\arrow[ur] &
 \end{tikzcd}
 \hspace{1cm}
 \begin{tikzcd}
 & \mathbf{A}_{\infty} &  \\
  \mathbf{A}_{\infty}\arrow[ur] & &   \mathbf{A}_{2}\arrow[ul] \\
&  \mathbf{A}_{2}\arrow[ul]\arrow[ur] &
 \end{tikzcd}
\]

Fortunately, in this case, it is not necessary to recur to the algorithm counting all (non-isomorphic) finite lattices.
To see why, we begin by providing an useful lemma, which allows us to count the number of (non-isomorphic) inclusive involutive bisemilattice which differs only for the Boolean algebra whose index is the least element in the index lattice. To this end, let $X=\{1,\dots, n\}$ consisting of the first $n$ natural numbers. Let $1\leq h\leq n$ be fixed, and, for each $s\in X$, let $\mathcal{P}_{_s}(h)$ be the set consisting of all subsets of $X$ of cardinality $h$, having $s$ as minimum element (with respect to the natural linear order over $X$). 
\begin{lemma}\label{lem: lemmino ausiliario}
$\displaystyle\sum_{s=1}^{n}s\mid\mathcal{P}_{_s}(h)\mid = \binom{n+1}{h+1}$.
\end{lemma}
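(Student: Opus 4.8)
The plan is to first put $|\mathcal{P}_{_s}(h)|$ into closed form and thereby reduce the statement to a standard binomial identity. A subset of $X=\{1,\dots,n\}$ of cardinality $h$ whose least element is $s$ is obtained by adjoining to $s$ exactly $h-1$ elements chosen among the $n-s$ elements of $X$ that exceed $s$; hence $|\mathcal{P}_{_s}(h)| = \binom{n-s}{h-1}$. The lemma is then equivalent to
\[
\sum_{s=1}^{n} s\binom{n-s}{h-1} = \binom{n+1}{h+1}.
\]

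The cleanest route, which I would present first, is a bijective (double-counting) argument. The left-hand side counts the pairs $(j,S)$ in which $S$ is an $h$-element subset of $\{1,\dots,n\}$ and $j$ is an integer with $1\le j\le \min S$: grouping by $s=\min S$ yields $s$ choices for $j$ and $|\mathcal{P}_{_s}(h)|$ choices for $S$. I would then exhibit the map $(j,S)\mapsto \{j\}\cup\{x+1 : x\in S\}$ and argue that it is a bijection from the set of such pairs onto the family of $(h+1)$-element subsets of $\{1,\dots,n+1\}$, whose cardinality is $\binom{n+1}{h+1}$. Since $j\le \min S < (\min S)+1$, the image is genuinely an $(h+1)$-set with minimum $j$ and largest element at most $n+1$; conversely, from a set $\{t_{0}<t_{1}<\dots<t_{h}\}$ one recovers $j=t_{0}$ and $S=\{t_{1}-1,\dots,t_{h}-1\}$, the inequality $t_{0}<t_{1}$ ensuring $j\le \min S$.

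As a backup (or cross-check) I would prove the identity algebraically: substituting $t=n-s$ and using the splitting $t\binom{t}{h-1}=h\binom{t}{h}+(h-1)\binom{t}{h-1}$, the sum collapses under the hockey-stick identity to $(n-h+1)\binom{n}{h}-h\binom{n}{h+1}$, which reduces to $\binom{n+1}{h+1}$ via the absorption relation $(n-h)\binom{n}{h}=(h+1)\binom{n}{h+1}$ together with Pascal's rule. I expect the only delicate point in either route to be bookkeeping rather than substance: in the bijective argument, verifying that the ordering constraints make the two assignments mutually inverse; in the algebraic argument, handling the index shift $s\mapsto n-s$ and the binomial absorption identities without an off-by-one slip in the summation range. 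Neither presents a conceptual obstacle, so the main ``risk'' lies precisely in the careful treatment of the shift $x\mapsto x+1$ and of the boundary terms.
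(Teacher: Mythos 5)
Your proposal is correct. Its first step is identical to the paper's: both establish $\mid\mathcal{P}_{_s}(h)\mid = \binom{n-s}{h-1}$ by choosing the $h-1$ elements lying above the minimum $s$. Where you diverge is on the resulting identity $\sum_{s=1}^{n} s\binom{n-s}{h-1} = \binom{n+1}{h+1}$: the paper merely asserts that it ``easily follows by induction over $n$ (for fixed $h$)'' and does not carry out the induction, whereas you prove it outright by double counting, reading the left-hand side as the number of pairs $(j,S)$ with $S$ an $h$-subset of $\{1,\dots,n\}$ and $1\le j\le \min S$, and mapping $(j,S)\mapsto\{j\}\cup\{x+1 : x\in S\}$ bijectively onto the $(h+1)$-subsets of $\{1,\dots,n+1\}$. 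Your inverse map $\{t_{0}<t_{1}<\dots<t_{h}\}\mapsto \bigl(t_{0},\{t_{1}-1,\dots,t_{h}-1\}\bigr)$ is well defined exactly because $t_{0}<t_{1}$, so the bijection is sound; your algebraic backup via the splitting $t\binom{t}{h-1}=h\binom{t}{h}+(h-1)\binom{t}{h-1}$, the hockey-stick identity, and absorption also checks out. What your route buys is a self-contained, explanatory proof: the bijection makes visible why $\binom{n+1}{h+1}$ is the answer (an $(h+1)$-subset of $\{1,\dots,n+1\}$ encodes both the choice of $j$ and a shifted copy of $S$). What the paper's route buys is brevity on the page, but only because the inductive verification is left entirely to the reader.
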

\begin{proof}
Observe that there are $n-s$ possible choices of subsets of $X$ of $h$ elements containing $s$, thus $\mid\mathcal{P}_{_s}(h)\mid =\displaystyle \binom{n-s}{h-1} $. Therefore $\displaystyle\sum_{s=1}^{n}s\mid\mathcal{P}_{_s}(h)\mid = \displaystyle\sum_{s=1}^{n}s \binom{n-s}{h-1}= \binom{n+1}{h+1}$, where the last equality easily follows by induction over $n$ (for fixed $h$).
\end{proof}

A simple example may help to grasp the content of the previous Lemma and its utility for our purposes. Let $n=3$, so $X=\{1,2,3\}$, and $h=2$. Then, $\mathcal{P}_{_1}(2) =\{ \{1,2\}, \{1,3\}\}$ and $\mathcal{P}_{_2}(2) =\{ \{2,3\}\}$. Thus, $\displaystyle\sum_{s=1}^{n}s\mid\mathcal{P}_{_s}(h)\mid = 1\cdot 2 + 2\cdot 1 = 4 = \binom{3+1}{2+1} = \binom{4}{3}$. 

\begin{theorem}\label{th: N_k caso distinti}
Let $\B$ be an inclusive involutive bisemilattice with Booleanisation $\bool$, whose P\l onka sum representation consists of $k-2$ distinct subalgebras of $\bool$. Then \\
\noindent
$N_{d}(\bool, k)= \displaystyle\binom{n+1}{k-1}a(k-2) $, where $a(k-2)$ is the number of acyclic graph with $k-2$ vertices.
\end{theorem}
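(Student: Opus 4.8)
The plan is to realise the bisemilattices counted by $N_d(\bool,k)$ as pairs consisting of a combinatorial \emph{shape} and a choice of \emph{subalgebra data}, and then to count the two ingredients separately. First I would fix once and for all the maximal subchain $\A_1\subset\dots\subset\A_n=\bool$, and recall, via Theorem \ref{th: Teorema di Plonka} and Definition \ref{def: IBSL inclusivi}, that such a $\B$ is determined up to isomorphism by its (finite) index lattice $I$ together with the monotone map assigning to each index the subalgebra sitting over it. Being finite, $I$ has a least element $i_0$ and a greatest element, the latter necessarily carrying $\bool$; the remaining $k-2$ indices constitute the \emph{middle}, and the hypothesis defining $N_d$ is precisely that these carry $k-2$ pairwise distinct subalgebras. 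The structural fact I would record at the outset is that any isomorphism between two such bisemilattices must respect the subalgebra labels, and, since the middle labels are distinct, must fix each middle index as well as $i_0$ and the top; hence two of them are isomorphic if and only if their labelled index lattices agree. This rigidity is what licenses multiplying the two counts.

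For the subalgebra data I would reason as follows. The $k-2$ distinct middle subalgebras are fixed by their set of ranks $S\subseteq\{1,\dots,n\}$ with $\lvert S\rvert=k-2$; the top is forced to be $\A_n$; and the subalgebra over $i_0$, being included in every middle one, must have rank at most $\min S$, giving exactly $\min S$ admissible choices. Summing over all admissible $S$ thus yields $\sum_{S}\min(S)=\sum_{s=1}^{n}s\,\lvert\mathcal{P}_{s}(k-2)\rvert$, which by Lemma \ref{lem: lemmino ausiliario} (taking $h=k-2$) equals $\binom{n+1}{k-1}$. This produces the first factor.

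For the shape I would read off the order that $I$ induces on the $k-2$ middle indices. Ordering them by the size of their subalgebras turns them into labelled vertices $1,\dots,k-2$, and the covering relations of the induced order, drawn between these vertices, give a graph on $k-2$ vertices that by monotonicity is compatible with the linear order. The core of the proof is to show that this passage is a bijection onto the acyclic graphs on $k-2$ vertices: in one direction the covering graph of an admissible middle order is identified with such a graph, and in the other a unique admissible order is reconstructed from a given acyclic graph. This supplies the factor $a(k-2)$; combining it with the subalgebra count, and invoking the rigidity above to guarantee that distinct pairs give non-isomorphic bisemilattices and that no pair is counted twice, delivers $N_d(\bool,k)=\binom{n+1}{k-1}\,a(k-2)$.

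The step I expect to be the main obstacle is exactly this shape bijection. One must determine precisely which partial orders on the $k-2$ middle elements are \emph{admissible}, in the sense that adjoining a fresh bottom and a fresh top produces a lattice, and verify that the covering relation carries them bijectively onto acyclic graphs. The delicate point is controlling the joins and meets of incomparable middle indices, so that the adjunction of the two extremal elements does not violate the lattice axioms; it is here, rather than in the routine subalgebra count, that the enumeration of acyclic graphs genuinely enters and where the argument must be carried out with care.
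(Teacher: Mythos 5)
Your decomposition is the same as the paper's, and your treatment of the first factor is correct and matches the paper's: grouping the $(k-2)$-element rank sets $S$ by their minimum and observing that the bottom algebra can be chosen in exactly $\min(S)$ ways yields $\sum_{S}\min(S)=\sum_{s=1}^{n}s\,\lvert\mathcal{P}_{s}(k-2)\rvert=\binom{n+1}{k-1}$ via Lemma \ref{lem: lemmino ausiliario} with $h=k-2$; your rigidity observation (distinct middle algebras force any isomorphism to preserve the rank-labelled index lattice) is a correct justification, left implicit in the paper, for multiplying the two counts.

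The gap is the second factor, and it is fatal in the form you propose. You announce, but do not prove, a bijection between the admissible orders on the $k-2$ middle indices and the acyclic graphs on $k-2$ vertices; no such bijection exists. Take $k-2=4$ middle indices of distinct ranks $s_1<s_2<s_3<s_4$ ordered as a diamond: $m_1<m_2$, $m_1<m_3$, $m_2<m_4$, $m_3<m_4$, with $m_2$ and $m_3$ incomparable. This order is compatible with the rank order, and adjoining a fresh bottom and top produces a lattice ($m_2\vee m_3=m_4$, $m_2\wedge m_3=m_1$), so it is admissible by your own criterion and defines a genuine inclusive involutive bisemilattice as in Definition \ref{def: IBSL inclusivi}; yet its covering graph is a $4$-cycle, hence not acyclic. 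Counting confirms the mismatch: of the $40$ partial orders contained in the $4$-chain, only the ``bowtie'' $m_1,m_2<m_3,m_4$ fails admissibility (the pair $m_1,m_2$ has two incomparable minimal upper bounds), leaving $39$ admissible shapes, whereas $a(4)=38$. So the identity your plan needs, namely ``number of admissible shapes $=a(k-2)$'', is false once $k\geq 6$; it holds only for $k-2\leq 3$, which is why the paper's eight-element example with $k=4$ checks out. You should know that the paper's own proof does no better at this point: it simply asserts that the possible posets on $k-2$ elements are counted by $a(k-2)$, which is exactly the claim your diamond contradicts. So you have correctly isolated the crux and even located where the difficulty lies, but a proposal whose self-declared core step is missing cannot stand as a proof, and that step, as formulated, cannot be carried out.
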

\begin{proof}
Observe that all the possible different (non isomorphic) inclusive involutive bisemilattices obtained by setting a different algebra in the place of the least element in the index set are counted, via Lemma \ref{lem: lemmino ausiliario} (setting $h=k-2$), by $\displaystyle \binom{n+1}{k-1} $. Finally, we have to count the possible posets with $k-2$ elements and this number is equivalent to the number $a(k-2)$ is the number of acyclic graph with $k-2$ vertices. This is counted by the following inductive formula:
\begin{equation*}
a(k-2) = \sum_{q=1}^{k-2}\binom{k-3}{q-1}q^{(q-2)}a(k-2-q),
\end{equation*}
where $q^{(q-2)}$ is the Cayley's formula, counting the number of trees over $q$ vertices (of the acyclic graph), where, by convention, we assume $a(0)=1$.
\end{proof}

Solving the general case, namely determining a way to count $N(\A_{\infty},k)$ could be delivered by ``combining'' two relevant cases: the one where the involutive bisemilattice is constructed via $k-2$ distinct subalgebras of the Booleanisation $\bool$ and the relevant one where it contains an arbitrary number of copies of the same subalgebra of $\bool$. As mentioned, the combination of the two cases gives itself raise to a hard problem which requires the application of an algorithm for counting the number of non-isomorphic (finite) lattices. Yet, an understanding of ``how many'' those algebras are (the number of injective involutive bisemilattices is greater that the inclusive ones!) does not provide a criterion to characterise when two injective involutive bisemilattices have the same Booleanisation (our original problem), but gives a hint of the difficulty of the enterprise, which we were not able to solve in the present work.

\section*{Acknowledgments}

The first author gratefully acknowledges the financial support of the PRIN project ``From models to decisions'' (Italian Ministry of Scientific Research grant n. 201743F9YE, Turin unit) and also the support of the Marie Curie fellowship within the program ``Beatriu de Pinos'', co-funded by Generalitat de Catalunya and the European Union's Horizon 2020 research and innovation programme under the MSCA grant agreement No. 801370. The second  author was  supported by PRIN 2015 -- Real and Complex Manifolds; Geometry, Topology and Harmonic Analysis -- Italy, by INdAM, GNSAGA - Gruppo Nazionale per le Strutture Algebriche, Geometriche e le loro Applicazioni, and by KASBA - Funded by Regione Autonoma della Sardegna. We finally thank Roberto Giuntini, Francesco Paoli, Tommaso Flaminio and two anonymous reviewers for their fruitful comments on a previous version of the paper.


\end{document}